\newcommand{\bX}{\mathbf{X}}
\newcommand{\bH}{\mathbf{H}}
\newcommand{\bh}{\mathbf{h}}
\newcommand{\R}{{\mathbb{R}}}
\newcommand{\Z}{{\mathbb Z}}
\newcommand{\N}{{\mathbb N}}
\newcommand{\FF}{{\mathcal F}}
\newcommand{\HH}{{\mathcal H}}
\newcommand{\RR}{{\mathcal R}}
\renewcommand{\SS}{{\mathcal S}}
\newcommand{\tri}{|\!|\!|}
\newcommand{\Tr}{\operatorname{Tr}}
\newcommand{\tr}{\operatorname{tr}}
\renewcommand{\Re}{\mathop{\rm Re}\nolimits}
\renewcommand{\Im}{\mathop{\rm Im}\nolimits}
\DeclareMathOperator{\Ric}{Ric}
\theoremstyle{plain}
\newtheorem{thm}{Theorem}[section]
\newtheorem{prop}[thm]{Proposition}
\newtheorem{cor}[thm]{Corollary}
\newtheorem{lemma}[thm]{Lemma}
\theoremstyle{definition}
\newtheorem{rem}{Remark}[thm]
\newtheorem{defn}[thm]{Definition}
\numberwithin{equation}{section}
\newcommand{\thmref}[1]{Theorem~\ref{#1}}
\newcommand{\secref}[1]{Section~\ref{#1}}
\newcommand{\propref}[1]{Proposition~\ref{#1}}
\def\squarebox#1{\hbox to #1{\hfill\vbox to #1{\vfill}}}
\newcommand{\<}{\langle}
\renewcommand{\>}{\rangle}
\renewcommand{\d}{\partial}
\newcommand{\ep}{\epsilon}
\newcommand{\lV}{\lVert}
\newcommand{\rV}{\rVert}
\def\be{{\beta}}
\def\ga{\gamma}
\def\Ga{\Gamma}
\def\de{\delta}
\def\De{\Delta}
\def\ep{\epsilon}
\def\la{\lambda}
\def\si{\sigma}
\def\Si{\Sigma}
\def\nab{\nabla}
\def\al{\alpha}
\title[Skew Mean Curvature Flow]
{Global regularity of Skew mean curvature flow for small data in $d\geq 4$ dimensions}
\author[J. Huang]
{Jiaxi Huang}
\author[Z. Li]
{Ze Li}
\author[D. Tataru]
{Daniel Tataru}
\address{School of Mathematics and Statistics, Beijing Institute of Technology, 
\newline\indent
Beijing
100081, P.R. China}
\email{jiaxih@bit.edu.cn}
\address{\newline\indent
School of Mathematics and Statistics, Ningbo University,
\newline\indent
Ningbo, 315211, P.R. China
}
\email{rikudosennin@163.com}
\address{\newline\indent
Department of Mathematics, University of California at Berkeley,
\newline\indent
Berkeley, CA 94720, USA
}
\email{tataru@math.berkeley.edu}
\subjclass[2010]{Primary: 35Q55; Secondary: 53E10.}
\keywords{Skew mean curvature flow, global regularity, low regularity, small data}
\begin{document}

\begin{abstract}
The skew mean curvature flow is an evolution equation for a $d$ dimensional manifold immersed into $\mathbb{R}^{d+2}$, and which moves along the binormal direction with a speed 
proportional to its mean curvature.

In this article, we prove small data global regularity in low-regularity Sobolev spaces for the skew mean curvature flow in dimensions $d\geq 4$. This extends the local 
well-posedness result in \cite{HT}.

\end{abstract}

\date{\today}
\maketitle


\section{Introduction}

The skew mean curvature flow (SMCF) evolves a codimension 2 submanifold along its binormal direction with a speed given by its
mean curvature. Precisely speaking, assume that $\Sigma$  is a  $d$-dimensional oriented manifold and $(\mathcal{M},g_{\mathcal M})$ is an  $(d+2)$-dimensional oriented Riemannian manifold, then
SMCF is  a family of time-dependent immersions $F:\mathbb{I}\times \Sigma \to \mathcal{M}$  satisfying
\begin{align}            \label{1}
\left\{
  \begin{aligned}
     &\ {\partial_t}F= J(F)\mathbf{H}(F), \qquad (t,x)\in \mathbb I\times \Sigma, \hbox{ } \\
    &\ F(0,x)=F_0(x),  \hbox{}
  \end{aligned}
\right.
\end{align}
where, for each given $t\in {\mathbb I}$, $\mathbf{H}(F)$ denotes the mean curvature vector
of the submanifold $\Sigma_t:= F(t,\Sigma)$. Here $J(F)$, which denotes the  natural induced complex structure for the normal bundle $N\Sigma_t$,  can be simply defined as rotating a vector in the normal space by $\frac{\pi}{2}$ positively (notice that $N\Sigma_t$ is of rank 2).
An alternative formulation  of SMCF is
\begin{equation}           \label{Main-Sys}
\left\{
  \begin{aligned}
&\ (\partial_t F)^{\perp}=J(F)\mathbf{H}(F),\\
&\ F(0,\cdot)=F_0.
 \end{aligned}
\right.
\end{equation}
Here, for an arbitrary vector $Z\in T\mathcal{M}$ at $F$,    $Z^\perp$
denotes its orthogonal projection onto $N\Sigma_t$. Note that (\ref{1}) differs from (\ref{Main-Sys}) by a time dependent diffeomorphism of $\Sigma_t$. Hence, (\ref{1}) and  (\ref{Main-Sys}) are topologically equivalent, 
but \eqref{Main-Sys} has a larger gauge group consisting 
of all space-time changes of coordinates.

For $d=1$, the 1-dimensional SMCF in  $\mathbb R^3$ is the vortex filament equation
$
\partial_t v= \partial_sv\times \partial^2_sv
$
for $v:(s,t)\in \mathbb R\times \mathbb R\longmapsto v(s,t)\in \mathbb R^3$, where $t$ denotes time, $s$ denotes the arc-length parameter of the curve $v(t,\cdot)$,  and $\times$ denotes
the cross product in $\mathbb R^3$. The  vortex filament equation describes  the free motion
of a vortex filament, see Da Rios \cite{R}, Hasimoto \cite{Ha}.
For $d\ge 2$, the (SMCF) was deduced by both physicists and  mathematicians. The physical  motivations  are   the localized
induction approximation (LIA) of high dimensional  Euler equations and  asymptotic dynamics of vortices in
superconductivity and superfluidity, see Lin \cite{Lin}, Jerrard \cite{J}, Shashikanth \cite{S}, Khesin \cite{K}.
SMCF  also appears in various mathematical problems, especially  the Hamiltonian flow associated with Marsden-Weinstein sympletic structure \cite{MW}, nonlinear Grassmannian manifolds discussed by Haller-Vizman \cite{HaVi04}, and the star mean curvature flow introduced by Terng \cite{T}.
Moreover, it is remarkable that SMCF has a deep relationship with the Schr\"odinger map flow (e.g. \cite{UT}), in fact,
\cite{S1} proved that the Gauss map of a $d$-dimensional SMCF in $\mathbb R^{d+2}$ satisfies a Schr\"odinger map flow equation.

Let us briefly recall some earlier works on SMCF.
The $1$-d case is special, in that the problem has a semilinear, rather than quasilinear structure, and is essentially equivalent to the $1$-d cubic NLS problem.
For more details  we refer the reader to the survey article of Vega~\cite{Ve14}.

The early work of Gomez \cite{Gomez} proposed a way to write SMCF as a quasilinear Schr\"odinger equation system by introducing a complex valued  scalar mean curvature and choosing some gauge for the normal boundle. The model (\ref{1}) was studied by Song-Sun \cite{SS}, who considered the local Cauchy problem   in dimension $d=2$
proved local existence of SMCF for the for $F:\Sigma \to \mathbb R^{4}$ with a compact oriented surface $\Sigma$. This was generalized by Song \cite{S2} to $F:\Sigma \to \mathbb R^{d+2}$ with a compact oriented manifold $\Sigma$ for all $d\ge 2$, and \cite{S2} proved existence and uniqueness of smooth solutions for an  arbitrary oriented manifold $\Sigma$. However, as noted also in \cite{S2}, when attempting to study the SMCF in Sobolev spaces using the formulation in  \eqref{1} there is a one derivative loss, which indicates that this might not be the best way to choose the space-time coordinates. 

The above issue was clarified in   Huang-Tataru \cite{HT,HT22}, who proposed an alternative approach, namely to start  with the formulation 
of SMCF in  (\ref{Main-Sys}), and then to choose a 
favourable space-time gauge (i.e. coordinates).
In this gauge there is no more loss of derivatives,
and  they were able to prove a full local well-posedness result in low regularity Sobolev spaces for initial  data which are small perturbations of flat metrics.
Precisely, the solutions obtained in \cite{HT,HT22} are 
at regularity $H^s$, with $s > \dfrac{d}{2}$,
measured at the curvature level; this is one derivative above scaling.
The gauge formulation of the SMCF flow 
in \cite{HT,HT22} closely resembles a quasilinear 
Schr\"odinger equation, coupled with several elliptic/parabolic equations. 
For the local well-posedness theory of general  quasilinear Schr\"odinger equations, see the pioneer works of Kenig-Ponce-Vega \cite{KPV2,KPV3,KPV} for localized initial data, as well as Marzuola-Metcalfe-Tataru~\cite{MMT1,MMT1,MMT3} for data in translation invariant $H^s$ based spaces.

The small data global regularity  problem for SMCF  in the  formulation (\ref{1}) was  considered 
in \cite{Li1}  which proved that Euclidean planes are stable under SMCF for small transversal perturbations in some $W^{2,q}\cap H^k$ space  with some $q\in(1,2)$.
In the later work \cite{Li2},   the $W^{2,q}$ smallness and transversal assumption of \cite{Li1} were removed in $d\ge 3$, and it proved the global in time time existence and  of scattering of small data solutions and the existence of wave operators.

\subsection{The main result}

Our objective in this paper is to establish the global 
in time well-posedness and scattering for solutions to 
SMCF in the formulation (\ref {Main-Sys}) for small initial data.  

Our main dynamic variable will be the \emph{complex mean curvature} $\psi$ for our system, which is defined in the next section, see \eqref{csmc}, and stands for the representation of the scalar mean curvature
relative to an orthonormal frame in $N\Sigma$ determined by our choice of gauge. The similar representation of the 
full second fundamental form will be denoted by $\lambda$,
and the two are related by $\psi = \Tr \lambda$.

 To measure the Sobolev regularity of $\psi$ for our  global solutions  we introduce the index $s_d$ so that 
 \begin{align}     \label{Reg-index}
     s_d\geq 3,\quad \text{if } d=4;\qquad s_d>\frac{d+1}{2}+\frac{1}{2(d-1)},\quad \text{if } d\geq 5,
 \end{align}
To measure the averaged (Strichartz) decay of the
solutions in time we will use the exponent $r_d$ defined by 
\begin{align}     \label{r_d}
     r_d=\frac{2d(d-1)}{(d-2)^2},\quad \text{for } d\geq 4.
\end{align}
Then we define the Strichartz norms $S[0,T]$ as 
\begin{align}\label{S4}
	\|\psi\|_{S[0,T]}:=\|\psi\|_{L^2(0,T;W^{1,4})}+\|\psi\|_{L^2(0,T; W^{s_d-2,r_d})},\quad  \text{for }d=4,
\end{align}
and 
\begin{align} \label{S5}
	\|\psi\|_{S[0,T]}:=\|\psi\|_{L^2(0,T; W^{s_d-2,r_d})},\quad  \text{for }d\geq 5.
\end{align}

At this point, we content ourselves with a less precise formulation of the main result, relative to the harmonic/Coulomb gauge which was introduced in \cite{HT} and is discussed in Section~\ref{gauge}:
\begin{thm}[Small data global regularity and scattering]   \label{GWP-thm}
Let $s_d$ and $r_d$ be as \eqref{Reg-index}, \eqref{r_d} respectively for $d\geq 4$. Then there exists $\ep_0>0$ sufficiently small such that, for all initial data $\Sigma_0$ with metric and mean curvature satisfying
 \begin{align*}
\|\d_x(g_0-I)\|_{H^{s_d}}+\|{\bf H}_0\|_{H^{s_d}} \leq \ep_0,
 \end{align*}
 the skew mean curvature flow \eqref{Main-Sys} for maps from $\R^d$ to the Euclidean space $(\R^{d+2},g_{\R^{d+2}})$ is globally well-posed in the harmonic/Coulomb gauge. 
 
 Moreover, in the harmonic/Coulomb gauge, the metric and complex mean curvature satisfy the bounds 
 \begin{equation}\label{psi-full-reg}
	\|\d_x (g-I_d)\|_{C_t H^{s_d+1}_x}+\lV \psi\rV_{S(\R)}+\lV \psi\rV_{C _tH^{s_d}_x} \lesssim \lV \psi_0\rV_{H^{s_d}_x}.
	\end{equation}
    In addition, there exists $\psi_{\pm}\in H^{s_d-2} $ such that
    \begin{equation}   \label{Scatter}
    \lim_{t\rightarrow\pm\infty} \|\psi-e^{it\De}\psi_{\pm}\|_{H^{s_d-2}_x}=0.
    \end{equation}
\end{thm}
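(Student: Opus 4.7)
The plan is to run a bootstrap argument directly on the Schrödinger system for the complex mean curvature $\psi$ that arises after imposing the harmonic/Coulomb gauge of \cite{HT,HT22}. In that gauge, SMCF reduces to a quasilinear magnetic Schrödinger equation of schematic form
\[
(i\partial_t + \Delta_g)\psi = \mathcal{N}(\psi, \lambda, A, \partial(g-I_d)),
\]
coupled to elliptic equations for the connection one-form $A$, a div--curl type elliptic system for the metric perturbation $g - I_d$, and frame compatibility relations. The nonlinearity $\mathcal{N}$ is at least quadratic in $(\psi,\lambda,A,\partial(g-I_d))$, with the magnetic term $A \cdot \nabla\psi$ the most dangerous contribution; its apparent derivative loss is compensated by the Coulomb relation $\operatorname{div} A = \text{quadratic in }\psi$.

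First I would establish the auxiliary elliptic bounds: using gauge ellipticity and the curvature identities relating $\lambda$ to $\psi$, one obtains fixed-time estimates
\[
\|\partial(g - I_d)\|_{H^{s_d+1}} + \|A\|_{H^{s_d}} + \|\lambda\|_{H^{s_d-1}}
\lesssim \|\psi\|_{H^{s_d}}^2 + (\text{higher order}),
\]
which effectively reduce the full gauged system to a self-contained equation for $\psi$. The local theory of \cite{HT,HT22} already supplies these, together with persistence of $H^{s_d}$ regularity on any interval where $\psi$ is controlled in $S[0,T]$, so the task becomes one of proving a uniform-in-$T$ Strichartz bound.

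The central step is the Strichartz theory for the variable-coefficient magnetic Schrödinger operator $i\partial_t + \Delta_g - 2i A \cdot \nabla$. Since the coefficients are quadratically small in $\psi$, these can be built by paradifferential and wave-packet techniques in the spirit of Marzuola--Metcalfe--Tataru \cite{MMT1,MMT3}, yielding
\[
\|\psi\|_{S[0,T]} \lesssim \|\psi(0)\|_{H^{s_d}} + \|\mathcal{N}\|_{N[0,T]}
\]
for a suitable dual Strichartz norm $N[0,T]$. The admissible pair $(2,r_d)$ with $r_d = 2d(d-1)/(d-2)^2$ is tuned so that $\nabla\psi$ sits at nearly $L^2_t L^\infty_x$ decay, just enough to absorb the magnetic nonlinearity after Hölder in time; in $d = 4$ this barely fails at the endpoint, which is why \eqref{S4} carries the extra $L^2_t W^{1,4}_x$ component. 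The bootstrap is then closed by placing, in each quadratic contribution to $\mathcal{N}$, one factor in the Strichartz piece of $S[0,T]$ and the other in $L^\infty_t H^{s_d}_x$, with Sobolev embedding and Hölder; cubic and higher terms are strictly perturbative. Combined with the auxiliary bounds and energy-type persistence this gives \eqref{psi-full-reg}, and the scattering statement \eqref{Scatter} follows by the usual Duhamel argument: once $\psi \in S(\R)$, the integrand $e^{-it\Delta}\mathcal{N}$ is absolutely convergent in $H^{s_d-2}$, so $e^{-it\Delta}\psi(t)$ is Cauchy as $t\to\pm\infty$.

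The main obstacle is the combination of borderline dispersion in $d = 4$ with the quasilinear, non-self-adjoint magnetic structure of the equation. Proving uniform Strichartz estimates for $i\partial_t + \Delta_g - 2iA\cdot\nabla$ with coefficients of only $H^{s_d}$ regularity, while recovering the derivative in the $A\cdot\nabla\psi$ term without loss, is the hard part: it relies on the structural cancellations furnished by the Coulomb condition, together with frequency-envelope bookkeeping refined enough to keep the metric-Laplacian corrections strictly perturbative. The auxiliary $L^2_t W^{1,4}_x$ norm and the threshold $s_d \geq 3$ in dimension four are dictated precisely by this endpoint issue.
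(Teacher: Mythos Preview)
Your proposal diverges from the paper's argument in two structural ways, and the first of these is a genuine gap.

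\textbf{Global Strichartz for the variable-coefficient operator.} You propose to prove Strichartz estimates directly for $i\partial_t + \Delta_g - 2iA\cdot\nabla$ via paradifferential and wave-packet methods in the style of \cite{MMT1,MMT3}. Those techniques are local-in-time: they rely on local smoothing and produce bounds on unit intervals, which one then concatenates. That concatenation loses global control unless the coefficients themselves decay in time, and you have not explained where that decay comes from. The paper avoids this entirely: it applies Duhamel relative to the \emph{flat} operator $i\partial_t+\Delta$, so that the quasilinear term $h\,\partial^2\psi$ sits in the source. What makes this close globally is the combination of two ingredients you omit: (i) an $L^2_t$-type bound on $h$, namely $\|h\|_{L^2_T W^{\sigma_1,2(d-1)}}\lesssim \|\psi\|_{S[0,T]}$, obtained from the elliptic system plus the bootstrap hypothesis; and (ii) the \emph{inhomogeneous} Strichartz estimates with non-admissible pairs $(2,r_d)$ and $(2,\tfrac{2(d-1)}{d-2})$ due to Foschi--Vilela--Koh--Seo, which allow the source to be placed in $L^2_T W^{\sigma_d,\tilde r'}$. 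This pairing is exactly what absorbs $h\,\partial^2\psi$ without derivative loss. Neither the decay bound on $h$ nor the non-admissible inhomogeneous estimates appear in your sketch, and without them the quasilinear term cannot be treated perturbatively on a global time interval.

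\textbf{The energy component and fractional regularity.} The paper does not recover the $L^\infty_t H^{s_d}$ bound from Strichartz alone; the Strichartz estimate loses roughly two derivatives. A separate energy argument is required: covariant energies $E^k(\psi)=\|\psi\|_{\mathsf H^k}^2$ are shown to satisfy $\tfrac{d}{dt}E^k\lesssim \|\lambda\|_{L^\infty}^2\|\lambda\|_{\mathsf H^k}^2$, and coercivity $E^k\approx\|\psi\|_{H^k}^2$ is established in the harmonic/Coulomb gauge. This handles integer $k$; for the fractional index $s_d$ the paper proves an $H^{-1}$ energy estimate for the \emph{linearized} equation, then runs a frequency-envelope interpolation between $H^{-1}$ difference bounds and $H^N$ high-regularity bounds on a regularized family $\psi^{(k)}$. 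Your reference to ``energy-type persistence'' does not capture this structure. Also, your elliptic bounds are misstated: $\lambda$ is controlled linearly by $\psi$ at the same regularity, $\|\lambda\|_{H^{s_d}}\lesssim\|\psi\|_{H^{s_d}}$, not quadratically one derivative lower; and the dangerous term is the quasilinear $h\,\partial^2\psi$, not $A\cdot\nabla\psi$.
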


\begin{rem}
The gauge choice used for the above result is 
the \emph{harmonic/Coulomb gauge}, following \cite{HT}.
Here harmonic refers to the choice of coordinates on 
$\Sigma$ at fixed time, and Coulomb applies to the 
choice of the orthonormal frame on $N \Sigma$. In this gauge, the surface $\Sigma$ is uniquely determined up to symmetries by the complex mean curvature $\psi$ at fixed time in an elliptic fashion.  By contrast, in \cite{HT22} the harmonic/Coulomb gauge is only imposed at the initial time, while a heat gauge is used forward in time.

\end{rem}

\begin{rem}\label{Cho-Reg}
One may compare the Sobolev index $s_d$ in the theorem 
with the weaker restriction $s > d/2$ in \cite{HT,HT22}.
Here the choice of regularity $s_d$ is more restricted 
due to the need to also control decay via global in time Strichartz norms. Precisely, our main control norm for 
the energy estimates will essentially be $\|\lambda\|_{L^2_t L^\infty_x}$, 
see \eqref{E-psi} below. To bound this by $\|\psi\|_{L^2_TW^{s_d-2,r_d}}$ by elliptic estimates and Sobolev embeddings requires that  $s_d>\frac{d}{r_d}+2$. This gives the $s_d$ threshold \eqref{Reg-index} for $d\geq 5$. 

	In dimension $d=4$ we face an additional obstruction
	arising in the study of the global well-posedness for the linearized equation. For that we need  Strichartz estimates in the space $L^2W^{1,4}$, which in turn restricts the regularity to $s_d\geq 3$.
\end{rem}

The global regularity is closely related to the energy estimates and Strichartz estimates for the complex mean curvature $\psi$ for our system. Following \cite{HT}, 
in the harmonic/Coulomb gauge $\psi$ 
solves a quasilinear Schr\"odinger equation
\eqref{mdf-Shr-sys-2}-\eqref{ell-syst}.
We describe these estimates next,
beginning with the energy estimates.

A key point in the following proposition is that
we should work with the ``good" energy, which is both coercive and propagates well along the flow. At integer
Sobolev regularity indices there is a canonical, geometric  choice, given by the $L^2$ norm of covariant derivatives
of $\psi$. The challenge is then to prove coercivity,
which is no longer a covariant property but depends instead on our gauge choice.

\begin{prop}[Energy estimates in $H^k$]         \label{En-prop}
For each a nonnegative integer $k$ there exists 
an energy functional $E^k=E^k(\psi)$ defined on 
functions in $H^k$ which are also small in $H^s$ for some 
$s > d/2$, with has the following two properties:

\begin{enumerate}[label=\roman*)]
\item\ [Coercivity:] In the harmonic/Coulomb gauge we have the equivalence relation:
\begin{equation}      \label{eq-E}
    E^k(\psi) \approx_{C_1} \| \psi\|_{H^k}^2\approx \| \la\|_{H^k}^2 \approx \| \la\|_{\mathsf H^k}^2,
\end{equation}
where the constant $C_1$ only depends on the $H^s$ norm of $\psi$.

\item\ [Energy growth] 
If $\psi$ is a solution of the SMCF flow \eqref{mdf-Shr-sys-2}-\eqref{ell-syst} 
which remains small in $H^s$ then 
\begin{equation}  \label{Energy}
   \frac{d}{dt} E^k(\psi) \leq  C_E\| \lambda \|_{L^\infty}^2 \|\la\|_{\mathsf H^k}^2.
 \end{equation}  
 \end{enumerate}	
\end{prop}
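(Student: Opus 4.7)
The natural choice for $E^k$ is the geometric $L^2$ norm of the covariant derivatives of the full second fundamental form $\la$,
\[
E^k(\psi) \defeq \sum_{j=0}^{k}\int_{\Si_t}|D^j\la|_g^2\,dv_g,
\]
where $D$ denotes the total connection acting on $T^*\Si^{\otimes j}\otimes N\Si$, built from the Levi--Civita connection on $(\Si,g)$ and the Coulomb connection on the normal bundle. This is essentially the only natural choice whose principal symbol matches that of the magnetic Laplacian appearing in the Schr\"odinger equation \eqref{mdf-Shr-sys-2} for $\psi$, which is precisely what produces the leading-order cancellation in the propagation estimate.

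For part (i), the plan is to exploit the elliptic structure of the harmonic/Coulomb gauge. Because $\psi$ (hence $\la$) is small in $H^s$, the gauge equations \eqref{ell-syst} are elliptic systems with small nonlinear source, so standard elliptic regularity yields
\[
\|g-I\|_{H^{k+2}}+\|A\|_{H^{k+1}}\lesssim \|\la\|_{L^\infty}\|\la\|_{H^{k}}+\text{l.o.t.}
\]
for the metric perturbation and the Coulomb connection one-form. This turns each covariant derivative $D$ into $\pr$ plus a small coefficient times lower-order terms, so the geometric norm $\|D^j\la\|_{L^2}$ is equivalent to the flat norm $\|\pr^j\la\|_{L^2}$, giving $E^k\approx\|\la\|_{H^k}^2\approx\|\la\|_{\mathsf H^k}^2$. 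The remaining equivalence $\|\la\|_{H^k}\approx\|\psi\|_{H^k}$ uses the Codazzi equation $D_i\la_{jk}=D_j\la_{ik}$, valid because the ambient space is flat: $\la$ is then a symmetric normal-valued 2-tensor whose trace is $\psi$ and whose divergence is $D\psi$, so it solves a first-order elliptic system whose solution operator is bounded on every $H^k$ up to a $\|\la\|_{L^\infty}\|\la\|_{H^k}$ quadratic error.

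For part (ii), I would differentiate under the integral and track the four sources of time derivatives: the factor $D^j\la$, the pairing metric $g$ inside $|\cdot|_g^2$, the volume element $dv_g$, and the connection pieces implicit in $D^j$. Using \eqref{mdf-Shr-sys-2}, the contribution from $\pr_t\la$ is, to leading order, $\Re\langle iD^j\De_A\la,D^j\la\rangle_g$, which vanishes because the magnetic Laplacian is self-adjoint in the geometric $L^2$ inner product --- this is the key cancellation, and it is exactly why the geometric form of $E^k$ is used. All remaining contributions are either commutators $[D^j,\De_A]\la$, which via the Ricci identity involve the curvature (quadratic in $\la$) times derivatives of $\la$ of order at most $k$, or they come from $\pr_t g$, $\pr_t A$, $\pr_t(dv_g)$, each of which can be computed from the gauge system and written schematically as $\la\ast\la$ times derivatives of $\la$. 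Each such term is then estimated by placing one factor of $\la$ in $L^\infty$ and the rest in geometric $L^2$-based Sobolev norms, producing the quadratic bound $C_E\|\la\|_{L^\infty}^2\|\la\|_{\mathsf H^k}^2$.

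The principal difficulty, typical of quasilinear Schr\"odinger analysis, is avoiding a loss of a derivative both in $[D^j,\De_A]$ and in coefficient terms of the form $\pr_t g\cdot \pr^2(\cdots)$. Using the geometric energy built from $D$ rather than flat derivatives eliminates the loss at the top-order symbol level, since by construction the principal symbols of $D^j$ and $\De_A$ commute in the appropriate sense. The sub-principal terms either involve curvature, which is $O(\la\ast\la)$ geometrically and therefore absorbable into $\|\la\|_{L^\infty}^2\|\la\|_{\mathsf H^k}^2$ after a single integration by parts, or they involve $g-I$ and $A$ whose $H^{k+1}$ size is already controlled by the coercivity bounds of part (i). A paradifferential splitting of each bilinear term into high--low and high--high pieces finishes the estimate without any derivative loss, and the overall bound assembles into \eqref{Energy}.
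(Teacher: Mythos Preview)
Your overall strategy is sound and close to the paper's, with one cosmetic difference and one genuine gap.

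The paper takes $E^k(\psi)=\|\psi\|_{\mathsf H^k}^2$ rather than $\|\la\|_{\mathsf H^k}^2$; since the two are equivalent by the Codazzi system (as you argue in part (i)), this is harmless. Your treatment of coercivity is essentially the same as the paper's.

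The substantive issue is in part (ii). You propose to work in the harmonic/Coulomb gauge and track $\partial_t g$, $\partial_t A$, $\partial_t(dv_g)$, claiming each is ``schematically $\la\ast\la$''. This is not quite right: from \eqref{g_dt} one has $\partial_t g_{\al\be}=2\Im(\psi\bar\la_{\al\be})+\nabla_\al V_\be+\nabla_\be V_\al$, and the $\nabla V$ piece is not a pointwise product of $\la$'s but rather (via the elliptic $V$-equation in \eqref{ell-syst}) a Riesz-type singular integral of $\la\bar\psi$. In particular $\|\nabla V\|_{L^\infty}$ is not directly controlled by $\|\la\|_{L^\infty}^2$, so the contribution $\int\nabla V\cdot|D^k\la|^2\,dv_g$ does not fit into the target bound $C_E\|\la\|_{L^\infty}^2\|\la\|_{\mathsf H^k}^2$ as you assert. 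The same difficulty appears in the commutator $[D^k,V^\ga D_\ga]\la$.

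The paper sidesteps this entirely by observing that $E^k$ is a geometric (coordinate-independent) quantity, so one may compute $\frac{d}{dt}E^k$ in the gauge $V=0$, which is \emph{not} the harmonic gauge; there the volume form is preserved and no $V$-terms appear at all. After this reduction every remaining contribution is genuinely quartic in $\la$ with at most $2k$ total covariant derivatives, and the paper closes with Hamilton's covariant interpolation inequality $\|\nab^{A,i}\la\|_{L^{2k/i}}\lesssim\|\la\|_{L^\infty}^{1-i/k}\|\nab^{A,k}\la\|_{L^2}^{i/k}$ rather than a paradifferential decomposition. The latter would in any case be awkward to apply to covariant derivatives; the geometric interpolation is the natural tool here.

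Your approach can be salvaged either by invoking the same gauge-invariance trick, or by directly checking that all $V$-dependent contributions to $\frac{d}{dt}E^k$ cancel (which they must, precisely because $E^k$ is coordinate independent). But as written, this step is missing.
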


As a consequence of \eqref{Energy}, by \eqref{eq-E} and Gronwall's inequality we obtain
\begin{align}     \label{E-psi}
	\| \psi(t)\|_{H^{k}}^2\leq  C_1^2  e^{\int_0^T C_E\|\la\|_{L^\infty}^2 ds}\|\psi_0\|_{H^k}^2.
\end{align}
This justifies the need to control the norm 
$\|\la\|_{L^2 L^\infty}$ for our global solutions.

\begin{rem}
    The energy estimate \eqref{Energy} holds without any gauge assumptions, and was proved first in \cite[Lemma 4.9]{SS}. Here we use a different method to prove this estimate, using only the Schr\"odinger equation for $\psi$ and the associated constraints to gain the estimates. The gauge choice is, however, essential for the coercivity part.
\end{rem}

\begin{rem}
     The energies are constructed in an explicit fashion only for integer $k$. Nevertheless, as a consequence in our analysis in the last section of the paper, it follows that
     bounds of the form \eqref{E-psi} hold also for all noninteger $k>0$. However, we do this using
     a mechanism which is akin to a paradifferential expansion, without constructing an explicit
     energy functional as provided by the above theorem in the integer case.
\end{rem}

We now turn our attention to the Strichartz estimates for $\psi$. Since our problem is quasilinear, here we a-priori assume that $\psi$ remains small in $H^{s_d}$, and 
we also loose some derivatives.

\begin{prop}[Strichartz estimates]   \label{impbd-1}
	Let $s_d$ be as \eqref{Reg-index} and $\si_d=s_d-2$ for $d\geq 4$. Assume that $\psi$ is a solution of \eqref{mdf-Shr-sys-2}-\eqref{ell-syst} on some interval $[0,T]$ for $T>1$,  which satisfies the smallness condition
	\begin{equation*} 
	\lV \psi\rV_{L^\infty H^{s_d}}\leq C_0\ep_0,
    \end{equation*}
Then $\psi$ satisfies  the Strichartz bound
\begin{equation}  \label{psi-L2t}
	\|\psi(t)\|_{S[0,T]}\leq C_2(\|\psi_0\|_{H^{\si_d+\frac{d-2}{2(d-1)}}}+(C_0\ep_0)^2\|\psi(t)\|_{S[0,T]}).
\end{equation}
\end{prop}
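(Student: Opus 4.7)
\textbf{Proof plan for Proposition \ref{impbd-1}.} My starting point is the paradifferential formulation of the Schrödinger equation \eqref{mdf-Shr-sys-2}--\eqref{ell-syst} satisfied by $\psi$, which schematically has the form $(i\d_t + \d_j g^{jk}\d_k)\psi = N(\psi, A, g)$, with the metric $g$ and the gauge connection $A$ determined from $\psi$ through the elliptic system \eqref{ell-syst}. A first step is to dyadically localize: for each frequency scale $\lambda = 2^k$, the piece $\psi_\lambda := P_\lambda \psi$ solves a variable coefficient Schrödinger equation $(i\d_t + \d_j g^{jk}_{<\lambda/8} \d_k)\psi_\lambda = F_\lambda$, where the coefficients are frequency-truncated and $F_\lambda$ contains paradifferential commutator errors plus the localized nonlinearity $P_\lambda N$. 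Because $s_d > d/2 + 1$, the elliptic estimates for \eqref{ell-syst} give that $g-I$ is controlled in a space well above the scaling-critical threshold, so the truncated coefficients are smooth and small uniformly in $\lambda$.

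The core of the argument is to establish the sharp-endpoint Strichartz bound
\[
\|\psi_\lambda\|_{L^2_t L^{\frac{2d}{d-2}}_x} \lesssim \|P_\lambda \psi_0\|_{L^2_x} + \|F_\lambda\|_{L^2_t L^{\frac{2d}{d+2}}_x + L^1_t L^2_x}
\]
for the variable coefficient Schrödinger flow at frequency $\lambda$. I would adapt the FBI/wave packet parametrix of Marzuola--Metcalfe--Tataru \cite{MMT1,MMT3} for quasilinear Schrödinger operators with low-regularity coefficients: concentrate on a time interval of length $\sim 1$ (global, rescaled if needed), build the parametrix by integrating along the Hamilton flow of the symbol $g^{jk}\xi_j \xi_k$, and verify that the bicharacteristics remain a perturbation of straight lines on the relevant time scales thanks to the smallness of $g-I$. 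The parametrix yields dispersive decay for wave packets, hence the endpoint Strichartz estimate above, with constants uniform in $\lambda$.

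Sobolev embedding $L^{\frac{2d}{d-2}} \hookrightarrow L^{r_d}$ costs exactly $\frac{d-2}{2(d-1)}$ derivatives, which accounts for the shift from $H^{s_d}$ to $H^{\sigma_d + \frac{d-2}{2(d-1)}}$ appearing on the right-hand side of \eqref{psi-L2t}. Summing the localized estimates dyadically with $H^{\sigma_d + \frac{d-2}{2(d-1)}}$ weights (together with $W^{1,4}$ at the $L^2 W^{1,4}$ scale when $d=4$, which in four dimensions is itself the sharp endpoint so no further embedding loss is possible), one arrives at
\[
\|\psi\|_{S[0,T]} \lesssim \|\psi_0\|_{H^{\sigma_d+\frac{d-2}{2(d-1)}}} + \|N(\psi,A,g)\|_{Y} + \text{(paradifferential errors)},
\]
with $Y$ the natural inhomogeneous Strichartz dual. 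The nonlinear term and the paradifferential commutator errors are then estimated by bilinear/trilinear paraproduct bounds: each error term is at least quadratic in $\psi$ and can be organized as a product of an $L^\infty H^{s_d}$ factor (controlled by $C_0\ep_0$ via the a priori hypothesis) and an $S[0,T]$ factor, producing the $(C_0\ep_0)^2 \|\psi\|_{S[0,T]}$ contribution of \eqref{psi-L2t}. The auxiliary gauge and metric quantities appearing through the nonlinearity are traded for $\psi$ using the elliptic bounds for \eqref{ell-syst}.

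The principal obstacle is the parametrix construction for the variable-coefficient Schrödinger operator at rough coefficient regularity, particularly in the critical case $d=4$ where the $L^2 W^{1,4}$ norm is the sharp Strichartz endpoint and offers no margin for Sobolev embedding, forcing one to work at the true endpoint exponent and to control the wave packet propagation precisely enough over intervals of all scales. A secondary technical point is the careful bookkeeping of paradifferential remainders under the elliptic gauge, to verify that the algebraic cancellations in $N(\psi,A,g)$ indeed render the right-hand side at least quadratic small, so that the linear-in-$\|\psi\|_{S[0,T]}$ terms close perturbatively in the bootstrap \eqref{psi-L2t}.
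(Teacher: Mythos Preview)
Your proposal takes a genuinely different and substantially heavier route than the paper. The paper does \emph{not} build a parametrix for the variable coefficient operator at all. Instead it writes the equation as
\[
(i\d_t+\Delta)\psi = \mathcal N := h\,\d_x^2\psi + (V+A)\d_x\psi + (B+A^2+VA+\lambda^2)\psi,
\]
applies Duhamel for the \emph{flat} Schr\"odinger flow, and uses the inhomogeneous Strichartz estimates with \emph{non-admissible} pairs (Foschi, Koh, Koh--Seo): namely $(q,r)=(2,r_d)$ on the solution side and $(\tilde q,\tilde r)=(2,\tfrac{2(d-1)}{d-2})$ on the source side. The point is that $\tilde r'=\tfrac{2(d-1)}{d}$ factors as $L^{2(d-1)}\cdot L^2$, so the top-order term $h\,\d_x^2\psi$ is handled by $\|h\|_{L^2_T L^{2(d-1)}}\|\d_x^2\psi\|_{L^\infty_T L^2}$; the first factor is bounded via the elliptic estimate \eqref{L2t-SS} (which gives $\|h\|_{L^2_T W^{\sigma_1,2(d-1)}}\lesssim\|\psi\|_{S[0,T]}$), and the second by the bootstrap hypothesis. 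For $d=4$ the same scheme runs with $(q,r)=(\tilde q,\tilde r)=(2,4)$ and the elliptic bound \eqref{L2t-hW14}. The $\tfrac{d-2}{2(d-1)}$ derivative loss is precisely the Sobolev embedding cost from the Keel--Tao endpoint $L^{2d/(d-2)}$ to $L^{r_d}$ on the linear piece, exactly as you identified.

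Your wave-packet/FBI parametrix plan is in principle a valid way to get Strichartz for the variable coefficient flow, but it is overkill here and carries a real risk: the MMT parametrix is fundamentally a \emph{local-in-time} construction (coupled with local smoothing), and your remark ``concentrate on a time interval of length $\sim 1$ (global, rescaled if needed)'' does not explain how you avoid a $T$-dependent loss when summing over unit intervals. The paper's approach sidesteps this entirely because the flat inhomogeneous Strichartz estimates are already global. If you want to salvage your route you would need to explain why the bicharacteristic geometry remains globally close to flat (it does, since $\|\d_x h\|_{H^{s_d+1}}$ is small uniformly in $t$), but even then you are reproving something the paper gets for free from \eqref{Strichartz}. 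The elliptic Lemma~\ref{Lt2-lem}, not a parametrix, is the engine that makes the top-order term perturbative.
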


A starting point for this result is provided by the endpoint Strichartz estimates of Keel-Tao \cite{KT}.
However, in addition we also use the larger class
of inhomogeneous Strichartz estimates developed \cite{F05, Vilela, Koh, KohSeo}. The latter play a key role in lowering the regularity assumptions for the initial data in our theorem.

\subsection{An outline of the proof} 
There are several key steps in the proof of our main result:

\medskip
\emph{1. The gauge choice.}
The formulation (\ref {Main-Sys}) has a key additional gauge freedom compared with the equation (\ref{1}). Indeed, (\ref {Main-Sys})  is invariant under any time dependent diffeomorphism in $\Sigma_t$, while  (\ref{1}) is only invariant under time independent  diffeomorphisms in $\Sigma_t$. This additional freedom enabled us  \cite{HT} to use the harmonic coordinate system.  This is then combined with the the Coulomb gauge for the orthonormal frame on the normal bundle. This reformulation of the equation \eqref{Main-Sys} is reviewed in Section~\ref{gauge}, where
we rewrite it as a nonlinear Schr\"odinger equation for a single independent
variable. This independent variable, denoted by $\psi$, represents the trace of the second fundamental form on $\Sigma_t$, in complex notation. In addition to the independent variables, we will use several dependent variables, as follows:
\begin{itemize}
    \item The Riemannian metric $g$ on $\Sigma_t$.
    \item The (complex) second fundamental form $\lambda$ for $\Sigma_t$.
    \item The magnetic potential $A$, associated to the natural connection on the
    normal bundle $N \Sigma_t$, and the corresponding temporal component $B$.
    \item The advection vector field $V$, associated to the time dependence of our choice of coordinates.
\end{itemize}
These additional variables will be viewed as uniquely determined by our independent variable $\psi$, provided that a suitable gauge choice was made; in our case this gauge is the combined harmonic/Coulomb gauge.
Thus  (\ref {Main-Sys}) reduces to  
\begin{enumerate}
	\item[(a)] A nonlinear Schr\"odinger equation for $\psi$, see \eqref{mdf-Shr-sys-2};
	\item[(b)] An elliptic fixed time system \eqref{ell-syst} for the dependent variables
	$\SS=(g,\lambda,V,A,B)$, together with suitable compatibility conditions (constraints).
\end{enumerate}
At the conclusion of Section~\ref{gauge} we provide a gauge version
of our main result, see Theorem~\ref{GWP-MSS-thm}.

\medskip

\emph{2. Elliptic estimates.} In Section~\ref{Ellip-sec}, we then consider the space-time bounds for the elliptic system \eqref{ell-syst} and the associated linearized equations.  Such bounds have already been proved in \cite{HT} at the level of the $H^s$ spaces. But here we also need similar bounds at the level of the Strichartz norms, which capture the time decay  of $\la$ and $\SS$ in terms  of the corresponding decay bounds for $\psi$. 
Another novelty here is that we also prove elliptic bounds for the
linearized system with $\psi_{lin}$ in $H^{-1}$; this is in contrast to 
\cite{HT}, where only nonnegative Sobolev norms were used.
\medskip

\emph{3. Energy estimates.}
In \secref{Sec-Energy}, we turn our attention to the energy estimates in \propref{En-prop}. Here we use the intrinsic Sobolev spaces $\mathsf H^k$ to define energy functional, and give the related energy estimates. We also prove an energy estimate for the linearized Schr\"odinger equation, which will be  needed 
in particular in order to transfer energy bounds from integer to fractional Sobolev spaces. 

\medskip

\emph{4. Strichartz estimates.}
The Strichartz estimates for $\psi$  are proved in \secref{Sec-Strichartz} 
using the Schr\"odinger system \eqref{mdf-Shr-sys-2}. Since this is a quasilinear 
problem, we cannot directly work with the linear variable coefficient system. 
Instead, we prove Proposition~\ref{impbd-1} using a bootstrap argument which is based  on the Strichartz estimates for the flat Schr\"odinger evolution, namely  Keel-Tao's endpoint Strichartz estimates and inhomogeneous Strichartz estimates, see \cite{F05,Koh,KohSeo}.

\medskip

\emph{ 5. The final bootstrap.} In the last section of the paper, we gain the $H^s$ solutions as a limit of solutions in higher order Sobolev spaces. Using the energy estimates in $\mathsf H^N$ and the energy estimates of linearized equation, we prove the improved energy bounds for $\psi$ in fractional Sobolev spaces. This in turn allows us to close the high level bootstrap loop for both the energy estimates and the Strichartz estimates, as stated in  \propref{bootstrap-prop}.
As a byproduct, we also obtain the scattering result Schr\"odinger equation for $\psi$ in the weaker Sobolev norms $H^{s_d-2}$.

\bigskip

\section{The differentiated equations and the gauge choice}
\label{gauge}

The goal of this section is to introduce the main independent variable $\psi$, which represents
the trace of the second fundamental form in complex notation, as well as the
following auxiliary variables: the metric $g$, the second fundamental form $\lambda$,
the connection coefficients $A,B$ for the  normal bundle as well as the advection vector field $V$.
For $\psi$ we start with \eqref{Main-Sys} and derive a nonlinear Sch\"odinger type system \eqref{mdf-Shr-sys-2}, with coefficients depending on  $(\la,\SS)$ where $\SS=(h,V,A,B)$ and $h=g-I_d$. Under suitable gauge conditions, the auxiliary variables $(\la,\SS)$ are  shown to satisfy an elliptic system \eqref{ell-syst}, as well as a natural set of constraints. We conclude the section with a gauge formulation of our main result, see Theorem~\ref{GWP-MSS-thm}. For the detailed derivation, we refer to section 2 in \cite{HT}.

\subsection{The Riemannian metric \texorpdfstring{$g$}{} and the second fundamental form.} Let $(\Sigma^d,g)$ be a $d$-dimensional oriented manifold and let $(\R^{d+2},g_{\R^{d+2}})$ be $(d+2)$-dimensional Euclidean space. Let $\al,\be,\ga,\cdots \in\{1,2,\cdots,d\}$. Considering the immersion $F:\Sigma\rightarrow (\R^{d+2},g_{\R^{d+2}})$, we obtain the induced metric $g$ in $\Sigma$,
\begin{equation}       \label{g_metric}
	g_{\al\be}=\d_{x_{\al}} F\cdot \d_{x_{\be}} F.
\end{equation}
We denote the inverse of the matrix $g_{\al\be}$ by $g^{\al\be}$, i.e.
\begin{equation*}
g^{\al\be}:=(g_{\al\be})^{-1},\quad g_{\al\ga}g^{\ga\be}=\delta_{\al}^{\be}.
\end{equation*}

Let $\nab$ be the canonical Levi-Civita connection in $\Si$ associated with the induced metric $g$.
A direct computation shows that on the Riemannian manifold $(\Si,g)$ we have the Christoffel symbols
\begin{align*}
	\Gamma^{\ga}_{\al\be}=\ \frac{1}{2}g^{\ga\si}(\d_{\be}g_{\al\si}+\d_{\al}g_{\be\si}-\d_{\si}g_{\al\be})
	=\ g^{\ga\si}\d^2_{\al\be}F\cdot\d_\si F.
\end{align*}
Hence, the Laplace-Beltrami operator $\Delta_g$ can be written in the form
\begin{align*}
	\Delta_g f=&\ \tr\nab^2 f=g^{\al\be}(\d_{\al\be}^2f-\Gamma^{\ga}_{\al\be}\d_{\ga} f),
\end{align*}
for any twice differentiable function $f:\Sigma\rightarrow \R$. The curvature tensor $R$ on the Riemannian manifold $(\Sigma,g)$ is given by
\begin{equation*}             \label{R}
R_{\ga\al\be}^{\si}=\d_{\al} \Gamma_{\be\ga}^{\si}-\d_{\be} \Gamma_{\al\ga}^{\si} +\Gamma_{\be\ga}^m\Gamma_{\al m}^{\si}  -\Gamma_{\al\ga}^m\Gamma_{\be m}^{\si},\quad
R_{\al\be\ga\si}=g_{\mu\al}R_{\be\ga\si}^{\mu}.
\end{equation*}
We will also use the Ricci curvature
\begin{equation*}
\Ric_{\al\be}=R^{\si}_{\al\si\be}=g^{\si\ga}R_{\ga\al\si\be}.
\end{equation*}

Next, we compute the second fundamental form.
Let $\bar{\nab}$ be the Levi-Civita connection in $(\R^{d+2},g_{\R^{d+2}})$ and let $\bh$ be the second fundamental form for $\Sigma$ as an embedded manifold. Then by the Gauss relation we have
\begin{align*}
	\bh_{\al\be}=&\bh(\d_{\al},\d_{\be})=\bar{\nab}_{\d_{\al}} \d_{\be} F-F_{\ast}(\nab_{\d_{\al}}\d_{\be})
	=\d_{\al\be}^2 F-\Gamma_{\al\be}^{\ga} \d_{\ga} F.
\end{align*}
This gives the mean curvature $\bH$ at $F(x)$,
\begin{equation*}
	\bH=\tr_g \bh=\Delta_g F.
\end{equation*}
Hence, the $F$-equation in (\ref{Main-Sys}) is rewritten as
\begin{equation*}  
	(\d_t F)^{\perp}=J(F)\Delta_g F=J(F)g^{\al\be}(\d^2_{\al\be}F-\Gamma^{\ga}_{\al\be}\d_{\ga} F).
\end{equation*}
This equation is still independent of the choice of coordinates
in $\Sigma^d$.

\subsection{The complex structure equations}
This part is inspired by Gomez \cite{Gomez}.
We introduce a complex structure on the normal bundle $N\Sigma_t$. This is achieved
by choosing $\{\nu_1,\nu_2\}$ to be an orthonormal basis of $N\Si_t$ such that
\[
J\nu_1=\nu_2,\quad J\nu_2=-\nu_1.
\]
Note that such a choice is not unique.

The vectors $\{ F_1,\cdots,F_d,\nu_1,\nu_2\}$ form a frame at each point on the manifold $(\Sigma,g)$, where $F_{\al}=\d_{\al}F$ for $\al\in\{1,\cdots,d\}$. We define the tensors $\kappa_{\al\be}$, $\tau_{\al\be}$, the connection coefficients $A_{\al}$ and the temporal component $B$ of the connection in the normal bundle by
\[
\kappa_{\al\be}:=\d_{\al} F_{\be}\cdot\nu_1,\quad \tau_{\al\be}:=\d_{\al} F_{\be}\cdot\nu_2,\quad A_{\al}=\d_{\al}\nu_1\cdot\nu_2, \quad B=\d_t \nu_1\cdot \nu_2.
\]
We then define the complex vector field $m$ and  the complex second fundamental form tensor $\la_{\al\be}$ to be
\begin{equation*}
	m=\nu_1+i\nu_2,\quad \lambda_{\al\be}=\kappa_{\al\be}+i\tau_{\al\be},
\end{equation*}
and define the \emph{complex scalar mean curvature} $\psi$ to be the trace of $\lambda$,
\begin{equation}  \label{csmc}
    \psi=\tr \la= g^{\al\be}\la_{\al\be}.
\end{equation}
If we differentiate the frame, then we obtain a set of structure equations of the following type
\begin{equation}               \label{strsys-cpf}
\left\{\begin{aligned}
&\d_{\al}F_{\be}=\Gamma^{\ga}_{\al\be}F_{\ga}+\Re(\lambda_{\al\be}\bar{m}),\\
&\d_{\al}^A m=-\lambda^{\ga}_{\al} F_{\ga},
\end{aligned}\right.
\end{equation}
where $\d_{\al}^A=\d_{\al}+iA_{\al}$.

We then use the structure equations \eqref{strsys-cpf} to derive a set of constraints for $\la$ and $A$, and hence to obtain their elliptic equations.
Precisely, by \eqref{strsys-cpf} and the relations $\d_\al\d_\be F=\d_\be \d_\al F$, we obtain the Riemannian curvature and Ricci curvature
\begin{align}          \label{R-la}
R_{\si\ga\al\be}=\Re(\lambda_{\be\ga}\bar{\la}_{\al\si}-\la_{\al\ga}\bar{\la}_{\be\si}),\quad \Ric_{\ga\be}=\Re (\la_{\ga\be}\bar{\psi}-\la_{\ga\al}\bar{\la}_{\be}^{\al}),
\end{align}
as well as the Codazzi relations
\begin{equation}       \label{comm-la}
	\nab^A_{\al} \la^{\ga}_{\be}=\nab^A_{\be} \la^{\ga}_{\al}=\nab^{A,\ga}\la_{\al\be}.
\end{equation}
The structure equations \eqref{strsys-cpf} combined with the relations $\d_\al\d_\be m=\d_\be \d_\al m$ imply the compatibility condition for connection coefficients $A$
\begin{equation}          \label{cpt-AiAj-2}
\nab_{\al} A_{\be}-\nab_{\be} A_{\al}=\Im(\la^{\ga}_{\al}\bar{\la}_{\be\ga}).
\end{equation}

We state an elliptic system for the second fundamental form $\la$ in terms of $\psi$, using the Codazzi relations \eqref{comm-la} and $\psi=\tr \la$.
\begin{lemma}[Div-curl system for $\la$, Lemma 2.2 \cite{HT}] The second fundamental form $\la$ satisfies
    \begin{equation}     \label{div-curl-la}
    \nab^A_\al \la_{\be\ga}-\nab^A_\be \la_{\al\ga}=0,\quad
	      \nab^{A,\al}\la_{\al\be}=\nab^A_\be \psi.
\end{equation}
\end{lemma}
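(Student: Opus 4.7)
My plan is to obtain both identities as immediate algebraic consequences of the already-stated Codazzi relations \eqref{comm-la}, using only metric compatibility of $\nab$ (so that raising and lowering indices commutes with $\nab^A$) and the symmetry $\la_{\al\be} = \la_{\be\al}$. No new analytic input is needed beyond what is already set up above.

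First I would lower the upper index $\ga$ in \eqref{comm-la} via contraction with $g_{\ga\mu}$, then relabel $\mu \to \ga$. Since $\nab g = 0$, this converts \eqref{comm-la} into the purely lower-index form
\[
\nab^A_\al \la_{\be\ga} \;=\; \nab^A_\be \la_{\al\ga} \;=\; \nab^A_\ga \la_{\al\be},
\]
which, combined with the symmetry of $\la$, is the statement that the rank-3 tensor $\nab^A_\al \la_{\be\ga}$ is fully symmetric in $(\al,\be,\ga)$. The first identity $\nab^A_\al \la_{\be\ga} - \nab^A_\be \la_{\al\ga} = 0$ is then just the $\al \leftrightarrow \be$ symmetry.

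For the second identity, I would contract the symmetric form with $g^{\al\si}$ in the positions $(\al,\si)$, obtaining
\[
\nab^{A,\al}\la_{\al\be} \;=\; g^{\al\si}\nab^A_\si \la_{\al\be} \;=\; g^{\al\si}\nab^A_\be \la_{\si\al} \;=\; \nab^A_\be\bigl(g^{\al\si}\la_{\si\al}\bigr) \;=\; \nab^A_\be \psi,
\]
where the second equality uses the $\si \leftrightarrow \be$ symmetry from the previous step, the third uses metric compatibility, and the last is the definition $\psi = g^{\al\be}\la_{\al\be}$. Here $\nab^A_\be \psi = \d_\be \psi + iA_\be \psi$ is the natural covariant derivative of the scalar mean curvature as a section of the complex line bundle over $\Sigma$ on which $A$ is the connection, so the right-hand side is the intended quantity.

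There is no real obstacle; the only thing to be careful about is keeping track of which indices are being raised or lowered and verifying that metric compatibility justifies passing $g$ and $g^{-1}$ across $\nab^A$. Both identities in \eqref{div-curl-la} are then immediate.
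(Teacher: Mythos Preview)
Your argument is correct and is exactly the approach the paper indicates: the lemma is stated with reference to \cite{HT} and the preceding sentence explicitly says it follows from the Codazzi relations \eqref{comm-la} together with $\psi=\tr\la$, which is precisely what you carry out. There is nothing to add.
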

The second fundamental form $\la$ should also satisfy the constraint
\begin{equation}   \label{lambda-sim}
\la_{\al\be}=\la_{\be\al}.
\end{equation}

In order to both fix the gauge and obtain an elliptic system for $A$, we impose the Coulomb gauge condition
\begin{equation}\label{Coulomb}
    \nab^\al A_\al=0.
\end{equation}
We state the elliptic $A$-equations from the Ricci equations \eqref{cpt-AiAj-2}.
\begin{lemma}[Div-curl form for $A$] Under the Coulomb gauge condition \eqref{Coulomb}, the connection $A$ solves
    \begin{equation}      \label{Ellp-A}
    \nab_{\al} A_{\be}-\nab_{\be} A_{\al}=\Im(\la^{\ga}_{\al}\bar{\la}_{\be\ga}),\quad \nab^\al A_\al=0.
\end{equation}
\end{lemma}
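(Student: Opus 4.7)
The plan is essentially to recognize that this lemma is a direct consequence of assembling two pieces already in hand. The first equation in \eqref{Ellp-A} is nothing more than the compatibility condition \eqref{cpt-AiAj-2}, which was previously derived from the structure equations \eqref{strsys-cpf} by computing $[\d_\al,\d_\be] m$ using $\d_\al^A = \d_\al + i A_\al$ and requiring $\d_\al \d_\be m = \d_\be \d_\al m$; the antisymmetric part in $\al,\be$ yields the curvature of the normal connection and equates it to the Ricci-type bilinear form $\Im(\la^\ga_\al \bar\la_{\be\ga})$. Thus I would simply cite \eqref{cpt-AiAj-2} for the first identity.

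The second equation in \eqref{Ellp-A} is by definition the Coulomb gauge condition \eqref{Coulomb} which has just been imposed. So the second identity is tautological once the gauge is fixed.

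Putting the two equations together gives a div-curl system for the one-form $A$ on $(\Sigma,g)$, with source quadratic in $\la$. Since on $\R^d$ (with small perturbation of the flat metric, as per the main theorem's setup) the Hodge Laplacian $dd^* + d^* d$ is elliptic and invertible on suitable function spaces, this system determines $A$ uniquely in terms of $\la$. There is no obstacle here: the only observation needed is that the Coulomb condition together with the previously established Ricci equation \eqref{cpt-AiAj-2} exhausts the first-order information on $A$ and puts it into div-curl form, which is the statement of the lemma. The quantitative elliptic estimates for this system are deferred to Section~\ref{Ellip-sec} and do not belong to the proof of this lemma per se.
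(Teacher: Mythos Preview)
Your proposal is correct and matches the paper's approach exactly: the paper does not give a separate proof for this lemma, simply stating it as a repackaging of the Ricci equation \eqref{cpt-AiAj-2} together with the imposed Coulomb gauge \eqref{Coulomb}, which is precisely what you do. Your additional remarks about the Hodge Laplacian and elliptic solvability are correct context but, as you note yourself, belong to Section~\ref{Ellip-sec} rather than here.
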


As a corollary, we can derive a second order elliptic equation for $A$.
\begin{cor}
Under the Coulomb gauge condition \eqref{Coulomb} and harmonic coordinates \eqref{hm-coord}, the connection $A$ solves
    \begin{equation}      \label{Ellp-A1}
    \d_\ga \d^\ga A_\al=(\d_\ga g^{\ga\be}\d_\al -\d_\al g^{\ga\be}\d_\ga)A_\be-\d_\ga\Im(\la_{\al\si}\bar{\la}^{\ga\si}),
\end{equation}
\end{cor}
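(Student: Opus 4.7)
The plan is to take the first equation in \eqref{Ellp-A}, convert it to divergence form by raising an index and applying another partial derivative, and then use the Coulomb gauge together with the harmonic coordinate identity to eliminate unwanted second-order contributions. First, since the Christoffel symbols are symmetric in their lower indices, the curl relation in \eqref{Ellp-A} can be rewritten with flat derivatives as $\d_\al A_\be - \d_\be A_\al = \Im(\la^\ga_\al \bar\la_{\be\ga})$. I would then multiply both sides by $g^{\be\ga}$ and apply $\d_\ga$, which yields
\begin{equation*}
\d_\ga(g^{\be\ga}\d_\al A_\be) - \d_\ga(g^{\be\ga}\d_\be A_\al) = \d_\ga\bigl(g^{\be\ga}\Im(\la^\si_\al \bar\la_{\be\si})\bigr).
\end{equation*}

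The main computational step is simplifying each of the three resulting pieces. Distributing the derivatives, the second term on the left produces $\d_\ga\d^\ga A_\al$ directly. For the first term, Leibniz gives $(\d_\ga g^{\be\ga})\d_\al A_\be + g^{\be\ga}\d_\al\d_\ga A_\be$, and I would rewrite the second piece as $\d_\al(\d^\be A_\be) - (\d_\al g^{\be\ga})\d_\ga A_\be$, which isolates the factor $\d_\al(\d^\be A_\be)$. This is where the gauge conditions enter: the Coulomb constraint $\nab^\al A_\al = 0$ expands to $g^{\al\be}\d_\be A_\al - g^{\al\be}\Ga^\si_{\al\be}A_\si = 0$, and the harmonic coordinate identity $g^{\al\be}\Ga^\si_{\al\be}=0$ from \eqref{hm-coord} collapses this to $\d^\be A_\be = 0$, so $\d_\al(\d^\be A_\be)$ vanishes. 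What survives is exactly the pair of first-order terms $(\d_\ga g^{\ga\be})\d_\al A_\be - (\d_\al g^{\ga\be})\d_\ga A_\be$ appearing on the right of \eqref{Ellp-A1}.

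For the right-hand side, I would pull the metric inside the imaginary part and contract with $\bar\la_{\be\si}$ to produce $\bar\la^\ga_\si$; using the symmetry $\la_{\al\be} = \la_{\be\al}$ from \eqref{lambda-sim} (and its conjugate), one rewrites $g^{\be\ga}\la^\si_\al \bar\la_{\be\si} = \la_{\al\si}\bar\la^{\ga\si}$, matching the form in \eqref{Ellp-A1}. Combining everything gives the claimed equation. The step I expect to be most delicate is organizing the Leibniz expansions so that precisely those second-order terms not absorbable into $\d_\ga\d^\ga A_\al$ drop out; this requires both gauge conditions simultaneously, as neither the Coulomb condition nor the harmonic coordinate condition alone is sufficient to reduce the rough Hodge-type expression on $A$ to the clean flat-coordinate Laplacian $\d_\ga\d^\ga A_\al$ modulo $\d g$ terms.
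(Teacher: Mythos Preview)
Your proof is correct and follows essentially the same route as the paper: the paper first uses the harmonic coordinate condition together with the Coulomb gauge to rewrite the div-curl system \eqref{Ellp-A} as $g^{\ga\be}\d_\al A_\be - \d^\ga A_\al = \Im(\la^\si_\al\bar\la^\ga_\si)$ and $\d^\al A_\al = 0$, and then applies $\d_\ga$ to the first equation. Your argument carries out the same computation in a slightly different order (apply $\d_\ga$ first, then invoke the two gauge conditions to kill $\d_\al(\d^\be A_\be)$), but the substance is identical.
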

\begin{proof}
    By harmonic coordinates \eqref{hm-coord}, the div-curl system \eqref{Ellp-A} for $A$ can be rewritten as
    \begin{equation*}
        g^{\ga\be}\d_\al A_\be-\d^{\be} A_{\al}=\Im(\la^{\si}_{\al}\bar{\la}_{\si}^\ga),\quad \d^\al A_\al=0.
    \end{equation*}
    Then the equation \eqref{Ellp-A1} is obtained by applying $\d_\be$ to the first equation.
\end{proof}

\subsection{The elliptic equation for the metric \texorpdfstring{$g$}{} in harmonic coordinates} 
Here we take the next step towards fixing the gauge, by choosing to work in harmonic coordinates. Precisely, we will require the coordinate functions $\{x_{\al},\al=1,\cdots,d\}$ to be globally Lipschitz solutions of the elliptic equations
\begin{equation} \label{h-gauge}
\Delta_g x_{\al}=0.
\end{equation}
This determines the coordinates uniquely modulo time dependent affine transformations.
This remaining ambiguity will be removed later on by imposing suitable
boundary conditions at infinity.

Here, we will interpret the above harmonic coordinate condition at fixed time as an elliptic equation for the metric $g$.
The equations \eqref{h-gauge} can be expressed in terms of the Christoffel symbols $\Ga$, which  must satisfy the condition
\begin{equation}           \label{hm-coord}
g^{\al\be}\Ga^{\ga}_{\al\be}=0,\quad {\rm for}\ \ga=1,\cdots,d.
\end{equation}

In fact, we can obtain global harmonic coordinate by the smallness of $\d_x h$ in $H^s$ as follows. Here for a change of coordinate $y=x+\phi(x)$, we denote
\[  \tilde{F}(y)=F(x(y)),  \]
and denote its metric and Christoffel symbols as $\tilde{g}$ and $\tilde{\Ga}$, respectively.
\begin{lemma}[Existence of global harmonic coordinates, Proposition 8.1 \cite{HT}]
	Let $d\geq 3$, $s>\frac{d}{2}$, and
	$
	F:(\R^d_x,g)\rightarrow (\R^{d+2},g_{\R^{d+2}})
	$
	be an immersion with induced metric $g=I_d+h$. Assume that $\d_x h(x)$ is small in $H^{s}(dx)$, i.e.
	$\|\d_x h\|_{H^s}\leq \ep_0.$
	Then there exists a unique change of coordinates
	$y=x+\phi(x)$ with $\lim_{x\rightarrow\infty}\phi(x)=0$ and $\nab\phi$ uniformly small,
	such that the new coordinates $\{y_1,\cdots,y_d\}$ are global harmonic coordinates, namely,
	\begin{equation*}
	\tilde g^{\al\be}(y)\tilde{\Ga}_{\al\be}^\ga(y)=0,\quad \text{for any }y\in\R^d.
	\end{equation*}
	Moreover,
	\begin{equation*}  
	\|\d_x^{2}\phi(x)\|_{H^{s}(dx)}\lesssim \| \d_x h(x)\|_{H^{s}(dx)},
	\end{equation*}
	and, in the new coordinates $\{y_1,\cdots,y_d\}$,
	\begin{equation*}  
	\| \d_y\tilde h\|_{H^{s}(dy)}\lesssim \|  \d_x h\|_{H^{s}(dx)}.
	\end{equation*}
\end{lemma}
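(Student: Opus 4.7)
I would reduce the existence of harmonic coordinates to solving an elliptic system for the shift $\phi$. Harmonic coordinates $\{y^\ga\}$ on $(\R^d,g)$ are characterized by $\De_g y^\ga = 0$. Writing $y^\ga = x^\ga + \phi^\ga(x)$ and noting that $\De_g x^\ga = -g^{\al\be}\Ga^\ga_{\al\be}$, the problem reduces to solving the semilinear system
\begin{equation*}
    \De_g \phi^\ga \,=\, g^{\al\be}\Ga^\ga_{\al\be}, \qquad \phi(x) \to 0 \ \text{ as }\ |x|\to\infty.
\end{equation*}
The right-hand side is a rational expression in $h=g-I_d$ that is linear in $\d h$, hence $O(\ep_0)$ in $H^s$.

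\textbf{Solving the elliptic system.} I would split $\De_g = \De + (g^{\al\be}-\de^{\al\be})\d_{\al\be}^2 - g^{\al\be}\Ga^\si_{\al\be}\d_\si$, so that the equation becomes
\begin{equation*}
    -\De \phi^\ga \,=\, -g^{\al\be}\Ga^\ga_{\al\be} + (g^{\al\be}-\de^{\al\be})\d_{\al\be}^2\phi^\ga - g^{\al\be}\Ga^\si_{\al\be}\d_\si\phi^\ga,
\end{equation*}
and invert the flat Laplacian on $\R^d$, setting up a contraction in a Banach space of the form $X=\{\phi:\ \d^2\phi\in H^s,\ \phi\in L^{q}\}$ with $q$ a suitable Sobolev-conjugate exponent so that $\phi\to 0$ at infinity; the hypothesis $d\ge 3$ enters here through the boundedness of the Newtonian kernel $c_d|x|^{2-d}$ and via Riesz-potential embeddings. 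The source term is of size $\|\d h\|_{H^s}\lesssim\ep_0$, while the two corrections are controlled by $\|h\|_{L^\infty}\|\d^2\phi\|_{H^s} + \|\d h\|_{H^s}\|\d\phi\|_{L^\infty}$, both absorbable via Sobolev embedding $H^s\hookrightarrow L^\infty$ (using $s>d/2$) and the bootstrap ball $\|\d^2\phi\|_{H^s}\le C\ep_0$. The contraction then yields a unique solution $\phi$ satisfying $\|\d_x^2\phi\|_{H^s(dx)}\lesssim \|\d_x h\|_{H^s(dx)}$.

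\textbf{Diffeomorphism and pull-back of the metric.} Since $s+1>d/2$, Sobolev embedding gives $\|\nab \phi\|_{L^\infty}\ll 1$, so $x\mapsto x+\phi(x)$ is a local diffeomorphism; properness (from $\phi\to 0$ at infinity) upgrades this to a global diffeomorphism of $\R^d$ by the Hadamard-Caccioppoli theorem. The induced metric in the new coordinates is
\begin{equation*}
    \tilde g_{\al\be}(y)\,=\,\frac{\d x^\mu}{\d y^\al}\frac{\d x^\nu}{\d y^\be}\,g_{\mu\nu}(x(y)),
\end{equation*}
with $(\d x/\d y)=(I+\nab_x\phi)^{-1}$ expanded in a Neumann series. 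Writing $\tilde h=\tilde g - I_d$ as a polynomial expression in $h$ and $\nab\phi$ composed with $x(y)$, Moser-type tame nonlinear $H^s$ estimates combined with the bound on $\d^2\phi$ yield the desired $\|\d_y\tilde h\|_{H^s(dy)}\lesssim \|\d_x h\|_{H^s(dx)}$.

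\textbf{Main obstacle.} The principal difficulty is the global analysis on the noncompact domain $\R^d$: one must simultaneously enforce the decay $\phi\to 0$ at infinity and propagate the high-regularity $H^s$ control through the fixed-point argument. This forces a careful choice of the ambient function space that is compatible with both the mapping properties of $(-\De)^{-1}$ on $\R^d$ (which is dimension-sensitive, and degenerates in $d=2$) and the tame nonlinear estimates needed to close the Neumann expansion for $\tilde g$ and to verify the final $H^s$ bound on $\d_y\tilde h$.
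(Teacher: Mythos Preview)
This lemma is not proved in the present paper; it is quoted from \cite{HT} (Proposition~8.1 there), so there is no in-paper argument to compare your proposal against. Your outline is the standard route and matches what one expects in \cite{HT}: rewrite the harmonic-coordinate condition as the semilinear elliptic system $\Delta_g\phi^\ga=g^{\al\be}\Ga^\ga_{\al\be}$, perturb off the flat Laplacian, and run a contraction in a space that simultaneously captures decay of $\phi$ and $H^s$-regularity of $\d^2\phi$; then upgrade the local diffeomorphism to a global one and push the $H^s$ bound through the change of variables by tame product/composition estimates. One small point to tighten: you justify smallness of $\|h\|_{L^\infty}$ by ``Sobolev embedding $H^s\hookrightarrow L^\infty$,'' but the hypothesis only places $\d h$ in $H^s$, not $h$ itself. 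You can recover $\|h\|_{L^\infty}\lesssim\ep_0$ by combining $\d h\in L^2$ (hence $h\in L^{2d/(d-2)}$ for $d\ge 3$, using $h\to 0$ at infinity) with $\d h\in L^\infty$ via Gagliardo--Nirenberg; this is another place where the restriction $d\ge 3$ is genuinely used.
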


Under harmonic coordinate, the Ricci curvature formula \eqref{R-la} leads to an equation for the metric $g$:
\begin{lemma}[Elliptic equations of $g$, Lemma 2.4 \cite{HT}] In harmonic coordinates, the metric $g$ satisfies
	\begin{equation}\label{g-eq-original}
	\begin{aligned}
	g^{\al\be}\d^2_{\al\be}g_{\ga\si}=&\ [-\d_{\ga} g^{\al\be}\d_{\be} g_{\al\si}-\d_{\si} g^{\al\be}\d_{\be} g_{\al\ga}+\d_{\ga} g_{\al\be}\d_{\si} g^{\al\be}]\\
	&+2g^{\al\be}\Ga_{\si\al,\nu}\Ga^{\nu}_{\be\ga}-2\Re (\la_{\ga\si}\bar{\psi}-\la_{\al\ga}\bar{\la}_{\si}^{\al}).
	\end{aligned}
	\end{equation}
\end{lemma}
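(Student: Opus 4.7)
My plan is to compute $\Ric_{\ga\si}$ directly from its coordinate formula and then use the geometric identity \eqref{R-la} to replace it by the quadratic expression in $\la$ and $\psi$. Starting from
\[
\Ric_{\ga\si} = \d_\al \Ga^\al_{\ga\si} - \d_\si \Ga^\al_{\ga\al} + \Ga^\al_{\al\nu}\Ga^\nu_{\ga\si} - \Ga^\al_{\si\nu}\Ga^\nu_{\ga\al}
\]
and the definition of $\Ga$ in terms of $g$, one obtains an expression involving second derivatives of $g$, quadratic $\d g\cdot\d g$ terms, and $\Ga$-$\Ga$ products. The target is the rearrangement
\[
-2\Ric_{\ga\si} = g^{\al\be}\d^2_{\al\be}g_{\ga\si} - \bigl[-\d_\ga g^{\al\be}\d_\be g_{\al\si} - \d_\si g^{\al\be}\d_\be g_{\al\ga} + \d_\ga g_{\al\be}\d_\si g^{\al\be}\bigr] - 2g^{\al\be}\Ga_{\si\al,\nu}\Ga^\nu_{\be\ga},
\]
after which \eqref{R-la} immediately yields \eqref{g-eq-original}.

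The second-order reduction uses the harmonic gauge \eqref{hm-coord}. Expanding $\d_\al\Ga^\al_{\ga\si}$ yields three second-derivative contributions, schematically of the form $g^{\al\mu}\d_\al\d_\ga g_{\mu\si}$, $g^{\al\mu}\d_\al\d_\si g_{\mu\ga}$ and $-g^{\al\mu}\d_\al\d_\mu g_{\ga\si}$, each carrying a factor $1/2$. Separately, $\Ga^\al_{\ga\al} = \tfrac12 g^{\al\mu}\d_\ga g_{\al\mu}$, so $\d_\si\Ga^\al_{\ga\al}$ contributes $\tfrac12 g^{\al\mu}\d_\si\d_\ga g_{\al\mu}$ plus a $(\d g)^2$ remainder. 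Differentiating $g^{\al\be}\Ga^\mu_{\al\be}=0$ once in $\d_\si$ and using $\d_\al g^{\mu\nu} = -g^{\mu\rho}g^{\nu\om}\d_\al g_{\rho\om}$ produces the schematic identity
\[
g^{\al\mu}\d_\al\d_\si g_{\mu\ga} = \tfrac12 g^{\al\mu}\d_\ga\d_\si g_{\al\mu} + (\d g)^2,
\]
together with its $\ga\leftrightarrow\si$ analogue. Substituting both into the Ricci expression, all mixed second-derivative contributions cancel against the term coming from $\d_\si\Ga^\al_{\ga\al}$, leaving only the principal wave-type piece $-\tfrac12 g^{\al\be}\d^2_{\al\be}g_{\ga\si}$.

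What remains is the bookkeeping of the quadratic pool. Collecting all $\d g\cdot \d g$ terms produced by the reductions above and combining them with the $\Ga^\al_{\al\nu}\Ga^\nu_{\ga\si}$ product, while repeatedly applying $\d_\al g^{\mu\nu} = -g^{\mu\rho}g^{\nu\om}\d_\al g_{\rho\om}$ and the symmetries of $g$, one rearranges the result into the three-term bracket of \eqref{g-eq-original}. The remaining $\Ga$-$\Ga$ product survives untouched as $-g^{\al\be}\Ga_{\si\al,\nu}\Ga^\nu_{\be\ga}$. Multiplying through by $-2$ and substituting \eqref{R-la} for $\Ric_{\ga\si}$ completes the derivation. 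I expect this final bookkeeping to be the main obstacle: the calculation is entirely elementary but lengthy, and one must be careful about signs coming from raising and lowering indices to land exactly on the symmetric bracketed form stated. In spirit the derivation reproduces the classical reduction of the Ricci tensor in harmonic coordinates, and the only problem-specific ingredient is the substitution \eqref{R-la} at the end.
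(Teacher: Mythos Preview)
Your approach is correct and is precisely the classical derivation: in harmonic coordinates the Ricci tensor reduces to $-\tfrac12 g^{\al\be}\d^2_{\al\be}g_{\ga\si}$ plus quadratic $(\d g)^2$ terms, and the Gauss equation \eqref{R-la} supplies the right-hand side. The paper itself does not give a proof here; it cites Lemma~2.4 of \cite{HT}, where the same computation is carried out. Your outline matches that argument, including the key step of differentiating the harmonic gauge relation $g^{\al\be}\Ga_{\ga,\al\be}=0$ to eliminate the mixed second derivatives $g^{\al\be}\d_\al\d_\si g_{\be\ga}$ in favour of $\tfrac12 g^{\al\be}\d_\ga\d_\si g_{\al\be}$, after which the latter cancels against the contribution from $\d_\si\Ga^\al_{\ga\al}$. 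The only remaining work is the quadratic bookkeeping you flag, which is routine.
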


\subsection{The motion of the frame \texorpdfstring{$\{F_1,\cdots,F_d,m\}$}{} under (SMCF)} Here we derive the equations of motion for the frame, assuming that the immersion $F$ satisfies (\ref{Main-Sys}). Then we state the Schr\"odinger equation for mean curvature $\psi$ and the elliptic equations for advection fields $V$ and temporal connection coefficient $B$.

We begin by rewriting  the SMCF equations in the form
\begin{equation*}   
\d_t F=J(F)\bH(F)+V^{\ga} F_{\ga},
\end{equation*}
where $V^{\ga}$ is a vector field on the manifold $\Sigma$, which
in general depends on the choice of coordinates.
By the definition of $m$ and $\la_{\al\be}$, we get
\begin{equation}          \label{sys-cpf}
\d_t F=-\Im (\psi\bar{m})+V^{\ga} F_{\ga}.
\end{equation}

Applying $\d_{\al}$ to (\ref{sys-cpf}), by the structure equations (\ref{strsys-cpf}) and $m\bot F_{\al}=0$ we obtain
the equations of motion for the frame
\begin{equation}              \label{mo-frame}
\left\{\begin{aligned}
&\d_t F_{\al}=-\Im (\d^A_{\al} \psi \bar{m}-i\la_{\al\ga}V^{\ga} \bar{m})+[\Im(\psi\bar{\la}^{\ga}_{\al})+\nab_{\al} V^{\ga}]F_{\ga},\\
&\d^{B}_t m=-i(\d^{A,\al} \psi -i\la^{\al}_{\ga}V^{\ga} )F_{\al},
\end{aligned}\right.
\end{equation}
where $\d_t^B=\d_t+iB$.

From this we obtain the evolution equation for the metric $g$. Precisely, we denote
\begin{equation*}    
G_{\al\be}=\Im(\psi\bar{\la}_{\al\be})+\frac{1}{2}(\nab_\al V_\be +\nab_\be V_\al).
\end{equation*}
By the definition of the induced metric $g$ (\ref{g_metric}) and \eqref{mo-frame} we have
\begin{align}     \label{g_dt}
\d_t g_{\al\be}
=\ 2G_{\al\be},
\end{align}
and the evolutions of $g^{\al\be}$ and $\sqrt{\det g}$ are
\begin{equation*}
\d_t g^{\al\be}=-2G^{\al\be},\qquad
\d_t \sqrt{\det g}= \nab_\al V^\al\sqrt{\det g}.
\end{equation*}
These can give the evolution equations for Christoffel symbols
\begin{equation}   \label{dt-Ga}
     \d_t \Ga^\ga_{\al\be}=\nab_\al G^\ga_\be+\nab_\be G^\ga_\al-\nab^\ga G_{\al\be}.
     \end{equation}	
Moreover, by \eqref{g_dt} and \eqref{dt-Ga} we have
\begin{equation*}
    \d_t (g^{\al\be}\Ga_{\al\be}^\ga)
        = -2G^{\al\be}\Ga_{\al\be}^\ga+2\nab_\al\Im(\psi\bar{\la}^{\al\ga})+\De_g V^\ga+ \Re(\la^\ga_\si \bar{\psi}-\la_{\al\si}\bar{\la}^{\al\ga})V^\si.
\end{equation*}

So far, the choice of $V$ has been unspecified; it depends on the choice of coordinates
on our manifold as the time varies. However, once the latter is fixed via the harmonic coordinate condition (\ref{hm-coord}), we can also derive an elliptic equation for the advection field $V$:

\begin{lemma}[Elliptic equation for the vector field $V$, Lemma 2.5 \cite{HT}] Under the harmonic coordinate condition~\eqref{hm-coord}, the advection field $V$ solves
	\begin{equation}\label{Ellp-X}
	\begin{aligned}
	\De_g V^{\ga}=&\ -2\nab_\al\Im(\psi\bar{\la}^{\al\ga})-\Re (\la^{\ga}_{\si}\bar{\psi}-\la_{\al\si}\bar{\la}^{\al\ga})V^{\si}\\
	&\ +2(\Im(\psi\bar{\la}^{\al\be})+\nab^{\al}V^{\be})\Ga^{\ga}_{\al\be}.
	\end{aligned}
	\end{equation}
\end{lemma}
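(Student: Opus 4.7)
The plan is to exploit the fact that the harmonic coordinate condition \eqref{hm-coord}, namely $g^{\alpha\beta}\Gamma^\gamma_{\alpha\beta}=0$, is required to hold at every time $t$, and therefore its time derivative must vanish identically. The formula for $\partial_t(g^{\alpha\beta}\Gamma^\gamma_{\alpha\beta})$ has already been derived immediately above the lemma statement, and the $\Delta_g V^\gamma$ term appears in it explicitly. Hence the elliptic equation for $V$ will be obtained by merely setting that expression equal to zero and rearranging.

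First, I would recall that preservation of harmonic coordinates along the flow forces
\[
0=\d_t(g^{\al\be}\Ga^\ga_{\al\be})=-2G^{\al\be}\Ga^\ga_{\al\be}+2\nab_\al\Im(\psi\bar{\la}^{\al\ga})+\De_g V^\ga+\Re(\la^\ga_\si\bar\psi-\la_{\al\si}\bar\la^{\al\ga})V^\si,
\]
which is precisely the identity already derived from \eqref{g_dt} and \eqref{dt-Ga}. Solving for $\De_g V^\ga$ immediately yields
\[
\De_g V^\ga = 2G^{\al\be}\Ga^\ga_{\al\be}-2\nab_\al\Im(\psi\bar{\la}^{\al\ga})-\Re(\la^\ga_\si\bar\psi-\la_{\al\si}\bar\la^{\al\ga})V^\si.
\]

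Next, I would substitute the definition $G_{\al\be}=\Im(\psi\bar{\la}_{\al\be})+\tfrac12(\nab_\al V_\be+\nab_\be V_\al)$. Because $\Ga^\ga_{\al\be}$ is symmetric in $(\al,\be)$, the symmetrised covariant derivative of $V$ contracts with it just as $\nab^\al V^\be$ would, so
\[
G^{\al\be}\Ga^\ga_{\al\be}=\bigl(\Im(\psi\bar{\la}^{\al\be})+\nab^\al V^\be\bigr)\Ga^\ga_{\al\be}.
\]
Plugging this back gives exactly \eqref{Ellp-X}.

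There is no serious obstacle here: everything hinges on the two ingredients already in hand, namely the evolution formula for $g^{\al\be}\Ga^\ga_{\al\be}$ and the symmetry of the Christoffel symbols. The only small bookkeeping point worth flagging is that the harmonic gauge condition is truly an identity in $t$ (not merely an initial constraint), so differentiating it in $t$ and setting the result to zero is legitimate; this reflects the fact that, in our setup, harmonic coordinates are prescribed at every time slice and $V$ is precisely the auxiliary field whose role is to enforce propagation of this gauge condition. The lemma thus serves as the definition of $V$ as the solution of this elliptic equation with suitable decay at infinity.
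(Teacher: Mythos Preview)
Your proposal is correct and follows exactly the approach set up in the paper: the formula for $\d_t(g^{\al\be}\Ga^\ga_{\al\be})$ is derived immediately before the lemma, and setting it to zero (since harmonic coordinates are imposed at every time) together with the substitution of $G^{\al\be}$ and the symmetry of $\Ga^\ga_{\al\be}$ in $(\al,\be)$ yields \eqref{Ellp-X} directly. There is nothing to add.
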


Next, from the equations \eqref{mo-frame} of motion for the frame we derive the main Schr\"{o}dinger equation and the second compatibility condition. The starting point is  the commutation relation
\begin{equation*} 
[\d^{B}_t,\d^A_{\al}]m=i(\d_t A_{\al}-\d_{\al} B)m,
\end{equation*}
which can be expanded, see \cite{HT}, equating the coefficients of the tangent vectors and of the normal vector $m$. Using the expressions 
(\ref{strsys-cpf}), (\ref{mo-frame}) for the derivatives of the frame,
this yields the evolution equation for $\la$
\begin{equation}\label{main-eq-abst}
\d^{B}_t\la^{\si}_{\al}+\la^{\ga}_{\al}(\Im(\psi\bar{\la}^{\si}_{\ga})+\nab_{\ga} V^{\si})=i\nab^A_{\al}(\d^{A,\si} \psi -i\la^{\si}_{\ga}V^{\ga} ),
\end{equation}
as well as the compatibility condition (curvature relation)
\begin{equation}\label{Cpt-A&B}
\d_t A_{\al}-\d_{\al} B = \Re(\la_{\al}^{\ga}\overline{\nab^A_{\ga}\psi})-\Im (\la^\ga_\al \bar{\la}_{\ga\si})V^\si.
\end{equation}

This in turn allows us to  use the Coulomb gauge condition \eqref{Coulomb} in order to obtain
an elliptic equation for $B$:

\begin{lemma}[Elliptic equation for $B$] The temporal connection coefficient $B$ solves
	\begin{equation}                \label{Ellip-B}
	\begin{aligned}
	\nab^{\ga}\nab_{\ga}B&=-\nab^\ga\nab_\si\Re(\la^\si_\ga \overline{\psi})+\frac{1}{2}\De_g|\psi|^2+\nab^{\ga}[\Im (\la^\si_\ga \bar{\la}_{\si\be})V^\be]\\
	&\quad +(2\Im(\psi\bar{\la}^{\be\ga})+\nab^{\be}V^{\ga}+\nab^{\ga}V^{\be})\d_{\be}A_{\ga}.
	\end{aligned}
	\end{equation}
\end{lemma}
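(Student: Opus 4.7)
The natural starting point is the curvature/compatibility relation \eqref{Cpt-A&B}, which already expresses $\partial_\alpha B$ in terms of $\partial_t A_\alpha$ and zero-th order quantities in $\psi,\lambda,V,A$. The idea is to apply the covariant divergence $\nabla^\alpha$ to both sides, so that the left hand side becomes $\nabla^\alpha\partial_t A_\alpha - \Delta_g B$, and then to use the Coulomb gauge \eqref{Coulomb} and the harmonic coordinate condition \eqref{hm-coord} to eliminate $\partial_t A$ in favour of lower order terms, while simultaneously manipulating the right hand side using the Codazzi relations \eqref{div-curl-la}.

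For the divergence of $\partial_t A_\alpha$, I would differentiate $\nabla^\alpha A_\alpha=0$ in time. Writing $\nabla^\alpha A_\alpha = g^{\alpha\beta}(\partial_\beta A_\alpha - \Gamma^\gamma_{\beta\alpha}A_\gamma)$ and using $\partial_t g^{\alpha\beta}=-2G^{\alpha\beta}$ together with \eqref{dt-Ga}, one gets
\[
\nabla^\alpha \partial_t A_\alpha = 2G^{\alpha\beta}\nabla_\beta A_\alpha + g^{\alpha\beta}(\partial_t\Gamma^\gamma_{\beta\alpha})A_\gamma .
\]
The second term can be handled cleanly by differentiating the harmonic coordinate identity $g^{\alpha\beta}\Gamma^\gamma_{\alpha\beta}=0$ in $t$, which yields $g^{\alpha\beta}\partial_t\Gamma^\gamma_{\alpha\beta}=2G^{\alpha\beta}\Gamma^\gamma_{\alpha\beta}$. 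Substituting and using $\nabla_\beta A_\alpha + \Gamma^\gamma_{\beta\alpha}A_\gamma = \partial_\beta A_\alpha$ collapses everything to
\[
\nabla^\alpha\partial_t A_\alpha = 2G^{\alpha\beta}\partial_\beta A_\alpha
= \bigl(2\Im(\psi\bar\lambda^{\beta\gamma}) + \nabla^\beta V^\gamma + \nabla^\gamma V^\beta\bigr)\partial_\beta A_\gamma,
\]
which accounts for the last term in \eqref{Ellip-B}.

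For the divergence of the right hand side of \eqref{Cpt-A&B}, the term $\nabla^\alpha[\Im(\lambda^\gamma_\alpha\bar\lambda_{\gamma\sigma})V^\sigma]$ appears directly (with a sign) as the third term in \eqref{Ellip-B}, so nothing more needs to be done there. For the main term $\Re(\lambda^\gamma_\alpha\overline{\nabla^A_\gamma\psi})$, I would rewrite it as a divergence modulo $|\psi|^2$. Using Leibniz for the $A$-connection, the Codazzi identity $\nabla^{A,\sigma}\lambda_{\sigma\gamma}=\nabla^A_\gamma\psi$ from \eqref{div-curl-la}, and the cancellation $\Re(\nabla^A_\gamma\psi\,\bar\psi)=\tfrac{1}{2}\partial_\gamma|\psi|^2$, one obtains the pointwise identity
\[
\nabla_\sigma\Re(\lambda^\sigma_\gamma\bar\psi) = \tfrac{1}{2}\partial_\gamma|\psi|^2 + \Re(\lambda^\sigma_\gamma\,\overline{\nabla^A_\sigma\psi}),
\]
hence $\Re(\lambda^\gamma_\alpha\overline{\nabla^A_\gamma\psi}) = \nabla_\sigma\Re(\lambda^\sigma_\alpha\bar\psi) - \tfrac{1}{2}\partial_\alpha|\psi|^2$. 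Taking one more $\nabla^\alpha$ yields the first two terms in \eqref{Ellip-B}.

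The only delicate point in this plan is the first step, since one must correctly exploit \emph{both} gauge conditions simultaneously: the Coulomb gauge tells us $\partial_t(\nabla^\alpha A_\alpha)=0$, but $\nabla^\alpha\partial_t A_\alpha$ and $\partial_t(\nabla^\alpha A_\alpha)$ differ by commutator terms involving $\partial_t g^{\alpha\beta}$ and $\partial_t\Gamma^\gamma_{\alpha\beta}$, and the harmonic coordinate condition is what turns the Christoffel commutator into a purely algebraic expression in $G$ and $A$, so that no second derivatives of the geometry appear in \eqref{Ellip-B}. Once these cancellations are carried out, combining the two computations and relabeling dummy indices produces exactly the elliptic equation \eqref{Ellip-B}.
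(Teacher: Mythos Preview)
Your proposal is correct and follows essentially the same approach as the paper. The paper's proof simply cites Lemma~2.6 in \cite{HT} for the bulk of the computation and then displays the identity
\[
\Re(\la_\ga^\si \overline{\nab^A_\si \psi})=\nab_\si\Re(\la_\ga^\si \overline{\psi})-\tfrac{1}{2}\nab_\ga|\psi|^2,
\]
which is exactly the pointwise relation you derive; your write-up spells out in full the $\nabla^\alpha\partial_t A_\alpha$ computation (using both the Coulomb and harmonic gauges) that the paper defers to the reference.
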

\begin{proof}
    The equation \eqref{Ellip-B} is obtained by Lemma 2.6 in \cite{HT} and the following relation
     \[  \Re(\la_\ga^\si \overline{\nab^A_\si \psi})=\nab_\si\Re(\la_\ga^\si \overline{ \psi})-\Re(\nab^A_\ga\psi \overline{ \psi})=\nab_\si\Re(\la_\ga^\si \overline{ \psi})-\frac{1}{2}\nab_\ga|\psi|^2.  \]
\end{proof}

Finally, we use (\ref{main-eq-abst}) to derive the main equation, i.e. the Schr\"odinger equation for $\psi$. By (\ref{comm-la}),
contracting \eqref{main-eq-abst} yields
\begin{equation}       \label{main-eq-simp}
i(\d^{B}_t-V^{\ga}\nab^A_{\ga})\psi+\nab^A_{\al}\nab^{A,\al}\psi=-i\la^{\ga}_{\si}\Im(\psi\bar{\la}^{\si}_{\ga}).
\end{equation}

\subsection{The main result for modified Schr\"{o}dinger system from SMCF}

To conclude, under the Coulomb gauge condition  $\nab^{\al}A_{\al}=0$ and the harmonic coordinate condition $g^{\al\be}\Ga^{\ga}_{\al\be}=0$, by \eqref{main-eq-simp}, \eqref{div-curl-la}, \eqref{g-eq-original}, \eqref{Ellp-X}, \eqref{Ellp-A} and \eqref{Ellip-B}, we obtain the Schr\"{o}dinger equation for the complex mean curvature $\psi$
\begin{equation}        \label{mdf-Shr-sys-2}
\left\{
\begin{aligned}
    & i(\d^{B}_t-V^{\ga}\nab^A_{\ga})\psi+\nab^A_{\al}\nab^{A,\al}\psi=-i\la^{\ga}_{\si}\Im(\psi\bar{\la}^{\si}_{\ga}),
    \\
    & \psi(0) = \psi_0,
    \end{aligned}
\right.
\end{equation}
where the metric $g$, curvature tensor $\la$, the advection field $V$, connection coefficients $A$ and $B$ are determined at fixed time in an elliptic fashion via the following equations
\begin{equation}           \label{ell-syst}
	\left\{\begin{aligned}
	    &\nab^A_\al \la_{\be\ga}-\nab^A_\be \la_{\al\ga}=0,\quad
	      \nab^{A,\al}\la_{\al\be}=\nab^A_\be \psi,\\
		&\begin{aligned}
		g^{\al\be}\d^2_{\al\be}g_{\ga\si}=&\ [-\d_{\ga} g^{\al\be}\d_{\be} g_{\al\si}-\d_{\si} g^{\al\be}\d_{\be} g_{\al\ga}+\d_{\ga} g_{\al\be}\d_{\si} g^{\al\be}]\\
		&+2g^{\al\be}\Ga_{\si\al,\nu}\Ga^{\nu}_{\be\ga}-2\Re (\la_{\ga\si}\bar{\psi}-\la_{\al\ga}\bar{\la}_{\si}^{\al}),
		\end{aligned}\\
		&\begin{aligned}
	    \nab^{\al}\nab_{\al}V^{\ga}=&\ 2\nab_\al \Im(\la^{\al\ga}\bar{\psi})-\Re (\la^{\ga}_{\si}\bar{\psi}-\la_{\al\si}\bar{\la}^{\al\ga})V^{\si}\\
	    &+2(\Im(\psi\bar{\la}^{\al\be})+\nab^{\al}V^{\be})\Ga^{\ga}_{\al\be},
	    \end{aligned}\\
	    &\nab_{\al} A_{\be}-\nab_{\be} A_{\al}=\Im(\la^{\ga}_{\al}\bar{\la}_{\be\ga}),\quad \nab^\al A_\al=0,\\
		&\begin{aligned}\nab^{\ga}\nab_{\ga}B=&-\nab^\ga\nab_\si\Re(\la^\si_\ga \overline{\psi})+\frac{1}{2}\De_g|\psi|^2+\nab^{\ga}[\Im (\la^\si_\ga \bar{\la}_{\si\be})V^\be]\\
		&+(2\Im(\psi\bar{\la}^{\be\ga})+\nab^{\be}V^{\ga}+\nab^{\ga}V^{\be})\d_{\be}A_{\ga}.\end{aligned}
	\end{aligned}\right.
\end{equation}
Fixing the remaining degrees of freedom (i.e. the affine group for the choice of the
coordinates as well as the time dependence of the $SU(1)$ connection)
 we can assume that the  following conditions hold at infinity in an averaged sense:
 \begin{equation*}  
\la(\infty)=0,\quad g(\infty) = I_d, \quad V(\infty) = 0,\quad A(\infty) = 0, \quad B(\infty) = 0
 \end{equation*}
These are needed to insure  the unique solvability of the above elliptic equations
in a suitable class of functions. For the metric $g$ it will be useful to use the representation
\begin{equation*}
g = I_d + h
\end{equation*}
so that $h$ vanishes at infinity.

We note that the above elliptic system \eqref{mdf-Shr-sys-2} is accompanied by
a large family of compatibility conditions as follows:  \eqref{R-la}, \eqref{lambda-sim}, \eqref{cpt-AiAj-2}, \eqref{Coulomb}, \eqref{hm-coord}, \eqref{g_dt}
\eqref{main-eq-abst} and \eqref{Cpt-A&B}.
These conditions can all be shown to be satisfied for small solutions to the nonlinear system \eqref{mdf-Shr-sys-2}-\eqref{ell-syst}.

Now we recall the small data local well-posedness result for the (SMCF) system in \cite[Theorem 2.7]{HT} in terms of the above system:
\begin{thm}[Small data local well-posedness in the good gauge] \label{small-MSS-LWP}
	Let $s>\frac{d}{2}$, $d\geq 4$. Then there exists $\ep_0>0$ sufficiently small such that, for all initial data $\psi_0$ satisfying 
	\begin{equation*}
	    \|\psi_0\|_{H^{s}}\leq \ep_0, 
	\end{equation*} 
the modified Schr\"odinger system \eqref{mdf-Shr-sys-2}, with $(\la,h,V,A,B)$ determined via the
elliptic system \eqref{ell-syst}, is locally well-posed in $H^s$ on the time interval $I=[0,1]$. Moreover, the mean curvature satisfies
the bounds
	\begin{equation*}
	\lV \psi\rV_{l^2 \bX^s} +  \lV (\la,h,V,A,B)\rV_{\mathcal {E}^s}\lesssim \lV \psi_0\rV_{H^s}.
	\end{equation*}
    In addition, the mean curvature $\psi$ and the auxiliary functions $(\la,h,V,A,B)$ satisfy
	the constraints  \eqref{csmc}, \eqref{R-la}, \eqref{lambda-sim}, \eqref{cpt-AiAj-2}, \eqref{Coulomb} and \eqref{hm-coord} for any fixed time $t\in [0,1]$, and the evolutions \eqref{g_dt}, \eqref{main-eq-abst} and \eqref{Cpt-A&B}.
\end{thm}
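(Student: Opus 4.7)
The statement is a verbatim recall of \cite[Theorem 2.7]{HT}, so the plan is to sketch how one naturally attacks a small-data local well-posedness result for a quasilinear Schr\"odinger system in a carefully chosen gauge. The overall strategy splits into three layers: (i) solve the elliptic system \eqref{ell-syst} to recover the auxiliary variables $(\la,h,V,A,B)$ at fixed time as tame functions of $\psi$; (ii) view \eqref{mdf-Shr-sys-2} as a quasilinear Schr\"odinger equation with coefficients determined by $\psi$; (iii) prove local well-posedness by a Picard iteration in function spaces adapted to variable-coefficient Schr\"odinger operators. Throughout, the role of $\|\psi_0\|_{H^s} \le \ep_0$ is to keep $g$ close to $I_d$ and $A,V,B$ small, which preserves the asymptotic flatness of the Schr\"odinger operator and the ellipticity of all the equations in \eqref{ell-syst}.

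For step (i) I would solve the elliptic subsystem in a fixed-point scheme, exploiting the facts that each equation is a perturbation of a flat Laplacian: the $g$-equation by inverting $\De$ modulo small quadratic terms (after imposing $h\to 0$ at infinity to fix the affine freedom), the $V,A,B$ equations linearly in the leading order with small-coefficient zeroth- and first-order corrections, and $\la$ via the div-curl system \eqref{div-curl-la}. Using Moser-type tame estimates and the harmonic coordinate condition to gain one derivative back in $h$, I would obtain $\|(\la,h,V,A,B)\|_{\mathcal E^s}\lesssim \|\psi\|_{H^s}$ together with Lipschitz dependence on $\psi$, and verify that each compatibility identity (the trace relation \eqref{csmc}, symmetry \eqref{lambda-sim}, Ricci identity \eqref{R-la}, curvature relation \eqref{cpt-AiAj-2}, and the gauge conditions \eqref{Coulomb}, \eqref{hm-coord}) is automatically recovered from the elliptic system when $\psi$ itself is compatible at $t=0$.

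The heart of the argument, and where I expect the main obstacle to lie, is step (iii): the linear variable-coefficient Schr\"odinger estimates for the operator $i\d_t^B - V^\ga \nab_\ga^A + \nab_\al^A\nab^{A,\al}$ with coefficients produced by (i). I would work in the $l^2 X^s$ spaces of Marzuola--Metcalfe--Tataru \cite{MMT1,MMT3}, which combine $L^\infty_t H^s_x$ with frequency-localized local smoothing and Strichartz components summed over dyadic spatial cubes. Two gauge features are essential here. First, the harmonic coordinate condition \eqref{hm-coord} gives an elliptic gain: $h = g-I_d$ is controlled one derivative above its natural scaling, which is exactly what is needed to paradifferentially absorb the principal symbol $g^{\al\be}\xi_\al\xi_\be$ without losing a derivative. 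Second, the Coulomb gauge $\nab^\al A_\al=0$ produces an analogous gain for the first-order magnetic term, so that the commutator and lower-order perturbations of the flat Schr\"odinger equation are bounded paradifferentially. With the linear estimates established, a contraction mapping in a small ball of $l^2 X^s$ on $[0,1]$ yields existence, uniqueness, and the stated bound $\|\psi\|_{l^2 X^s} + \|(\la,h,V,A,B)\|_{\mathcal E^s}\lesssim \|\psi_0\|_{H^s}$.

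Finally, to recover the constraints and the evolution identities \eqref{g_dt}, \eqref{main-eq-abst}, \eqref{Cpt-A&B} for the constructed solution, I would verify that each is propagated by the flow: differentiating in time and using \eqref{mdf-Shr-sys-2}-\eqref{ell-syst} one obtains for each constraint a linear homogeneous elliptic or evolutionary equation with vanishing initial data, hence identically zero. The one genuine consistency check is that the elliptic equation \eqref{Ellp-X} for $V$ is compatible with the time derivative of \eqref{hm-coord}; this is done by applying $\d_t$ to $g^{\al\be}\Ga^\ga_{\al\be}=0$, substituting the evolution of $g$ from \eqref{g_dt}, and matching with \eqref{Ellp-X}, which is precisely how the elliptic equation for $V$ is derived in the first place.
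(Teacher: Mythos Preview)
Your identification is correct: the paper does not prove this theorem but simply recalls it from \cite[Theorem 2.7]{HT}, and your three-layer sketch (fixed-time elliptic solvability for $(\la,h,V,A,B)$, reduction to a quasilinear Schr\"odinger equation, and iteration in the $l^2 X^s$ spaces of \cite{MMT1,MMT3}) accurately reflects the structure of the argument in \cite{HT}. There is nothing further to compare, since the present paper offers no proof of its own for this statement.
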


Here the solution $\psi$ satisfies in particular the expected bounds
\[
\| \psi \|_{C[0,1;H^{s}]}\leq \|\psi\|_{l^2X^s} \lesssim \|\psi_0\|_{H^{s}}.
\]
The spaces $l^2 {\bX}^s$ and  ${\mathcal E}^s$, defined in \cite[Section 3]{HT}, contain a more complete description of the  full set of variables $\psi,\la,h,V,A,B$, which includes both Sobolev regularity and local energy bounds.
In the above theorem,  by well-posedness we mean a full Hadamard-type well-posedness, see \cite{IT}.

The main result of this paper is to extend the above local solution for small data to global for the (SMCF) system in Theorem~\ref{GWP-thm} in terms of the above system. The next theorem represents the harmonic/Coulomb gauge form of our main result in Theorem~\ref{GWP-thm}:

\begin{thm}[Small data global regularity in the good gauge]   \label{GWP-MSS-thm}
	Let $s_d$, $r_d$ be as \eqref{Reg-index} and \eqref{r_d} respectively for $d\geq 4$. Then there exists $\ep_0>0$ sufficiently small such that, for all initial data $\psi_0$ satisfying
	\begin{equation}\label{ini-data}
	    \|\psi_0\|_{H^{s_d}}\leq \ep_0, 
	\end{equation} 
the modified Schr\"odinger system \eqref{mdf-Shr-sys-2}, with $(\la,h,V,A,B)$ determined via the
elliptic system \eqref{ell-syst}, is globally well-posed in $H^{s_d}$. Moreover, the mean curvature satisfies the bound \eqref{psi-full-reg} and the scattering \eqref{Scatter}.
\end{thm}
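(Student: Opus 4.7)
The plan is to close a bootstrap argument on the lifespan of solutions produced by Theorem~\ref{small-MSS-LWP}, with the energy estimates of Proposition~\ref{En-prop} and the Strichartz estimates of Proposition~\ref{impbd-1} as the two pillars. Fix $\psi_0$ satisfying \eqref{ini-data}, let $[0, T^{\ast})$ be the maximal lifespan in the good gauge, and on every subinterval $[0,T] \subset [0, T^{\ast})$ impose
\[
\|\psi\|_{L^\infty([0,T];H^{s_d})} \le C_0 \epsilon_0, \qquad \|\psi\|_{S[0,T]} \le C_0 \epsilon_0.
\]
I would then show that, for $\epsilon_0$ small relative to a fixed universal constant $C_0$, both inequalities can be improved with $C_0$ replaced by $C_0/2$. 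Combined with the continuation criterion implicit in the local theorem, this forces $T^{\ast} = \infty$ and produces \eqref{psi-full-reg}.

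The Strichartz half of the bootstrap is direct from Proposition~\ref{impbd-1}: the resulting inequality
\[
\|\psi\|_{S[0,T]} \le C_2\bigl( \|\psi_0\|_{H^{s_d}} + (C_0 \epsilon_0)^2 \|\psi\|_{S[0,T]}\bigr)
\]
admits absorption of the quadratic tail for small $\epsilon_0$. For the energy half, the decisive observation is that $\|\lambda\|_{L^2_t L^\infty_x}$ is dominated by $\|\psi\|_{S[0,T]}$: this is precisely what the threshold \eqref{Reg-index} and the Strichartz exponents \eqref{S4}--\eqref{S5} are designed for, through the elliptic div-curl estimate for \eqref{div-curl-la} from Section~\ref{Ellip-sec} followed by Sobolev embedding at $s_d > d/r_d + 2$, together with the auxiliary $L^2_t W^{1,4}_x$ slot that is needed in $d=4$ for the linearized equation. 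Once $\int_0^T \|\lambda\|_{L^\infty}^2 \, ds \lesssim (C_0 \epsilon_0)^2$ is in hand, \eqref{E-psi} gives an energy growth factor $\exp(O((C_0 \epsilon_0)^2))$, as close to $1$ as needed, which closes the energy bootstrap.

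The main obstacle is that the good energies of Proposition~\ref{En-prop} are constructed only at integer regularity, whereas the threshold $s_d$ is generally noninteger. Following Step~5 of the outline and the second remark after Proposition~\ref{En-prop}, I would first run the bootstrap above at an integer level $k = N \gg s_d$, producing a smooth global solution with $\|\psi(t)\|_{H^N} \lesssim \|\psi_0\|_{H^N}$ for smooth small data. Then the energy estimate for the linearized Schr\"odinger equation from Section~\ref{Sec-Energy}, combined with the $H^{-1}$ elliptic bounds for the linearized elliptic system established in Section~\ref{Ellip-sec}, allows one to propagate differences of nearby solutions in a weak norm. Interpolating in a paradifferential manner between the high $H^N$ bound and this weak linearized bound extracts the desired $H^{s_d}$ control without ever constructing an explicit fractional energy. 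The Hadamard-type well-posedness in Theorem~\ref{small-MSS-LWP} then transfers the global bound from the dense class of smooth data to arbitrary $H^{s_d}$ data by approximation.

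With \eqref{psi-full-reg} established globally, the scattering statement \eqref{Scatter} is comparatively soft. Rewriting \eqref{mdf-Shr-sys-2} as $i\partial_t \psi + \Delta \psi = \mathcal{N}(\psi, \lambda, h, V, A, B)$, the nonlinearity $\mathcal{N}$ is at least quadratic in $(\psi, \lambda)$ and depends on $(h, V, A, B)$ only through factors which are themselves controlled elliptically by $\psi$. The Strichartz bounds on $\psi$ together with the $L^2_t$ control on $\lambda$ place $\mathcal{N}$ in a dual endpoint Strichartz space at the level $H^{s_d - 2}$, and Duhamel's formula then shows that $e^{-it\Delta}\psi(t)$ is Cauchy in $H^{s_d - 2}$ as $t \to \pm\infty$, yielding the scattering states $\psi_{\pm}$. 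The loss of two derivatives matches the role of $\sigma_d = s_d - 2$ in Proposition~\ref{impbd-1} and is a manifestation of the quasilinear character of the problem.
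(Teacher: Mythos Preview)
Your proposal is correct and follows essentially the same approach as the paper: a bootstrap (Proposition~\ref{bootstrap-prop}) closed by Proposition~\ref{impbd-1} on the Strichartz side and, on the energy side, integer $H^N$ energies from Proposition~\ref{En-prop} together with the $H^{-1}$ linearized estimate of Proposition~\ref{Diff-est}, interpolated to recover $H^{s_d}$; scattering then follows from Duhamel and the $L^2_t W^{s_d-2,r_d}$ bound on the nonlinearity. The only implementation detail you leave implicit is that the paper carries out the interpolation step concretely via frequency-envelope regularizations $\psi_0^{(k)}=S_{\leq k}\psi_0$, an induction on $k$, and a telescopic sum against the $J$-method characterization of $H^{s_d}$ in Section~\ref{sec-proof}.
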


This result is achieved by the following bootstrap proposition and continuity method.
\begin{prop}[Bootstrap proposition]   \label{bootstrap-prop}
		Let $s_d$, $r_d$ be as \eqref{Reg-index} and \eqref{r_d} respectively for $d\geq 4$.  Assume that $(\psi,\la,\SS)$ is a solution to \eqref{mdf-Shr-sys-2} and \eqref{ell-syst} on some time interval $[0,T]$, $T\geq 1$, with initial data satisfying the smallness assumption \eqref{ini-data}. Assume also that the solution satisfies the bootstrap hypothesis
		\begin{equation}\label{prop-Ass1}
		\|\psi\|_{S[0,T]}+\| \psi\|_{L_T^\infty H^{s_d}_x}\leq  C_0\|\psi_0\|_{H^{s_d}}.
		\end{equation}
	    Then the following improved bound holds:
		\begin{equation}\label{prop-result}
		\|\psi\|_{S[0,T]}+\| \psi\|_{L_T^\infty H^{s_d}_x}\leq  \frac{C_0}{2}\|\psi_0\|_{H^{s_d}},
		\end{equation}
		where $C_0>1$ is a large universal constant.
\end{prop}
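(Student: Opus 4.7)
The plan is to close the bootstrap by combining the Strichartz estimate of Proposition~\ref{impbd-1} with the energy estimate of Proposition~\ref{En-prop}, using the bootstrap hypothesis \eqref{prop-Ass1} to make all nonlinear corrections small. The strategy is essentially circular: the Strichartz norm $S[0,T]$ controls $\|\lambda\|_{L^2_T L^\infty_x}$, which in turn drives the Gronwall factor in the energy estimate; while the $L^\infty_T H^{s_d}_x$ energy control is needed a priori to apply the Strichartz bound.

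First I would handle the Strichartz piece. Directly applying Proposition~\ref{impbd-1} under \eqref{prop-Ass1}, and observing that $\sigma_d+\frac{d-2}{2(d-1)} = s_d - 2 + \frac{d-2}{2(d-1)} < s_d$ by the definition \eqref{Reg-index} of $s_d$, yields
\begin{equation*}
\|\psi\|_{S[0,T]} \leq C_2 \|\psi_0\|_{H^{s_d}} + C_2(C_0\ep_0)^2 \|\psi\|_{S[0,T]}.
\end{equation*}
For $\ep_0$ small enough that $C_2 C_0^2 \ep_0^2 \leq 1/2$, the last term is absorbed, giving $\|\psi\|_{S[0,T]} \leq 2C_2 \|\psi_0\|_{H^{s_d}}$.

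Next, I would convert this Strichartz bound into control of $\|\lambda\|_{L^2_T L^\infty_x}$. By the fixed-time elliptic estimates of Section~\ref{Ellip-sec}, $\lambda$ is controlled by $\psi$ at the same regularity; combined with the Sobolev embedding $W^{s_d-2,r_d} \hookrightarrow L^\infty$, which is valid because $s_d - 2 > d/r_d = (d-2)^2/(2(d-1))$ in view of \eqref{Reg-index} (with the $L^2 W^{1,4}$ component of $S[0,T]$ providing the needed slack when $d=4$), this gives
\begin{equation*}
\int_0^T \|\lambda\|_{L^\infty_x}^2 \, ds \lesssim \|\psi\|_{S[0,T]}^2 \lesssim (C_0\ep_0)^2.
\end{equation*}
If $s_d$ were an integer, the integer energy estimate \eqref{E-psi} would immediately yield $\|\psi(t)\|_{H^{s_d}}^2 \leq C_1^2 \exp(C_E C_0^2 \ep_0^2)\|\psi_0\|_{H^{s_d}}^2 \leq 2 C_1^2\|\psi_0\|_{H^{s_d}}^2$, which combined with the Strichartz bound above closes \eqref{prop-result} provided $C_0 > 4(C_1 + C_2)$.

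The main obstacle is that $s_d$ in \eqref{Reg-index} need not be an integer, whereas Proposition~\ref{En-prop} constructs the good energy functional $E^k$ only at integer levels. To bridge this gap I would follow the strategy signaled in the remarks after Proposition~\ref{En-prop} and in the outline of Section~5: perform a Littlewood--Paley decomposition $\psi = \sum_j \psi_j$ and run the linearized-equation energy estimate of Section~\ref{Sec-Energy} on each frequency piece. Since the linearized coefficients depend only on $(\lambda,\SS)$, which are again controlled pointwise in time by $\lambda$ via the elliptic system, the same $L^2_T L^\infty_x$ control on $\lambda$ drives Gronwall on each dyadic block, and summing with weight $2^{2js_d}$ in $\ell^2$ yields a paradifferential-type energy bound $\|\psi(t)\|_{H^{s_d}} \leq 2C_1 \|\psi_0\|_{H^{s_d}}$ at the non-integer level $s_d$. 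This is carried out by first obtaining the integer bound at some $N > s_d$ (via approximation by smooth data and Theorem~\ref{small-MSS-LWP}), and then interpolating via the linearized estimate; the hard part is showing that the frequency envelope propagates, which requires carefully matching the parabolic/elliptic estimates of Section~\ref{Ellip-sec} with the paradifferential decomposition. Combining the two improved bounds yields \eqref{prop-result}, and the scattering in Theorem~\ref{GWP-MSS-thm} then follows by a standard Duhamel argument using the finiteness of $\|\psi\|_{S(\R)}$ to pass to the limit $t \to \pm \infty$ in $H^{s_d-2}$.
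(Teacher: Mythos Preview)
Your proposal is essentially correct and follows the paper's strategy: close the Strichartz bound via Proposition~\ref{impbd-1}, convert to control of $\|\lambda\|_{L^2_T L^\infty_x}$ via the elliptic estimates of Section~\ref{Ellip-sec}, and then run Gronwall on the energy.

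One clarification on the non-integer $s_d$ step, where your description is slightly off. You propose to Littlewood--Paley decompose the \emph{solution} $\psi=\sum_j\psi_j$ and apply the linearized estimate to each piece; but $S_j\psi$ does not satisfy the linearized equation, so this would require a separate paradifferential/commutator analysis not carried out in Section~\ref{Sec-Energy}. The paper instead regularizes the \emph{initial data}, $\psi_0^{(k)}=S_{\le k}\psi_0$, solves the full nonlinear problem for each $\psi^{(k)}$, and telescopes: the differences $\psi^{(k+1)}-\psi^{(k)}$ are controlled in $H^{-1}$ via Proposition~\ref{Diff-est}, each $\psi^{(k)}$ is controlled at integer level $H^N$ via \eqref{E-psi} and \eqref{high-ini}, and the fractional $H^{s_d}$ bound is then assembled from these two ingredients through the interpolation characterization \eqref{Eq-Hs}. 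You do allude to this mechanism correctly at the end (``approximation by smooth data\dots frequency envelope propagates''), so the right ingredients are all present; just be aware that the pieces being summed are differences of \emph{nonlinear solutions} from truncated data, not frequency projections of a single solution.
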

In the remaining sections, we will focus on the proof of this proposition,
which splits in a modular fashion into an energy component and a Strichartz
component, as in Propositions~\ref{En-prop}, \ref{impbd-1} in the introduction.

\bigskip

\section{Function spaces and notations}   \label{Sec3}
We define the function spaces for the study of global solutions to the system \eqref{mdf-Shr-sys-2}-\eqref{ell-syst}.
First we introduce the usual Sobolev spaces $W^{s,p},\ H^s$ and the intrinsic Sobolev spaces $\mathsf H^k$ for tensors on $(\mathbb R^d; g)$. The gauge independent intrinsic norms  $\mathsf H^k$ are used in the energy estimates. Then we state a equivalence relation between the $H^k$ and $\mathsf H^k$ norms in the harmonic/Coulomb gauge.

For a function $u(t,x)$ or $u(x)$, let $\hat{u}=\FF u$ denote the Fourier transform in the spatial variable $x$. Fix a smooth radial function $\varphi:\R^d \rightarrow [0,1] $ supported in $\{x\in\R^d:|x|\leq 2\}$ and equal to 1 in $\{x\in\R^d:|x|\leq 1\}$, and for any $i\in \Z$, let
\begin{equation*}
	\varphi_i(x):=\varphi(x/2^i)-\varphi(x/2^{i-1}).
\end{equation*}
We then have the spatial Littlewood-Paley decomposition,
\begin{equation*}
	\sum_{i=-\infty}^{\infty}P_i (D)=1, \quad \sum_{i=0}^{\infty}S_i (D)=1,
\end{equation*}
where we use the differential operator $D:=\frac{1}{\sqrt{-1}}\d_x$ and $P_i$ localizes to frequency $2^i$ for $i\in \Z$, i.e,
\begin{equation*}
	 \FF(P_iu)=\varphi_i(\xi)\hat{u}(\xi).
\end{equation*}
and
\[S_0(D)=\sum_{i\leq 0}P_i(D),\quad S_i(D)=P_i(D),\ {\rm for}\ i>0.\]
For simplicity of notation, we set
\[
u_j=S_j u,\quad u_{\leq j}=\sum_{i=0}^j S_i u,\quad u_{\geq j}=\sum_{i=j}^{\infty} S_i u.
\]

We denote $W^{s,p}$ for $1\leq p\leq \infty$ as the usual Sobolev spaces, and denote $H^s:=W^{s,2}$.  
For simplicity of notation, on some time interval $[0,T]$, we define
\[
 \|u\|_{L^r_T W^{s,p}} = \|u\|_{L^r(0,T; W^{s,p})},\quad 1\leq r\leq \infty.
\]
For the solution $\psi$ of Schr\"odinger equation in \eqref{mdf-Shr-sys-2} and the related second fundamental form $\la$, we will be working primarily in $L^\infty_T H^{s_d}\cap S[0,T]$ for $s_d>\frac{d}{r_d}+2$. Here $S[0,T]$ are the Strichartz spaces defined by \eqref{S4} and \eqref{S5}. For convenience, corresponding to the $L^2_t$ component of the Strichartz norms, we also define the Sobolev norm at fixed time as
\begin{equation*}
    \|\psi\|_{\mathbf{str}}:=\|\psi\|_{W^{1,4}}+\|\psi\|_{W^{s_d-2,r_d}},\quad \text{for}\ d=4,
\end{equation*}
and 
\begin{equation*}
    \|\psi\|_{\mathbf{str}}:=\|\psi\|_{W^{s_d-2,r_d}},\quad \text{for}\ d\geq 5.
\end{equation*}
For the elliptic system \eqref{ell-syst}, at a fixed time we define the $\mathcal{H}^s$ norm as
\begin{equation*}
    \lV (h,V,A,B)\rV_{\mathcal H^s}=\lV|D|h\rV_{H^{s+1}} +\lV|D|V\rV_{H^{s}}+\lV A\rV_{H^{s+1}}+\lV|D|B\rV_{H^{s-1}}.
\end{equation*}

Next, we define the intrinsic Sobolev spaces $\mathsf H^k$ for integer $k\in \mathbb Z$. Since the Schr\"odinger equation \eqref{mdf-Shr-sys-2} is a quasilinear equations with variable coefficients $g$, we will use the space $\mathsf H^k$ to derive its energy estimates later. Let $g$ be a Riemannian metric on $\R^d$, and $A_\ga$ be a magnetic potential.
For any complex tensor $T=T^{\al_1\cdots\al_r}_{\be_1\cdots\be_s}dx^{\beta_1}\otimes...dx^{\beta_s}\otimes\frac{\partial }{\partial x^{\alpha_1}}\otimes...\otimes\frac{ \partial }{\partial x^{\alpha_r}}$, the covariant derivative is defined by
\[\nab^A_\ga T=\nab_\ga T+iA_\ga T,\]
where 
\begin{align}   \label{co_d}
\nab_\ga T^{\al_1\cdots\al_r}_{\be_1\cdots\be_s}=\d_\ga T^{\al_1\cdots\al_r}_{\be_1\cdots\be_s}+\sum_{i=1}^r \Ga^{\al_i}_{\ga\si} T^{\al_1\cdots\al_{i-1}\si\al_{i+1}\cdots\al_r}_{\be_1\cdots\be_s}-\sum_{j=1}^s \Ga^{\si}_{\ga\be_{j}} T^{\al_1\cdots\al_r}_{\be_1\cdots\be_{j-1}\si\be_{j+1}\cdots\be_s}.
\end{align}
We have
\begin{align*}
|\nab^A T|^2_g=g_{\al_1\al'_1}\cdots g_{\al_r\al'_r} g^{\be_1\be'_1}\cdots g^{\be_s\be'_s} \nab^A_\ga T^{\al_1\cdots\al_r}_{\be_1\cdots\be_s}\overline{\nab^{A,\ga} T^{\al'_1\cdots\al'_r}_{\be'_1\cdots\be'_s}}.
\end{align*}
Then the intrinsic Sobolev norm $\mathsf H^k$ for nonnegative integer $k\in \mathbb N$ is defined by
\begin{equation*}   
\|T\|_{\mathsf H^k}=\Big(\sum_{l=0}^k \int_\Sigma|\nab^{A,l} T|_g^2 d\mu\Big)^{1/2},
\end{equation*}
where volume form is $d\mu=\sqrt{\det g}dx$ and $\nab^{A,l}$ is the $l$-th order covariant derivative. By duality, we can also define the negative intrinsic Sobolev spaces as
\begin{equation*}
\|T\|_{\mathsf H^{-k}}=\sup_{\|U\|_{\mathsf H^k}\leq 1} \<T,U\>_{L^2}.
\end{equation*}

Under a suitable smallness assumption on the metric $h$ and the magnetic 
field $A$ we have the following equivalence relation between $H^k$ and $\mathsf H^k$ for a range of integers $k$.
\begin{lemma}
	Let $d\geq 3$ and $s>\frac{d}{2}$. Assume that $\|\d_x h\|_{H^{s-1}}+\|A\|_{H^{s-1}}\leq \ep$. Then for any integer $0\leq k\leq s$ we have the equivalent relation
	\begin{equation}       \label{equ-rlt}
	\| T\|_{\mathsf  H^k}\approx \|T\|_{H^k}.
	\end{equation}
\end{lemma}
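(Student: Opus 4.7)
The plan is to argue by induction on $k$ from $0$ up to $s$, viewing the covariant derivative $\nabla^A=\nabla+iA$ as a perturbation of the flat partial derivative. Writing $g=I_d+h$ with $\|\d_x h\|_{H^{s-1}}\le\ep$, the Christoffel symbols $\Gamma\sim g^{-1}\d h$ lie in $H^{s-1}$ with norm $\lesssim\ep$, and $A$ likewise by hypothesis. Since $s>d/2$, Sobolev embedding gives smallness of $h$ in $L^\infty$, which will be used both for the pointwise comparison of the metric contractions and for the multiplicative estimates.

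For the base case $k=0$, the bounds $\|h\|_{L^\infty}\lesssim\ep$ yield $g\approx I_d$ pointwise, so the full metric contractions $g_{\al_i\al'_i}\cdots g^{\be_j\be'_j}$ that define $|T|_g$ are uniformly comparable to the flat contractions, and the volume factor $\sqrt{\det g}=1+O(\ep)$. This gives $\|T\|_{\mathsf H^0}\approx\|T\|_{L^2(dx)}$. For the inductive step, I iterate \eqref{co_d} to obtain the schematic expansion
\begin{equation*}
\nabla^{A,k}T=\d^k T+\sum_{j=0}^{k-1}\sum_{\vec m}c_{\vec m}\,\d^{m_1}\mathcal C\cdots\d^{m_p}\mathcal C\cdot\d^j T,
\end{equation*}
where each $\mathcal C$ stands for a component of $\Gamma$ or $A$, $p\ge 1$, and the multi-index satisfies $\sum_i(m_i+1)+j=k$. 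Equivalently, each remainder term has at least one $\mathcal C$-factor and a total derivative count of $k$, of which at most $k-1$ fall on $T$. Using this identity together with the base-case equivalence of $L^2$ norms, the key step is to bound each remainder in $L^2(dx)$ by $\ep\,\|T\|_{H^k}$; combined with the inductive hypothesis at level $k-1$, this gives $\|T\|_{\mathsf H^k}\le(1+C\ep)\|T\|_{H^k}$, and then the reverse bound follows by solving the schematic identity for $\d^k T$ and absorbing the $\ep$-small remainder on the right.

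The main work, and the main potential obstacle, is the uniform verification of the multilinear product estimate
\begin{equation*}
\|\d^{m_1}\mathcal C\cdots\d^{m_p}\mathcal C\cdot\d^j T\|_{L^2(dx)}\lesssim(\|\Gamma\|_{H^{s-1}}+\|A\|_{H^{s-1}})^p\,\|T\|_{H^k},
\end{equation*}
subject to $\sum_i(m_i+1)+j=k\le s$ and $p\ge 1$. This is a standard consequence of iterated Kato--Ponce/Moser inequalities, coupled with Littlewood--Paley paraproduct decomposition: for each bilinear sub-estimate, the low-frequency factor is placed in $L^\infty$ via Sobolev embedding using the spare $H^{s-1}$ regularity of $\Gamma$ or $A$ (allowed since one derivative from the covariant structure is always available and $s>d/2$), while the high-frequency factor carries the full Sobolev norm. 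The condition $k\le s$ ensures that all derivative counts remain within the range where these estimates close, and $p\ge 1$ produces the overall factor $\ep$ needed to absorb the remainder. The inequality \eqref{equ-rlt} is then obtained by combining this with the base case and the inductive hypothesis.
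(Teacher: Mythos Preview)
Your proposal is correct and follows essentially the same route as the paper: expand $\nabla^{A,k}T$ schematically as $\partial^k T$ plus remainder terms built from products of $\partial^{m_i}(\Gamma+iA)$ and $\partial^j T$, then bound each remainder by $\ep\|T\|_{H^k}$ and absorb. The only organizational difference is that the paper isolates the bilinear claim $\|\Gamma T\|_{H^{k'-1}}\lesssim\|\d_x h\|_{H^{s-1}}\|T\|_{H^{k'}}$ and proves it by an explicit Littlewood--Paley paraproduct computation, then iterates in the number $n$ of $\Gamma$-factors, whereas you induct on $k$ and invoke the multilinear bound as a standard Kato--Ponce/Moser consequence; these are equivalent packagings of the same estimate.
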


\begin{proof}
	By covariant derivative \eqref{co_d}, schematically we write  $\nab^{A,k} T$ as
	\begin{equation}        \label{nabT=dT}
	\nab^{A,k} T=\partial^k T+\sum_{1\leq n\leq k}\ \sum_{l_1+\cdots+l_{n+1}= k-n}\d^{l_1}(\Ga+iA)\cdots \d^{l_n}(\Ga+iA)\cdot \d^{l_{n+1}}T.
	\end{equation}
	Then by the smallness of $\d_x h$ we have
	\begin{equation}\label{eq-bd1}
	\begin{aligned}
	&\|\nab^k T \|_{L^2(d\mu)}
	\lesssim  \||\nab^k T|_g\|_{L^2(dx)}\\
	\lesssim &\|\d^k T\|_{L^2}+\sum_{1\leq n\leq k}\ \sum_{l_1+\cdots+l_{n+1}= k-n}\|\d_x^{l_1}(\Ga+iA)\cdots \d_x^{l_n}(\Ga+iA)\cdot \d_x^{l_{n+1}}T\|_{L^2}.
	\end{aligned}
	\end{equation}
	In order to bound the second term above, it suffices to bound $\Ga^n T$ in $H^{k-n}$. For this we claim that
	\begin{align}        \label{claim}
	\| \Ga T \|_{H^{k'-1}}\lesssim \|\d_x h\|_{H^{s-1}}\|T\|_{H^{k'}}, \quad \text{ for any }1\leq k'\leq k.
	\end{align}
	Then by induction we have
	\begin{align*}
	\| \Ga^n T \|_{H^{k-n}}\lesssim \|\d_x h\|_{H^{s-1}} \| \Ga^{n-1} T \|_{H^{k-n+1}} \lesssim  \|\d_x h\|_{H^{s-1}}^n\|T\|_{H^{k}}.
	\end{align*}
	This combined with \eqref{eq-bd1} and the smallness of $\d_x h$ and $A$ in $H^{s-1}$ implies
	\begin{align*}
	\| T\|_{\mathsf H^k}\lesssim \|T\|_{H^k}+\ep \|T\|_{H^k}\lesssim \|T\|_{H^k}.
	\end{align*}
	
	We now return to prove the claim \eqref{claim}. Using a Littlewood-Paley decomposition and Bernstein's inequality we have
	\begin{align*}
	\| S_j(\Ga T) \|_{H^{k'-1}}\lesssim & \sum_{0\leq j_1\leq j}2^{(k'-1)j} \|\Ga_j\|_{L^2} 2^{dj_1/2} \|T_{j_1}\|_{L^2}+\sum_{0\leq j_1\leq j} 2^{dj_1/2} \|\Ga_{j_1}\|_{L^2} 2^{(k'-1)j}  \|T_{j}\|_{L^2}\\
	&+\sum_{j_1> j} 2^{(d/2+k'-1)(j-j_1)}  \|\Ga_{j_1}\|_{H^{d/2-1}}   \|T_{j_1}\|_{H^{k'}}\\
	:=&I_{1j}+I_{2j}+I_{3j}.
	\end{align*}
	For the first term we have
	\begin{align*}
	I_{1j}\lesssim &\textbf{1}_{<d/2}(k')\sum_{0\leq j_1\leq j}2^{(d/2-1+\de)j} \|\Ga_j\|_{L^2} 2^{(d/2-k'+\de)(j_1-j)} 2^{k' j_1}\|T_{j_1}\|_{L^2}\\
	&+\textbf{1}_{\geq d/2}(k')\sum_{0\leq j_1\leq j}2^{(k'-1+\de)j} \|\Ga_j\|_{L^2} 2^{(d/2-k'-\de)j_1} 2^{k' j_1}\|T_{j_1}\|_{L^2}\\
	\lesssim & \|\Ga_j\|_{H^{s-1}}\| T\|_{H^{k'}}.
	\end{align*}
	For the second term we have
	\begin{align*}
	I_{2j}\lesssim \sum_{0\leq j_1\leq j} 2^{j_1-j} 2^{(d/2-1)j_1} \|\Ga_{j_1}\|_{L^2} 2^{k'j}  \|T_{j}\|_{L^2}
	\lesssim \|\Ga\|_{H^{s-1}}\| T_j\|_{H^{k'}}.
	\end{align*}
	The last term $I_{3j}$ is bounded by
	\begin{align*}
	    I_{3j} \lesssim \sum_{j_1>j}2^{(d/2+k'-1)(j-j_1)}\|\Ga_{j_1}\|_{H^{d/2-1}}\|T\|_{H^{k'}}.
	\end{align*}
	Hence, these give
	\begin{align*}
	\|\Ga T\|_{H^{k'-1}}\lesssim & (\sum_{j\geq 0} \| S_j(\Ga T) \|_{H^{k'-1}}^2)^{1/2}\lesssim \|\Ga\|_{H^{s-1}}\| T\|_{H^{k'}}\lesssim \|\d_x h\|_{H^{s-1}}\| T\|_{H^{k'}}.
 	\end{align*}
 	Then the claim \eqref{claim} is obtained.

	Conversely, by \eqref{nabT=dT} we also have
	\begin{align*}
	\|\d^k T\|_{L^2}\lesssim &\|\nab^{A,k} T\|_{L^2}+\sum_{1\leq n\leq k}\ \sum_{l_1+\cdots+l_{n+1}= k-n}\|\d^{l_1}(\Ga+iA)\cdots \d^{l_n}(\Ga+iA)\cdot \d^{l_{n+1}}T\|_{L^2}\\
	\lesssim &\||\nab^{A,k} T|_g\|_{L^2(d\mu)}+\ep\| T\|_{H^k}.
	\end{align*}
	This implies
	\begin{align*}
	\| T\|_{H^k}\lesssim \|T\|_{\mathsf H^k}+\ep\| T\|_{H^k}\lesssim \|T\|_{\mathsf H^k},
	\end{align*}
which completes the proof of the lemma.
\end{proof}

\bigskip

\section{Elliptic estimates}\label{Ellip-sec}

In this section, we consider the elliptic system \eqref{ell-syst}.
Its solvability was already considered in \cite{HT} under the assumption
that $\psi$ is small in $H^s$. Here we prove some additional space-time bounds for $(\la,\SS)$, which are adapted to the Strichartz norm 
we will use later on. We begin by recalling the solvability  result in \cite{HT}:.

\begin{thm}[\cite{HT}, Theorem 4.1]\label{t:ell-fixed-time}
	Assume that $\psi$ is small in $H^s$ for $s > d/2$ and $d \geq 4$. Then the elliptic system \eqref{ell-syst} admits a unique small solution $(\la,\SS)$ in $\mathcal H^s$,
	with
	\begin{equation}   \label{ell-bd}
	\|(\la,\SS)\|_{H^s\times\mathcal H^s} \lesssim \| \psi\|_{H^s}.
	\end{equation}
	Moreover, for the linearization of the solution map above we also have the bound:
	\begin{align}\label{ell-lin}
	    \|(\la_{lin},\SS_{lin})\|_{H^{\si}\times\mathcal H^{\si}} \lesssim \| \psi_{lin}\|_{H^{\si}},\quad \si \in (\frac{d}{2}-3,s].
	\end{align}
\end{thm}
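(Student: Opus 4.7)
The plan is to treat \eqref{ell-syst} as a coupled elliptic system on $\R^d$ whose principal parts are perturbations of the flat Laplacian (or, in the cases of $\la$ and $A$, of the flat div-curl system), and to solve it by a fixed point iteration in the small ball of $H^s \times \mathcal H^s$. Writing $g = I_d + h$ with $h$ small, each of the equations in \eqref{ell-syst} can be rearranged into the schematic form $\De u = N(u,\psi,\la,\SS)$, where $\De$ is the flat Laplacian on $\R^d$, the terms built from $h$ are absorbed in $N$ as lower order perturbations, and $N$ is at least quadratic jointly in $(\psi,\la,\SS)$. For the metric equation one subtracts $\De h$ from both sides of \eqref{g-eq-original}; for $V,B$ one uses \eqref{Ellp-X} and \eqref{Ellip-B} directly; for $A$ one applies $\d_\be$ to \eqref{Ellp-A} as in \eqref{Ellp-A1}; and for $\la$ one combines the two identities in \eqref{div-curl-la} into a first-order Hodge system, which yields $\|\la\|_{H^{s+1}} \lesssim \|\psi\|_{H^{s+1}}+(\text{l.o.t.})$ by standard elliptic theory.

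With this setup, the contraction is straightforward. Given $\psi\in H^s$ small and an iterate $(\la^{(n)},\SS^{(n)})$ in a small ball of $H^s\times\mathcal H^s$, define $(\la^{(n+1)},\SS^{(n+1)})$ by solving the uncoupled flat problems with right-hand sides evaluated at the $n$-th iterate. The flat Laplacian is invertible on the appropriate homogeneous Sobolev spaces (using the boundary conditions at infinity $\la(\infty)=0$, $h(\infty)=0$, etc.), and Moser/Kato-Ponce product estimates combined with Sobolev embedding (using the hypothesis $s > d/2$) give
\begin{equation*}
\|N\|_{H^s} \lesssim \bigl(\|\la\|_{H^s}+\|\SS\|_{\mathcal H^s}+\|\psi\|_{H^s}\bigr)\cdot\bigl(\|\la\|_{H^s}+\|\psi\|_{H^s}\bigr),
\end{equation*}
which suffices to close the contraction after choosing $\|\psi\|_{H^s}$ small enough, yielding \eqref{ell-bd}. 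One has to be a little careful about the ordering of the iterations in view of the bidirectional coupling between $\la$ and $A$ (each Codazzi relation involves $A$, while $A$ solves an equation forced by $\la\cdot\la$); however, since the coupling is quadratic in the small variables, treating the pair $(\la,A)$ as a single block and iterating jointly against $(h,V,B)$ is already enough.

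For the linearized bound \eqref{ell-lin}, formally differentiate \eqref{ell-syst} in $\psi$ to obtain a linear elliptic system in $(\la_{lin},\SS_{lin})$ of the same principal type, with forcing linear in $\psi_{lin}$ and with coefficients depending on the background $(\la,\SS)$. For $\sigma \in [0,s]$ the argument is essentially identical to the nonlinear step. The content of \eqref{ell-lin} lies in pushing $\sigma$ down to the range $\sigma > d/2-3$: here one works by duality, using that the leading parts of the elliptic operators are self-adjoint and that the off-diagonal coupling terms (schematically $\la\cdot\la_{lin}$, $\psi\cdot\la_{lin}$, and $h\cdot\partial^2 u_{lin}$) can be estimated using the background bounds from \eqref{ell-bd} and the product rule $H^{\sigma-1}\cdot H^{s-1}\hookrightarrow H^{\sigma-1}$, which is exactly the regime requiring $\sigma-1 > d/2-4$, i.e.\ the stated threshold.

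The main technical obstacle is the low-regularity linearized estimate. One cannot rely on the standard positive-index product law directly; instead, one must exploit that the coupling between the different components of $(\la_{lin},\SS_{lin})$ is always through a factor of the background $(\la,\SS)$, which is itself small in $H^s$. Thus a paradifferential-style decomposition where high-high and low-high interactions are distinguished, combined with duality against $H^{-\sigma}$, reduces the problem to the already proven positive-index bound for the dual system. The restriction $\sigma > d/2-3$ emerges sharply from this last step, reflecting the two degrees of regularity gained by the Laplacian and the one degree of regularity that must be spent on the quadratic nonlinearity.
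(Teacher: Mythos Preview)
This theorem is not proved in the present paper; it is quoted verbatim from \cite{HT}, so there is no ``paper's own proof'' here to compare against. Your sketch is the natural strategy and is essentially what \cite{HT} does: rewrite each equation in \eqref{ell-syst} as a flat elliptic problem with quadratic right-hand side, close a contraction in $H^s\times\mathcal H^s$ using algebra/product estimates (valid since $s>d/2$), and then obtain the linearized bound by differentiating the fixed point map.

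Two small imprecisions worth flagging. First, the div-curl system \eqref{div-curl-la} places $\la$ and $\psi$ at the \emph{same} Sobolev level, so the correct statement is $\|\la\|_{H^s}\lesssim\|\psi\|_{H^s}+\text{(small)}\cdot\|\la\|_{H^s}$, not the $H^{s+1}\to H^{s+1}$ bound you wrote. Second, your justification of the threshold $\sigma>\tfrac{d}{2}-3$ via ``$H^{\sigma-1}\cdot H^{s-1}\hookrightarrow H^{\sigma-1}$'' is not the right product law: with only $s>d/2$ one cannot use $H^{s-1}$ as a multiplier algebra. The actual constraint comes from matching the weakest component of the $\mathcal H^\sigma$ norm (namely $\||D|B\|_{H^{\sigma-1}}$) against the worst quadratic source terms after inverting the Laplacian, and from the low-frequency behavior of $\De^{-1}$ on $\R^d$; working this out carefully is where the number $\tfrac{d}{2}-3$ appears. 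Your duality/paraproduct outline for the negative-index range is the right idea, but the specific product estimate you invoke needs to be replaced by the correct one.
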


Here we will supplement the above result with an an additional set of  estimates:
\begin{lemma}\label{Lt2-lem}
Let
\begin{equation*}
\frac{d}{r_d}< \si\leq s_d-2, \qquad \si_1\leq s_d+2-\frac{d(d-2)}{2(d-1)}. 
\end{equation*}
Let $\psi$ be defined in the interval $[0,T]$ and satisfying the hypothesis of Proposition \ref{bootstrap-prop}.
Then  we have
\begin{equation}    \label{L2t-SS}
\|\la\|_{S[0,T]}+\|h\|_{L^2_T W^{\si_1,2(d-1)}}+\| (0,V,A,B)\|_{L^2_T\HH^{s_d}}\lesssim \|\psi\|_{S[0,T]}.
\end{equation}
and 
\begin{equation}    \label{L1t-V}
	\|\d_x V\|_{L^1_TW^{\si,r_d}}+\|\d_x^2 V\|_{L^1_T L^d}\lesssim \|\psi\|_{S[0,T]}^2.
\end{equation}
In addition, in dimension $d=4$ we have
\begin{align}          \label{L2t-hW14}
    \|h\|_{L^2_TW^{1,4}}\lesssim \|\psi\|_{L^2_TW^{1,4}}\|\psi\|_{L^\infty_T H^{1}}.
\end{align}
\end{lemma}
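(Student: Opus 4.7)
The plan is to leverage the fixed-time elliptic theory of Theorem~\ref{t:ell-fixed-time} together with H\"older inequalities in time in order to convert the space-time Strichartz bound on $\psi$ into space-time bounds on the auxiliary quantities $(\la,\SS)$. Throughout, the fixed-time small norms in $\mathcal{H}^{s_d}$ provided by Theorem~\ref{t:ell-fixed-time} will be treated as perturbative factors carried in $L^\infty_t$, and each elliptic equation in \eqref{ell-syst} will be inverted in the standard Calder\'on--Zygmund sense, with the leading order right-hand side producing the main contribution and the nonlinear corrections absorbed by the smallness of $\psi$.

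The first step is to bound $\la$ in $S[0,T]$. From the div-curl system \eqref{div-curl-la}, one schematically has
\[
\la = |D|^{-1}\nabla \psi + L(h,A,\Ga)\cdot \la,
\]
where the second term is a bilinear correction containing the perturbatively small fields $h, A$. Since $|D|^{-1}\nabla$ is a bounded zeroth-order operator on $W^{1,4}$ and $W^{s_d-2,r_d}$, the leading order yields $\|\la\|_{S[0,T]}\lesssim \|\psi\|_{S[0,T]}$, while the correction is absorbed by placing $h, A$ in $L^\infty_t H^{s_d}$ and $\la$ in $S[0,T]$, using the product estimates $H^{s_d-1}\cdot W^{s_d-2,r_d}\hookrightarrow W^{s_d-2,r_d}$ valid under $s_d > d/r_d + 2$.

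For $h$, the elliptic equation for the metric in \eqref{ell-syst} takes the schematic form
\[
\De h = \la \cdot \bar{\psi} + \la \cdot \bar{\la} + \partial h \cdot \partial h + \partial h\cdot \partial g^{-1},
\]
so that
\[
h \sim |D|^{-2}\bigl(\la\cdot \psi\bigr) + |D|^{-2}(\text{quadratic in }\partial h).
\]
Placing $\la$ in $L^\infty_t H^{s_d}$ and $\psi$ in the Strichartz space $L^2_t W^{s_d-2,r_d}$, by H\"older in time and the Sobolev product rule, the bilinear $\la\cdot \psi$ lies in $L^2_t H^{\tau}$ for a suitable index $\tau$; applying $|D|^{-2}$ and using Sobolev embedding $H^{\tau+2}\hookrightarrow W^{\sigma_1,2(d-1)}$ with $\tau + 2 - \sigma_1 = \tfrac{d}{2}-\tfrac{d}{2(d-1)} = \tfrac{d(d-2)}{2(d-1)}$ gives precisely the stated threshold $\sigma_1 \leq s_d + 2 - \tfrac{d(d-2)}{2(d-1)}$. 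The quadratic $\partial h$ terms are handled by a bootstrap using the smallness of $\partial h$ in $L^\infty_t H^{s_d}$ from Theorem~\ref{t:ell-fixed-time}. For $V, A, B$, the corresponding equations in \eqref{ell-syst} are of the form $\De(V,A,B)=\text{bilinear in }(\la,\psi,\SS)$; placing the Strichartz factor in $L^2_t$ and the remaining small factor in $L^\infty_t H^{s_d}$ delivers the $L^2_T \HH^{s_d}$ bound, while for \eqref{L1t-V} one places \emph{both} factors in the Strichartz space to obtain the $L^1_t$ integrability with spatial regularity determined again by Sobolev embedding (restricting $\sigma$ to $\sigma > d/r_d$ so as to stay inside the available Strichartz norm).

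The main obstacles are two numerological ones. The first is matching exponents in the $h$ estimate: the inhomogeneous Strichartz exponent $r_d$, the target Lebesgue exponent $2(d-1)$, and the two derivatives gained from $|D|^{-2}$ must all be balanced against the dimension $d$, which is what forces the precise value of $\sigma_1$. The second is the dimension $d=4$ case \eqref{L2t-hW14}, which must be treated separately because $r_4 = 12$ does not by itself reach $L^4$ via $|D|^{-2}$ at the natural regularity; instead, one exploits the extra $L^2_T W^{1,4}$ component in the definition of $S[0,T]$ for $d=4$ by writing $\De h \sim \psi\cdot \psi + \ldots$ and placing one $\psi$ in $L^2_T W^{1,4}$ and the other in $L^\infty_T H^1\hookrightarrow L^\infty_T L^4$, so that $\psi\cdot \psi \in L^2_T W^{-1,2}$ and $h\in L^2_T W^{1,4}$ after inverting $|D|^2$ and applying the four-dimensional Sobolev embedding $W^{3,2}\hookrightarrow W^{1,4}$ on the bilinear source.
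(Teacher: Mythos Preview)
Your proposal is correct and takes essentially the same approach as the paper: each elliptic equation in \eqref{ell-syst} is inverted with one quadratic factor placed in the Strichartz norm and the other in $L^\infty_t H^{s_d}$ (or both in Strichartz for the $L^1_t$ bound on $V$), and the perturbative corrections are absorbed using the smallness from Theorem~\ref{t:ell-fixed-time}. Two small points: for the $h$ estimate the paper works directly in $W^{\sigma_1-2,2(d-1)}$ rather than detouring through $H^\tau$, which makes the product step cleaner (your route requires $\la\cdot\psi\in H^{s_d}$ in $L^2_t$, which Kato--Ponce does not give from $H^{s_d}\times L^\infty$ alone); and in your $d=4$ argument the index $W^{-1,2}$ is inconsistent with the conclusion $W^{3,2}$---the correct placement is $\psi\cdot\psi\in L^2_T L^2$ (one factor in $L^4$ from $W^{1,4}$, the other in $L^4$ via $H^1\hookrightarrow L^4$), after which $\nabla\Delta^{-1}(\psi\cdot\psi)\in L^4$ by Hardy--Littlewood--Sobolev.
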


We remark that in essence this is a fixed time result, where 
the evolution equation for $\psi$ is never used. What we prove in effect is the corresponding bound at fixed time where all the $L^2_T$ norms are dropped.

\begin{proof}
	\emph{Step 1: The estimate for $\la$ in \eqref{L2t-SS}.}
	Here we use the div-curl system \eqref{div-curl-la}, which we write
	schematically in the form
	\begin{align*}
	&\d_\al \la_{\al\be}=\d_\be \psi+A\psi+h\d_x\la+\d_x h\la,\\
	&\d_\al \la_{\be\ga}-\d_{\be}\la_{\al\ga}=A\la+\d_x h \la.
	\end{align*}
	By the relation
	\begin{equation*}  
	 \widehat{\la}(\xi)=|\xi|^{-2}(\widehat{\la}\cdot \xi)\xi+|\xi|^{-2}(\widehat{\la}\xi^{\top}-\xi \widehat{\la}^{\top})\cdot\xi,
	\end{equation*}
	we have
	\begin{align*}
	\| \la\|_{W^{\si,r_d}} 
	&\lesssim  \| \RR(\RR\cdot \la)\|_{W^{\si,r_d}}+\| \RR(\RR_\al \la_{\be\ga}-\RR_\be\la_{\al\ga})\|_{W^{\si,r_d}}\\
	&\lesssim  \| \psi\|_{W^{\si,r_d}}+\| |D|^{-1}(A\psi+A\la+h\d_x\la+\d_x h\la)\|_{W^{\si,r_d}}.
	\end{align*}
	By Sobolev embeddings and \eqref{ell-bd} we can estimate
	\begin{align*}
	\||D|^{-1}(h\d_x \la+\d_x h\la)\|_{W^{\si,r_d}}
	&\lesssim \|\RR(h \la)\|_{W^{\si,r_d}}+\||D|^{-1}(\d_x h \la)\|_{W^{\si,r_d}}\\
	&\lesssim \|h\|_{W^{\si,r_d}}\|\la\|_{L^\infty}+\|h\|_{L^\infty}\|\la\|_{W^{\si,r_d}}\\
	&\quad +\||D|^{-1}P_{\leq 0}(\d_x h\la)\|_{L^{r_d}}+\|\d_x h \la\|_{W^{\si-1,r_d}}\\
	&\lesssim  \| \d_x h\|_{H^s}\|\la\|_{W^{\si,r_d}}+\|\d_x h\la\|_{L^2}\\
	&\lesssim  \ep_0\|\la\|_{W^{\si,r_d}}.
	\end{align*}
	Similarly, since $\psi=\Tr \la$ we can bound the other terms by
	\begin{align*}
	\| |D|^{-1}(A\la)\|_{W^{\si,r_d}}&
	\lesssim  \|A\la\|_{L^2}+\| A\la\|_{W^{\si-1,r_d}}\\&
	\lesssim  \| \d_x A\|_{H^s}\|\la\|_{W^{\si,r_d}}+\|A\|_{W^{\si-1,r_d}} \|\la\|_{L^\infty}\\&
	\lesssim \| \d_x A\|_{H^s}\|\la\|_{W^{\si,r_d}}\\&
	\lesssim \ep_0\|\la\|_{W^{\si,r_d}}.
	\end{align*}
	Hence, from these estimates we obtain
	\begin{align}       \label{L2t-la}
	\|\la\|_{L^2_TW^{\si,r_d}}\lesssim \|\psi\|_{L^2_TW^{\si,r_d}}.
	\end{align}
Similarly, in dimensions 4 we also have
\begin{equation}     \label{L2t-laW14}
	\|\la\|_{L^2_TW^{1,4}}\lesssim \|\psi\|_{L^2_TW^{1,4}}.
\end{equation}

\medskip

\emph{Step 2: The estimate for the metric $g$ in \eqref{L2t-SS} and \eqref{L2t-hW14}.}
It suffices to consider the following schematic form
of the equations \eqref{g-eq-original}:
\begin{equation*}
	\De h=h\d_x^2 h+\d_x h\d_x h+h\d_x h\d_x h+\la^2.
	\end{equation*}
	For the first three terms, we use Sobolev embeddings and H\"older's inequality to estimate
	\begin{align*}
	&\|\De^{-1}( h\d_x^2 h+\d_x h\d_x h+h\d_x h\d_x h)\|_{W^{\si_1,2(d-1)}}\\&
	\lesssim \| h\d_x^2 h+\d_x h\d_x h+h\d_x h\d_x h\|_{L^{\frac{2d(d-1)}{5d-4}}}+\| h\d_x^2 h+\d_x h\d_x h+h\d_x h\d_x h\|_{W^{\si_1-2,2(d-1)}}\\&
	\lesssim (1+\|\d_x h\|_{H^{s+1}})\|\d_x h\|_{H^{s+1}}\| h\|_{W^{\si_1,2(d-1)}}\lesssim \ep_0\| h\|_{W^{\si_1,2(d-1)}}.
	\end{align*}
	For the last term, by Sobolev embeddings we have
	\begin{align*}
	\| \De^{-1}(\la^2)\|_{W^{\si_1,2(d-1)}}&\lesssim \| \la^2\|_{L^{\frac{2d(d-1)}{5d-4}}}+ \|\la^2\|_{W^{\si_1-2,2(d-1)}}\\&
	\lesssim \|\la\|_{L^{d/2}}\|\la\|_{L^{2(d-1)}}+\|\la\|_{W^{\si_1-2,2(d-1)}}\|\la\|_{L^\infty}\\&
	\lesssim \| \la\|_{H^s}\| \la\|_{W^{\si,r_d}}
	\lesssim  \ep_0 \| \la\|_{W^{\si,r_d}}.
	\end{align*}
	Hence, by the above estimates and \eqref{L2t-la} we obtain
	\begin{equation*}
	\|h\|_{L^2_T W^{\si_1,2(d-1)}}\lesssim \ep_0 \| \la\|_{L^2_TW^{\si,r_d}}\lesssim \ep_0 \| \psi\|_{L^2_TW^{\si,r_d}}.
	\end{equation*}
	In the same way, from \eqref{L2t-laW14} we also obtain the bound \eqref{L2t-hW14} in dimension $d=4$.

	\medskip
	
    \emph{Step 3: The estimate for the advection field $V$ and the connection coefficients $A$ in \eqref{L2t-SS}.}
    Again it suffices to consider the following schematic form
    of the equations
    \eqref{Ellp-X}, \eqref{Ellp-A1}
    \begin{align*} 
    \De V&= h\d_x^2 V+\d_x h\d_x V+\d_x h\d_x h V+\la^2 (A+V+\d_x h)+\d_x(\la^2),\\
    \De A&=\d_x(\la^2)+\d_x(h\d_x A).
    \end{align*}
    The estimates for $V$ and $A$ are similar, so we only prove the bound for $V$.

    As in the proof of \eqref{ell-bd}, we bound the first three terms on the right by
    \begin{align*}
    \| |D|^{-1}(h\d_x^2 V+\d_x h\d_x V+\d_x h\d_x h V)\|_{H^{s_d}}\lesssim &\ (1+\|\d_x h\|_{ H^{s_d+1}})\|\d_x h\|_{ H^{s_d+1}}\|\d_x V\|_{ H^{s_d}}\\
    \lesssim &\  \ep_0 \|\d_x V\|_{ H^{s_d}}.
    \end{align*}
	For the forth term, by Sobolev embeddings we have
	\begin{align*}
	\| |D|^{-1}(\la^2(A+V+\d_x h))\|_{H^{s_d}}&\lesssim \| \la^2(A+V+\d_x h)\|_{L^{\frac{2d}{d+2}}}+\| \la^2(A+V+\d_x h)\|_{H^{s_d}}\\&
	\lesssim \|\la^2\|_{H^{s_d}}\|(\d_x A,\d_x V,\d_x h)\|_{H^{s_d}}\\&
	\lesssim  \ep_0^2 \|\la\|_{W^{\si,r_d}}.
	\end{align*}
	For the last term, we also have
	\begin{align*}
	\|\mathcal R(\la^2)\|_{H^{s_d}}\lesssim  \| \la\|_{H^{s_d}}\|\la\|_{L^\infty}
	\lesssim \ep_0 \|\la\|_{W^{\si,r_d}}.
	\end{align*}
	Hence, we obtain
	\begin{align*}
	\|\d_x V\|_{H^{s_d}}\lesssim \ep_0 \|\la\|_{W^{\si,r_d}}.
	\end{align*}

	\medskip

	\emph{Step 4: The estimate for $B$ in \eqref{L2t-SS}.}
	Again it suffices to consider the schematic form of the equation \eqref{Ellip-B}:
	\begin{align*}
	\De B&=h\d_x^2 B+\d_x(\la\d_x \la)+\la^2(\d_x A+\d_x V+\d_x h(V+A))+\la\d_x\la(V+A+\d_x h)\\
	&\quad +\d_x V\d_x A+\d_x h V\d_x A.
	\end{align*}
	By Sobolev embeddings and \eqref{ell-bd}, we obtain
	\begin{align*}
	\|\d_x B\|_{L^2H^{s_d-1}}\lesssim \ep_0(\|A\|_{L^2H^{s_d+1}}+\|\d_x V\|_{L^2H^{s_d}})+\ep_0\|\la\|_{L^2_tW^{\si,r_d}}\lesssim \ep_0\|\psi\|_{L^2_tW^{\si,r_d}}.
	\end{align*}
	The proof of this bound is similar to the above \emph{steps}, and we omit the detail.
	
	\medskip 
	
	\emph{Step 5: The estimates for $V$ in \eqref{L1t-V}.}
	It suffices to consider the form
	\begin{align} \label{V-general}
		\De V&= h\d_x^2 V+\d_x h\d_x V+\d_x h\d_x h V+\la^2 (A+V+\d_x h)+\d_x(\la^2).
	\end{align}

	First, we prove that 
	\begin{equation*}
		\|\d_x V\|_{L^1_TW^{\si_d,r_d}}\lesssim \|\la\|_{S[0,T]}^2+\ep_0 \|\d_x V\|_{L^1_TW^{\si_d,r_d}}. 
	\end{equation*}
    This implies the bound \eqref{L1t-V} for the term $\|\d_x V\|_{L^1_TW^{\si_d,r_d}}$.
    
	By $V$-equation and Sobolev embeddings we have
	\begin{align*}
		\|\d_x V\|_{L^1_T W^{k_0-2,r_d}}&\lesssim \||D|^{-1}\big[ h\d_x^2 V+\d_x h\d_x V+\d_x h\d_x h V \big]\|_{L^1_T W^{k_0-2,r_d}}\\&
		\quad +\||D|^{-1}\big[ \la^2 (A+V+\d_x h)  \big]\|_{L^1_T W^{k_0-2,r_d}}+\|\mathcal R(\la^2)  \|_{L^1_T W^{k_0-2,r_d}}\\&
		:= I+II+III,
	\end{align*}
	where $\mathcal R$ is Risez transform.
	For the first term, we easily have
	\begin{align*}
		I&\lesssim \| P_{\leq 0}( h\d_x^2 V+\d_x h\d_x V+\d_x h\d_x h V)\|_{L_T^1 L^2}\\&
		\quad +\| h\d_x^2 V+\d_x h\d_x V+\d_x h\d_x h V\|_{L_T^1 W^{k_0-3,r_d}}\\&
		\lesssim (1+\|\d_x h\|_{L_T^\infty H^{k_d}})(\|\d_x h\|_{L^\infty H^{k_d}}\|\d_x V\|_{L^1_T W^{k_d-2,r_d}}+\|\d_x h\|_{L^2 W^{k_d-2,r_d}}\|\d_x V\|_{L^2_T H^{k_d}})\\&
		\lesssim \ep_1 \|\d_x V\|_{L^1_T W^{k_d-2,r_d}}+\|\la\|_{S[0,T]}^2.
	\end{align*}
	For the second term, by Sobolev embeddings we have
	\begin{align*}
		II&	\lesssim \|P_{\leq 0}|D|^{-1}[\la^2(A+V+\d_x h)]\|_{L^{r_d}}+\|P_{> 0}[\la^2(A+V+\d_x h)]\|_{L^1 W^{\si_d-1,r_d}}\\
		&\lesssim \|\la\|_{L^2L^{r_d}}\|\la\|_{L^2L^\infty}\|A+V+\d_xh\|_{L^d}+\|\la\|_{L^1L^\infty}^2\|\nab (A+V+\d_x h)\|_{L^\infty H^{s_d-1}}\\
		&\quad +\|\la\|_{L^2W^{\si_d,r_d}}\|\la\|_{L^2L^\infty} \|A+V+\d_x h\|_{L^\infty L^\infty}\\
		&\lesssim \|\la\|_{L^2W^{\si_d,r_d}}^2\|(\d_x A,\d_x V,\d_x h)\|_{L^\infty H^{s_d}}\\
		&\lesssim \ep_0\|\la\|_{L^2W^{\si_d,r_d}}^2.
	\end{align*}
	For the last term, we also have
	\begin{align*}
		III\lesssim \|\la\|_{L_T^2L^\infty}\|\la\|_{L_T^2 W^{k_0-2,r_d} }\lesssim \|\la\|_{L_T^2 W^{k_0-2,r_d} }^2\lesssim \ep_1^2.
	\end{align*}
	Hence, we give the bound \eqref{L1t-V} for $\d_x V$.
	
	\medskip 
	
	Next, we prove that 
	\begin{align}   \label{d2V-key}
		\|\d^2_x V\|_{L^1L^d}\lesssim \ep_0 \|\d_x^2 V\|_{L^d}+ \|\la\|_{S[0,T]}^2
	\end{align}
	From the general form \eqref{V-general}, we use \eqref{L2t-SS}, \eqref{L1t-V} for $\d_x V$ to bound the first three terms 
	\begin{align*}
		&\|h\d_x^2 V+\d_x h\d_x V+\d_x h\d_x h V\|_{L^1L^d}\\
		&\lesssim \|\d_x h\|_{L^\infty H^{s_d}}\|\d_x^2 V\|_{L^1L^d}+\|\d_x h\|_{L^\infty L^d}\|\d_x V\|_{L^1W^{\si_d,r_d}}+\|\d_x h\|_{L^2L^\infty}^2 \|\d_x V\|_{L^\infty H^{s_d-2}}\\
		&\lesssim \ep_0 \|\d_x^2 V\|_{L^d}+\ep_0 \|\la\|_{S[0,T]}^2.
	\end{align*}
	We bound the last two terms by
	\begin{align*}
		&\|\la^2(A+V+\d_x h)\|_{L^1L^d}+\|\d_x(\la^2)\|_{L^1L^d}\\
		&\lesssim \|\la\|_{L^2L^\infty}^2\|(\d_x A,\d_x V,\d_x h)\|_{L^\infty H^{s_d-1}}+\|\la\|_{S[0,T]}^2\\
		&\lesssim \|\la\|_{S[0,T]}^2(1+\ep_0).
	\end{align*}
    Then the desired bound \eqref{d2V-key} follows, and we obtain the bound \eqref{L1t-V} for $\d^2_x V$.
\end{proof}

Finally, we turn our attention to the linearization of the elliptic system \eqref{ell-syst}. This has already been studied in \cite{HT} in nonnegative Sobolev spaces. However, for our global estimates here we need instead 
to work with the linearized equation in $H^{-1}$. For this case, the elliptic estimates are as follows:

\begin{prop}
    With the notation and hypothesis in \propref{bootstrap-prop}, for the linearized equations of \eqref{ell-syst} we have
    \begin{align}  \label{la-lin}
        \|\la_{lin}\|_{L^\infty H^{-1}}&\lesssim \|\psi_{lin}\|_{L^\infty H^{-1}},\\   \label{linEst}
        \|\d_x h_{lin}\|_{L^2L^2}+\|A_{lin}\|_{L^2L^{2}}+\|V_{lin}\|_{L^2L^{2}}+\|B_{lin}\|_{L^2H^{-1}}&\lesssim \|\psi_{lin}\|_{L^\infty H^{-1}}\|\psi\|_{S[0,T]}.
    \end{align}
\end{prop}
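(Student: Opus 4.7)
My approach would parallel the fixed-time estimates in Theorem~\ref{t:ell-fixed-time} and the Strichartz-level estimates in Lemma~\ref{Lt2-lem}, but now applied to the linearized system and adapted to the $H^{-1}$ regularity of $\psi_{lin}$. The coupling between $\la_{lin}$ and the other linearized variables forces Steps~1 and~2 below to be closed simultaneously via smallness of the background.

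\textbf{Step 1: The estimate \eqref{la-lin} for $\la_{lin}$.} I linearize the div-curl system \eqref{div-curl-la}. Schematically,
\[
\d_\al (\la_{lin})_{\al\be} = \d_\be \psi_{lin} + A\psi_{lin} + A_{lin}\psi + h\,\d\la_{lin} + h_{lin}\d\la + \d h\,\la_{lin} + \d h_{lin}\,\la,
\]
together with the analogous curl equation. Inverting as in Step~1 of Lemma~\ref{Lt2-lem} via Riesz transforms, the leading contribution gives $\|\la_{lin}\|_{H^{-1}} \lesssim \|\psi_{lin}\|_{H^{-1}}$. The remaining perturbative products split into those with the linearized variable being $\la_{lin}$ or $\psi_{lin}$ — which are absorbed into the left-hand side using $\|\d h\|_{H^{s_d}} + \|A\|_{H^{s_d}} \lesssim \ep_0$ after a paraproduct decomposition and duality against $H^1$ — and those with the linearized variable being $h_{lin}, A_{lin}$, which are handled in conjunction with Step~2.

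\textbf{Step 2: The estimate \eqref{linEst} for $(h_{lin}, V_{lin}, A_{lin}, B_{lin})$.} After linearization, each of the elliptic equations for $h, V, A, B$ in \eqref{ell-syst} takes the schematic form $\Delta U_{lin} = \la\cdot\la_{lin} + (\text{perturbative in }U_{lin})$, with extra derivatives on the source for $V$, $A$, $B$. The perturbative pieces are absorbed by smallness as in Lemma~\ref{Lt2-lem}. For the principal $\la\cdot\la_{lin}$ source, I argue by duality: for any $\phi\in H^1$,
\[
|\langle \la\cdot \la_{lin},\,\phi\rangle| \le \|\la_{lin}\|_{H^{-1}}\,\|\la\,\phi\|_{H^1},
\]
and the multiplication estimate $\|\la\phi\|_{H^1} \lesssim \|\la\|_{\mathbf{str}}\|\phi\|_{H^1}$ follows from the Sobolev embeddings $W^{s_d-2,r_d}\hookrightarrow L^\infty$ and $W^{s_d-1,r_d}\hookrightarrow W^{1,d}$, valid since $s_d > d/r_d + 2$. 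Taking $L^2_T$ of the pairing produces the factor $\|\la\|_{S[0,T]}$, which by Lemma~\ref{Lt2-lem} is controlled by $\|\psi\|_{S[0,T]}$. Combined with Step~1 this yields \eqref{linEst}.

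\textbf{Main obstacle.} The essential difficulty is operating one derivative below $L^2$ for $\psi_{lin}$, where direct fractional Leibniz estimates are unavailable. The key technical device is a systematic duality argument, interpreting $\la_{lin}$ and $\psi_{lin}$ as elements of $H^{-1}$ and reducing all nonlinear estimates to multiplication bounds on $H^1$ test functions that can be controlled using the background Strichartz norm $\|\la\|_{S[0,T]}$. Secondary technicalities are the low-frequency behavior of $|D|^{-1}$ applied to the sources, handled by a separate $P_{\le 0}$ analysis in $L^2$ as in Lemma~\ref{Lt2-lem}, and the coupled nature of the linearized system, which requires closing Steps~1 and~2 jointly so that the bilinear coupling terms involving $(h_{lin}, A_{lin})$ on the right-hand side of the $\la_{lin}$-equation can be absorbed by smallness of the background.
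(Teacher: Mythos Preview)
Your proposal is correct and follows essentially the same approach as the paper. The one organizational difference worth noting is that the paper avoids the simultaneous bootstrap you describe: it first bounds $\d_x h_{lin}$, $A_{lin}$, $V_{lin}$ in terms of $\|\la_{lin}\|_{H^{-1}}$ (rather than $\|\psi_{lin}\|_{H^{-1}}$), and only afterwards closes the $\la_{lin}$ estimate in terms of $\psi_{lin}$, absorbing the coupling terms via $\ep_0\|\la_{lin}\|_{H^{-1}}$; this sequential ordering is slightly cleaner but the content is the same. Your duality device for the bilinear source $\la\cdot\la_{lin}$ is equivalent to the paper's explicit high/low frequency splitting of $\la_{lin}$ (their estimate \eqref{h-la2}). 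One minor slip: the embedding you need is $W^{s_d-2,r_d}\hookrightarrow W^{1,d}$ (not $W^{s_d-1,r_d}$), since only $s_d-2$ derivatives are controlled by the $\mathbf{str}$ norm; this still holds under the hypothesis $s_d>d/r_d+2$.
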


\begin{proof}
    \emph{Step 1: Prove the $h_{lin}$ bound }
    \begin{equation}    \label{h-lin}
         \|\d_x h_{lin}\|_{L^2L^2}\lesssim \|\la_{lin}\|_{L^\infty H^{-1}}\|\la\|_{L^2W^{\si_d,r_d}}.
    \end{equation}
    
    For the $h$-equations in \eqref{ell-syst}, we consider the general form
    \begin{align*}
        \De h=h\d_x^2 h+\d_x h\d_x h+h\d_x h\d_x h+\la^2.
    \end{align*}
    For the term $\la_{lin}\la$, by Sobolev embeddings we have
    \begin{equation}    \label{h-la2}
    \begin{aligned}
        &\||D|^{-1}(\la_{lin}\la)\|_{L^2}\\
        &\lesssim \||D|^{-1}(\la_{lin,\leq 0}\la)\|_{L^2}+\||D|^{-1}(\la_{lin,>0}\la)\|_{L^2}\\
        &\lesssim \|\la_{lin,\leq 0}\|_{L^2}\|\la\|_{L^d}+\||D|^{-1}(|D|^{-1}\la_{lin,>0}|D|\la)+|D|^{-1}\la_{lin,>0}\la\|_{L^2}\\
        &\lesssim \|\la_{lin,\leq 0}\|_{L^2}\|\la\|_{L^d}+\||D|^{-1}\la_{lin,>0}\|_{L^2}(\||D|\la\|_{L^d}+\|\la\|_{L^\infty})\\
        &\lesssim \|\la_{lin}\|_{H^{-1}}\|\la\|_{\mathbf{str}}.
    \end{aligned}
    \end{equation}
    For the term $h_{lin} \d^2 h$, we also have
    \begin{align*}
        \||D|^{-1}(h_{lin} \d^2 h)\|_{L^2}&\lesssim \|h_{lin} \d h\|_{L^2}+\||D|^{-1}(\d h_{lin} \d h)\|_{L^2}\\
        &\lesssim \|h_{lin}\|_{L^{\frac{2d}{d-2}}}\|\d h\|_{L^d}+\||D| h_{lin}\|_{L^2}\|\d h\|_{L^d}\\
        &\lesssim \|\d h_{lin}\|_{L^2}\|\d h\|_{L^d}.
    \end{align*}
    The other terms are controlled at the same way. Hence, by \eqref{prop-Ass1} we obtain
    \begin{align*}
        \|\d h_{lin}\|_{L^2}\lesssim \ep_0\|\d h_{lin}\|_{L^2}+\|\la_{lin}\|_{H^{-1}}\|\la\|_{\mathbf{str}}.
    \end{align*}
    This implies the bound \eqref{h-lin}.
    
    \medskip 
    
    \emph{Step 2: Prove the bound }
    \begin{equation}    \label{A,V-lin}
        \|A_{lin}\|_{L^2L^{2}}+\|V_{lin}\|_{L^2L^{2}}\lesssim \|\la_{lin}\|_{L^\infty H^{-1}}\|\la\|_{S[0,T]}.
    \end{equation}
    
    The estimates of $V_{lin}$ and $A_{lin}$ are similar, so we only prove the first one. For the $V$-equation, we consider the form
    \begin{align*}
    \De V= h\d_x^2 V+\d_x h\d_x V+\d_x h\d_x h V+\la^2 (A+V+\d_x h)+\d_x(\la^2).
    \end{align*}
    For $\d(\la^2)$, we have the bound \eqref{h-la2}. For the term $\la^2 (A+V+\d_x h)$, we have 
    \begin{align*}
        \|\De^{-1}(\la^2 V_{lin})\|_{L^2}\lesssim \|\la^2 V_{lin}\|_{L^{\frac{2d}{d+4}}}\lesssim \|\la\|_{L^d}^2\|V_{lin}\|_{L^2},
    \end{align*}
    and 
    \begin{align*}
        &\|\De^{-1}(\la_{lin}\la V)\|_{L^2}\\
        &\lesssim \|\De^{-1}(\<D\>^{-1}\la_{lin} \<D\>\la V+\<D\>^{-1}\la_{lin} \la \<D\>V)+\De^{-1}\<D\>(\<D\>^{-1}\la_{lin} \la V\|_{L^2}\\
        &\lesssim \|\<D\>^{-1}\la_{lin} \<D\>\la V+\<D\>^{-1}\la_{lin} \la \<D\>V\|_{L^{\frac{2d}{d+4}}}+\|\<D\>^{-1}\la_{lin} \|_{L^2} \|\la\|_{L^d\cap L^\infty}\| V\|_{L^d}\\
        &\lesssim \|\la_{lin} \|_{H^{-1}} \|\la\|_{\mathbf{str}}\| V\|_{L^d}.
    \end{align*}
    For the term $h\d^2 V$, we have
    \begin{align*}
        \|\De^{-1}(h_{lin} \d^2 V)\|_{L^2}
        &= \|\De^{-1}(\d h_{lin} \d V)+|D|^{-1}(h_{lin}\d V)\|_{L^2}\\
        &\lesssim \|\d h_{lin}\|_{L^2}\|\d V\|_{H^s},
    \end{align*}
    and 
    \begin{align*}
        \|\De^{-1}(h \d^2 V_{lin})\|_{L^2}
        &= \|\De^{-1}(\d^2 h V_{lin} )+|D|^{-1}(\d h V_{lin} )+h V_{lin}\|_{L^2}\\
        &\lesssim \|\d h\|_{H^s}\|\d V_{lin} \|_{L^2}.
    \end{align*}
    The second and third term are bounded similarly. Hence, we obtain
    \begin{align*}
        \|V_{lin}\|_{L^2}\lesssim \ep_1(\|V_{lin}\|_{L^2}+\|A_{lin}\|_{L^2})+\|\la_{lin}\|_{H^{-1}}\|\la\|_{\mathbf{str}}.
    \end{align*}
    In the same way, we also have
    \begin{align*}
        \|A_{lin}\|_{L^2}\lesssim \|\la_{lin}\|_{H^{-1}}\|\la\|_{\mathbf{str}}.
    \end{align*}
    These two estimates imply the desired bound \eqref{A,V-lin}.

     \medskip 
     
     \emph{Step 3: Prove the $\la_{lin}$ bound \eqref{la-lin}.}
     
     As before, it suffices to consider the simplified form
     of the div-curl system for $\lambda$, namely
     \begin{align*}
	 &\d_\al \la_{\al\be}=\d_\be \psi+A\psi+h\d_x\la+\d_x h\la,\\
	 &\d_\al \la_{\be\ga}-\d_{\be}\la_{\al\ga}=A\la+\d_x h \la.
	\end{align*}
	For the term $A\la$ we have
	\begin{align*}
	    \||D|^{-1}(A_{lin}\la)\|_{H^{-1}}\lesssim \|A_{lin}\|_{L^2}\|\la\|_{L^d},
	\end{align*}
	and 
	\begin{align*}
	    \||D|^{-1}(A\la_{lin})\|_{H^{-1}}&\lesssim \|A\|_{L^d}\|\la_{lin,\leq 0}\|_{L^2}+\||D|^{-1}(|D|A|D|^{-1}\la_{lin,>0})+A|D|^{-1}\la_{lin,>0}\|_{L^2}\\
	    &\lesssim \|\d A\|_{H^s}\|\la_{lin}\|_{H^{-1}}.
	\end{align*}
	The other terms are controlled by
	\begin{align*}
	    \|\psi_{lin}\|_{H^{-1}}+\|\d_x h_{lin}\|_{L^2}\|\la\|_{H^s}+\|\d_x h\|_{H^{s}}\|\la_{lin}\|_{H^{-1}}.
	\end{align*}
	Then these estimates combined with \eqref{h-lin} and \eqref{A,V-lin} yield
	\begin{align*}
	    \|\la_{lin}\|_{H^{-1}}\lesssim \|\psi_{lin}\|_{H^{-1}}+\ep_0 \|\la_{lin}\|_{H^{-1}}.
	\end{align*}
	This implies the bound \eqref{la-lin}.
	
	\medskip 
	
	\emph{Step 4: Prove the bound }
     \begin{equation}     \label{B-lin}
         \|B_{lin}\|_{L^2H^{-1}}\lesssim \|\la_{lin}\|_{L^\infty H^{-1}}\|\la\|_{S[0,T]}.
     \end{equation}
     
     For the $B$-equation we consider the general form
     \begin{align*}
	 \De B& =h\d_x^2 B+\d^2_x(\la^2)+\d_x(\la^2)(\d_x h+V)+\la^2(\d^2_x h+\d_x h\d_x h+\d_x A+\d_x V+\d_x h V)\\
	 &\quad +\d_x V\d_x A+\d_x h V\d_x A.
	\end{align*}
     For the second term $\d^2(\la^2)$ we have
     \begin{align*}
         \|\la_{lin}\la\|_{H^{-1}}\lesssim \|\la_{lin}\|_{H^{-1}}\|\la\|_{\mathbf{str}}.
     \end{align*}
     For the third term, we have
     \begin{align*}
         \|\De^{-1}\big(\d_x(\la_{lin}\la) V\big)\|_{H^{-1}}&\lesssim \|\la_{lin}\|_{H^{-1}}\|\la\|_{\mathbf{str}}\|\d_x V\|_{H^s}.
     \end{align*}
     and 
     \begin{align*}
         \|\De^{-1}\big(\d_x(\la^2) V_{lin}\big)\|_{H^{-1}}&\lesssim \|\la\|^2_{\mathbf{str}}\| V_{lin}\|_{L^2}.
     \end{align*}
     We control the other terms at the same way, then by \eqref{h-lin}, \eqref{A,V-lin} and \eqref{prop-Ass1} we obtain 
     \begin{align*}
         \|B_{lin}\|_{H^{-1}}\lesssim \ep_0\|B_{lin}\|_{H^{-1}}+\|\la_{lin}\|_{H^{-1}}\|\la\|_{\mathbf{str}}.
     \end{align*}
     This gives the bound \eqref{B-lin}.
     
     In conclusion, from \eqref{h-lin}, \eqref{A,V-lin}, \eqref{B-lin} and \eqref{la-lin} we obtain the second bound \eqref{linEst}.
\end{proof}

\bigskip


\section{Energy estimates} \label{Sec-Energy}
Here we consider the Schr\"odinger equation \eqref{mdf-Shr-sys-2}, and prove the energy estimates in \propref{En-prop} as well as an energy estimate of linearized Schr\"odinger equation. These will be needed in order to prove energy bounds \eqref{prop-result} in fractional Sobolev spaces.
For two tensors ${\mathbb T}$ and $\widetilde{{\mathbb T}}$, we denote ${\mathbb T}*\widetilde{\mathbb T}$ the bilinear combination of them.

To start with, we define the energy functional as follows. Let the metric $g$ and connection $A$ be (part of) the solutions to the elliptic equations \eqref{ell-syst}. For any nonnegative integer $k\in \N$, we define $E^k(\psi)$ as 
\begin{equation}     \label{def-energy}
    E^k(\psi):=\|\psi\|_{\mathsf H^k}^2=\Big(\sum_{l=0}^k \int_\Sigma|\nab^{A,l} \psi|_g^2 d\mu\Big)^{1/2}.
\end{equation}
We will show that this energy functional satisfies the bounds in Proposition ~\ref{En-prop}.

\medskip 

\begin{proof}[a) Proof of the energy equivalence relation \eqref{eq-E}]

\ 

The relation \eqref{eq-E} for $k\leq s$ with some $s>\frac{d}{2}$ is already a consequence of \eqref{equ-rlt}. We should be more accurate here, we get a better range from  \eqref{equ-rlt}.

It remains to to prove \eqref{eq-E} for $k>s$.
Our starting point is the higher regularity bounds for the elliptic system \eqref{ell-syst}, which were proved in \cite[Section 7.6]{HT}, as follows:
\begin{align}     \label{ell-highReg}
        \|(\la,\SS)\|_{H^\si\times\mathcal H^\si} \lesssim \| \psi\|_{H^\si}, \quad \si\geq s.
\end{align}
This implies in particular that 
\begin{align}     \label{ell-highReg+}
        \|(\Gamma,A)\|_{H^{\si+1}} \lesssim \| \psi\|_{H^\si}, \quad \si\geq s.
\end{align}

By the expression \eqref{nabT=dT}, Sobolev embeddings and \eqref{ell-highReg+} we have
    \begin{align*}
        \|\psi\|_{\mathsf H^k}&\lesssim \|\psi\|_{H^k}+\sum_{1\leq n\leq k}\ \sum_{l_1+\cdots+l_{n+1}\leq  k-n}\|\d_x^{l_1}(\Ga+iA)\cdots \d_x^{l_n}(\Ga+iA)\cdot \d_x^{l_{n+1}}\psi\|_{L^2}\\
        &\lesssim \|\psi\|_{H^k}+\sum_{1\leq n\leq k}\|\Ga+iA\|_{H^k}\|\Ga+iA\|_{H^s}^{n-1}\|\psi\|_{H^s}
        +\sum_{1\leq n\leq k}\|\Ga+iA\|_{H^s}^n\|\psi\|_{H^k}\\
        &\lesssim \|\psi\|_{H^k}+\sum_{1\leq n\leq k}\|\psi\|_{H^k}\|\psi\|_{H^s}^n\\
        &\lesssim \|\psi\|_{H^k}.
    \end{align*}
    Conversely, by \eqref{nabT=dT} we also have
	\begin{align*}
	\|\psi\|_{H^k}\lesssim &\| \psi\|_{\mathsf H^k}+\sum_{1\leq n\leq k}\ \sum_{l_1+\cdots+l_{n+1}\leq  k-n}\|\d^{l_1}(\Ga+iA)\cdots \d^{l_n}(\Ga+iA)\cdot \d^{l_{n+1}}\psi\|_{L^2}\\
	\lesssim &\|\psi\|_{\mathsf H^k}+\ep_0\| \psi\|_{H^k}.
	\end{align*}
	Thus we obtain the equivalence relation $\|\psi\|_{\mathsf H^k}\approx \|\psi\|_{H^k}$.
	
	In the same way as the above, we also have the equivalence $\|\la\|_{\mathsf H^k}\approx \|\la\|_{H^k}$. By \eqref{ell-highReg}, \eqref{ell-lin} and the bound 
	$$\|\psi\|_{H^k}=\|g^{\al\be} \la_{\al\be}\|_{H^k}\lesssim \|\la\|_{H^k}+\ep_0\|\la\|_{H^k}\lesssim \|\la\|_{H^k},$$
	we also have the equivalence $\|\psi\|_{H^k}\approx\|\la\|_{H^k}$. Hence, the desired equivalence relations \eqref{eq-E} are obtained.
\end{proof}

\medskip 

\begin{proof}[b) Proof of the energy estimates \eqref{Energy}]

\ 

\emph{Step 1: Prove that the time derivative of $E^k$ has the form} 
\begin{equation}  \label{Energy-old}
  \frac{d}{dt} E^k(\psi) = \sum_{\sum |\alpha_j|\leq 2k} \int 
\Re  \prod_{j=1}^{J = 4} \nabla^{A,\alpha_j} \la \,  d\mu .
\end{equation}  
with coefficients depending on the metric $g$ so that each of the terms in the above integrand is covariant.

We recall the Schr\"odinger equation \eqref{mdf-Shr-sys-2} first
\begin{equation}\label{Sch-re}
    i(\d_t^B-V^\ga\nab^A_\ga)\psi+\De_g^A\psi
    =\ -i\la_{\si}^{\ga}\Im(\psi\bar{\la}^{\si}_{\ga}).
\end{equation}
Since the energy \eqref{def-energy} does not depend on the choice of gauge, we can choose the advection field $V=0$. Then the volume form $d\mu=\sqrt{\det g}$ is preserved along time $t$.

Applying $\frac{d}{dt}$ to $\| | \nab^{A,k}\psi|_g (t)\|_{L^2(d\mu)}^2$, by \eqref{g_dt} we obtain
	\begin{align}
	& \frac{d}{dt}\int_\Si |\nab^{A,k}\psi|_g^2 d\mu\nonumber\\
    &=\int_{\mathbb R^d} 2\Re g( \nabla^B_t\nabla^{A,k}\psi, \overline{\nabla^{A,k}\psi}) \ d\mu+ (\nabla_tg) (\nabla^{A,k}{\psi}, \overline{\nabla^{A,k}\psi}) \ d\mu\nonumber\\
    &=\int_{\mathbb R^d} 2\Re g( \nabla^B_t\nabla^{A,k}\psi, \overline{\nabla^{A,k}\psi}) +2G (\nabla^{A,k}\la, \overline{\nabla^{A,k}\psi}) \ d\mu. \label{dt-Energy}
\end{align}
	By the equalities \eqref{dt-Ga} and \eqref{Cpt-A&B} with $V= 0$, we have
	\begin{align*}
	[\partial^B_t,\nabla^{A,k}]\psi=\sum_{l_1+l_2+l_3=k} \nab^{A,l_1}\la*\nab^{A,l_2}\psi*\nab^{A,l_2}\psi.
	\end{align*}
Moreover, note that by Gauss equation, the curvature tensor ${\bf R}$ on $\Sigma$ can be
expressed as ${\bf R}= \la*\la$, so the following commutator equality holds
\begin{align}   \label{comm-De}
	[\nabla^{A,k},\Delta^A ]\psi=\sum_{i+j+m=k } \nab^{A,i}\la*\nab^{A,j}\la*\nab^{A,m}\psi.
\end{align}
So by \eqref{Sch-re}, the  first term in the right-hand side of \eqref{dt-Energy} reduces to
\begin{align*}
&\int_{\mathbb R^d} 2\Re g( \nabla^B_t\nabla^{A,k}\psi, \overline{\nabla^{A,k}\psi})\ d\mu\nonumber\\
&=\int_{\mathbb R^d} 2\Re g(\nabla^{A,k}\partial^B_t \psi, \overline{\nabla^{A,k}\psi}) +\sum_{l_1+l_2+l_3=k} \Re  g(\nab^{A,l_1}\la*\nab^{A,l_2}\la*\nab^{A,l_3}\psi, \overline{\nabla^{A,k}\psi})\ d\mu\nonumber\\
&=\int_{\mathbb R^d} 2\Re g(\nabla^{A,k}i\Delta^A\psi, \overline{\nabla^{A,k}\psi})
+\sum_{l_1+l_2+l_3=k} \Re  g(\nab^{A,l_1}\la*\nab^{A,l_2}\la*\nab^{A,l_3}\psi, \overline{\nabla^{A,k}\psi})\ d\mu\nonumber\\
&=\int_{\mathbb R^d} -2\Re i|\nab^{A,k+1}\psi|^2_g
+\sum_{l_1+l_2+l_3=k} \Re  g(\nab^{A,l_1}\la*\nab^{A,l_2}\la*\nab^{A,l_3}\psi, \overline{\nabla^{A,k}\psi})\ d\mu   \nonumber\\
&= \int_{\R^d} \sum_{l_1+l_2+l_3=k} \Re  g(\nab^{A,l_1}\la*\nab^{A,l_2}\la*\nab^{A,l_3}\psi, \overline{\nabla^{A,k}\psi})\ d\mu  
\end{align*}
Hence, we obtain the energy relation \eqref{Energy-old}.  

\medskip 

\emph{Step 2: Proof of energy bound \eqref{Energy}.}

Let us first recall the following interpolation inequality proved by Hamilton \cite[Section 12]{H82}.
\begin{lemma}  
    If T is any tensor and if $1 \leq  i \leq  l-1$, then with a constant $C=C(d,l)$ depending only on dimensions $d$ and $l$, which is independent of the metric $g$ and the connection $\Ga$, we have the estimate
    \begin{equation*}
        \int_{\R^d} |\nab^i T|^{\frac{2l}{i}} \ d\mu\leq C |T|_{L^\infty}^{2(\frac{l}{i}-1)}\int_{\R^d} |\nab^l T|^2 \ d\mu.
    \end{equation*}
\end{lemma}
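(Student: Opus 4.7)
The plan is to follow Hamilton's original approach and establish the inequality as an instance of Gagliardo--Nirenberg interpolation for tensor fields, carried out entirely through intrinsic operations so that the resulting constant depends only on $d$ and $l$. Set $I_k := \int_{\R^d} |\nab^k T|_g^{2l/k}\, d\mu$ for $1 \leq k \leq l-1$ and $I_l := \int_{\R^d} |\nab^l T|_g^2\, d\mu$. The target $I_i \leq C(d,l)\,|T|_{L^\infty}^{2(l/i-1)}\,I_l$ will be obtained by first deriving a log-convexity chain between the $I_k$'s and then iterating.

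The main computation is as follows. For $2 \leq i \leq l-1$, write
\begin{equation*}
\int |\nab^i T|^{2l/i}\, d\mu = \int |\nab^i T|^{2(l-i)/i}\, g(\nab^i T, \nab^i T)\, d\mu,
\end{equation*}
and integrate by parts once, moving a single outermost covariant derivative off one copy of $\nab^i T$. Because only one derivative is moved, no commutator of $\nab$'s appears and no curvature terms are generated; the divergence of $|\nab^i T|^{2(l-i)/i}\,\nab^i T$ is controlled pointwise by $C(l,i)\, |\nab^i T|^{2(l-i)/i}\,|\nab^{i+1}T|$, leaving
\begin{equation*}
I_i \leq C\int_{\R^d} |\nab^{i-1}T|\,|\nab^i T|^{2(l-i)/i}\,|\nab^{i+1}T|\, d\mu.
\end{equation*}
Then apply H\"older with the three exponents $(2l/(i-1),\, l/(l-i),\, 2l/(i+1))$, whose reciprocals $(i-1)/(2l)+(l-i)/l+(i+1)/(2l)$ sum to $1$ exactly, to obtain the log-convexity estimate
\begin{equation*}
I_i^{\,2i} \leq C(d,l)\, I_{i-1}^{\,i-1}\, I_{i+1}^{\,i+1},\qquad 2 \leq i \leq l-1.
\end{equation*}
At the endpoint $i=1$ the analogous calculation has $T$ itself as the free factor; bounding it in $L^\infty$ and applying H\"older with exponents $l/(l-1)$ and $l$ to the remaining integral yields
\begin{equation*}
I_1 \leq C(d,l)\,|T|_{L^\infty}^{l}\,I_2.
\end{equation*}

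The conclusion then follows by a finite iteration. Combining the base case $I_1 \leq C |T|_{L^\infty}^l I_2$ with the log-convexity chain and substituting upward through $i = 2, 3, \dots, l-1$, a routine bookkeeping of exponents (forced by scaling to be precisely $|T|_{L^\infty}^{2(l/i-1)}$) delivers the claimed inequality; the cleanest way to verify the exponent count is induction on $l$, checking $l=2,3$ by hand. The main obstacle I anticipate is ensuring that the constant is genuinely independent of $g$ and $\Gamma$: every step must remain intrinsic. The divergence theorem $\int \nab\cdot X\,d\mu=0$ is metric-compatible, the Leibniz rule for the scalar factor $|\nab^i T|^{2(l-i)/i}$ is tautological, and critically we never commute two covariant derivatives --- doing so would produce curvature terms $R \sim \la*\la$ that would reintroduce $g$-dependence into the constant. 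Finally, enough decay of $T$ at infinity is needed for the boundary terms in the integrations by parts to vanish; this is harmless by density starting from $T \in C^\infty_c$, provided all the quantities appearing on the right-hand side are finite to begin with.
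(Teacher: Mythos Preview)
Your proposal is correct and is precisely Hamilton's original argument from \cite[Section~12]{H82}. The paper does not supply its own proof of this lemma---it is simply quoted as a known result of Hamilton---so there is no separate argument to compare against; you have in effect reproduced the cited source.
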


Then by the interpolation inequality, \eqref{Energy-old}, \eqref{ell-lin} and \eqref{ell-highReg}, for each integer $k$ we have
    \begin{align*}
        \frac{d}{dt}E^k(\psi) &\lesssim \sum_{m\leq k}\sum_{i+j+l\leq m} \|\nab^{A,i}\la\|_{L^{2m/i}}\|\nab^{A,j}\la\|_{L^{2m/j}}\|\nab^{A,l}\psi\|_{L^{2m/l}}\|\nab^{A,m}\psi\|_{L^2}\\&
        \lesssim \|\la\|_{L^\infty}^2 \|\la\|_{\mathsf H^k}^2.
    \end{align*}
Thus we obtain the energy estimates \eqref{Energy}.
\end{proof}

\medskip

Finally, we prove an energy estimate in negative Sobolev spaces $H^{-1}$ for the linearized equation of \eqref{mdf-Shr-sys-2}.

\begin{prop}\label{Diff-est}
Let $d\geq 4$. Under the assumptions \eqref{ini-data} and \eqref{prop-Ass1}, for the linearized equation of \eqref{mdf-Shr-sys-2} we have the bound
\begin{equation}  \label{Diff-est-1}
\|\psi_{lin}\|_{L^\infty_T H^{-1}} \leq C_{lin}\|\psi_{lin}(0)\|_{ H^{-1}}+C_{lin}\| \psi\|_{S[0,T]}^2 \| \psi_{lin}\|_{L^\infty_T H^{-1}}.
\end{equation}
\end{prop}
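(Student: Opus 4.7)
The plan is to derive the $H^{-1}$ bound via an $L^2$-type energy estimate with the frequency weight $(1-\De)^{-1}$, applied to the linearized Schr\"odinger equation. Linearizing \eqref{mdf-Shr-sys-2} and \eqref{ell-syst}, the variable $\psi_{lin}$ satisfies a Schr\"odinger-type equation which I would write schematically as
\begin{equation*}
    i\d_t\psi_{lin}+\De\psi_{lin} = \mathcal L(\psi_{lin})+\mathcal N,
\end{equation*}
where $\mathcal L(\psi_{lin})$ collects the corrections to the flat principal operator acting on $\psi_{lin}$ itself (i.e.\ $h\cdot D^2\psi_{lin}$, $(A+\Ga+V)\cdot D\psi_{lin}$, $(B+|A|^2)\psi_{lin}$), while $\mathcal N$ gathers the terms in which the linearized auxiliary variables multiply derivatives of the base solution $\psi$ (namely $h_{lin}\cdot D^2\psi$, $(A_{lin}+\Ga_{lin}+V_{lin})\cdot D\psi$, $B_{lin}\psi$) together with the linearization of the cubic right-hand side, $\la_{lin}*\la*\psi$ and $\la*\la*\psi_{lin}$.

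I would then work with the energy functional $E_{-1}(t):=\<(1-\De)^{-1}\psi_{lin}(t),\psi_{lin}(t)\>\approx \|\psi_{lin}(t)\|_{H^{-1}}^2$. Differentiating in time, the contribution from $i\De\psi_{lin}$ vanishes since $\De$ commutes with $(1-\De)^{-1}$ and is self-adjoint. For the $\mathcal L$-terms, the key point is that the Hermitian bilinear form $(u,v)\mapsto\int h^{\al\be}\d_\al u\,\d_\be\bar v\,dx$ is real by the symmetry of $h$, so two integrations by parts against $(1-\De)^{-1}\psi_{lin}$ cancel the top-order piece and leave only commutator errors involving $\d h$ and $\d^2 h$, which are controlled by the Strichartz bounds on $h$ from \eqref{L2t-SS}. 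The magnetic $A\cdot D\psi_{lin}$ and quadratic $|A|^2\psi_{lin}$ pieces are handled analogously using the Coulomb condition $\d^\al A_\al=0$, while the transport $V\cdot D\psi_{lin}$ is skew-symmetric and yields only a commutator bounded via \eqref{L1t-V}. The $B\psi_{lin}$ and $\Ga\cdot D\psi_{lin}$ contributions are treated via the corresponding Strichartz-type bounds on $(\Ga,B)$ implied by \eqref{L2t-SS}.

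For $\mathcal N$, each term would be estimated directly in $L^1_T H^{-1}$ via H\"older, Sobolev embedding, and the linearized elliptic bounds \eqref{la-lin}--\eqref{linEst}. The linearized coefficients $h_{lin}, A_{lin}, V_{lin}, B_{lin}, \la_{lin}$ are placed in $L^2_T L^2$ or $L^\infty_T H^{-1}$ with norm $\lesssim \|\psi_{lin}\|_{L^\infty_T H^{-1}}\|\psi\|_{S[0,T]}$, while the factors involving derivatives of $\psi$ are bounded in $L^2_T L^d$ (or $L^\infty_T H^1$) using the embedding $W^{s_d-2,r_d}\hookrightarrow L^d$, valid since $s_d-2>d/r_d$ (cf.\ Remark~\ref{Cho-Reg}). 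Cauchy-Schwarz in time then yields a total contribution bounded by $\|\psi\|_{S[0,T]}^2\|\psi_{lin}\|_{L^\infty_T H^{-1}}^2$. For the cubic term $\la_{lin}\la\psi$, one pairs $\la_{lin}\in L^\infty H^{-1}$ against $\la\psi\in L^1_T H^1$, the latter controlled by the Strichartz bound on $\la$ from \eqref{L2t-SS} and the energy bound on $\psi$.

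Integrating in time and taking the supremum produces the desired estimate \eqref{Diff-est-1}. The main obstacle is the treatment of the principal variable-coefficient term $h^{\al\be}\d_\al\d_\be\psi_{lin}$: at the $H^{-1}$ level a covariantly-weighted inner product is unavailable, which forces us to perform the Hermitian symmetrization at the flat $(1-\De)^{-1}$-weighted level, and to extract the Strichartz-type bounds on $\d h$, $\d A$, $\d V$, $B$ from \eqref{L2t-SS}--\eqref{L1t-V} (rather than their $H^{s_d}$-energy norms) so that the resulting error is genuinely quadratic in $\|\psi\|_{S[0,T]}$, as prescribed by the statement.
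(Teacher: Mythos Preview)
Your treatment of the source terms $\mathcal N$ via the linearized elliptic bounds \eqref{la-lin}--\eqref{linEst} is correct and essentially matches the paper's proof of the perturbative estimate \eqref{pert-est-low}. The difficulty lies entirely in the linear part, and there your flat $(1-\De)^{-1}$ approach has a genuine gap.

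Set $u=(1-\De)^{-1}\psi_{lin}\in H^1$. After your two integrations by parts and the Hermitian symmetrization, the top-order real quadratic forms do vanish, but the residual commutator from $h^{\al\be}\d_\al\d_\be\psi_{lin}$ is \emph{not} merely ``$\d h$ and $\d^2 h$'' acting on $u$ and $\d u$. One is inevitably left with trilinear terms of the type
\[
\int \d_\ga h^{\al\be}\,\d_\al\d_\be u\,\overline{\d^\ga u}\,dx,
\]
which pair one factor of $\d^2 u$ against $\d u$. Since $u\in H^1$ only, $\d^2 u$ is controlled merely in $H^{-1}$, and such an expression cannot be bounded by $\|\d h\|_{L^\infty}\|u\|_{H^1}^2$; further integrations by parts just cycle among equivalent forms. (For the transport term $V^\ga\d_\ga\psi_{lin}$ the analogous expression carries a $\text{Re}$ rather than an $\text{Im}$, and the identity $\text{Re}(\d_\ga w\,\bar w)=\tfrac12\d_\ga|w|^2$ saves you; for the second-order term $ih\d^2$ no such cancellation is available.) This is precisely the obstacle you name in your last paragraph, but the flat symmetrization does not overcome it.

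The paper sidesteps the problem by duality: it keeps the full covariant principal part $i(\d_t^B-V^\ga\nab^A_\ga)+\De_g^A$ on the left, proves a direct $\mathsf H^1$ energy estimate \eqref{claim-v} for the \emph{adjoint} equation (where $\De_g^A$ is exactly self-adjoint on $L^2(d\mu)$ and $[\nab^A,\De_g^A]$ produces only curvature terms $\la*\la$), and then dualizes to obtain the $H^{-1}$ bound \eqref{lin-est-low}. An equivalent direct repair of your strategy is to replace the flat weight by the covariant one, working with $\langle(1-\De_g^A)^{-1}\psi_{lin},\psi_{lin}\rangle_{L^2(d\mu)}$; then the principal commutator disappears, and the new error from $\d_t g=2G$ is genuinely lower order and controllable via \eqref{L1t-V} and $\|\la\|_{L^2_t L^\infty}^2$. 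Your claim that ``a covariantly-weighted inner product is unavailable'' is not correct.
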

For clarity, here we note that the linearized equation depends on our gauge choices. The above proposition and its proof below assume we are in the harmonic/Coulomb gauge.

\begin{proof}
Beginning with the $\psi$-equation in \eqref{mdf-Shr-sys-2}, we write the associated linearized equation in the form
    \begin{align*}
        &i(\d^{B}_t-V^{\ga}\nab^A_{\ga})\psi_{lin}+\De^A_g\psi_{lin}\\&=(V^\ga\nab^A_\ga)_{lin}\psi-(\De_g^A)_{lin}\psi+B_{lin}\psi-[i\la_{\si}^{\ga}\Im(\psi\bar{\la}^{\si}_{\ga})]_{lin}
        :=F_{lin},
    \end{align*}
  Here we will treat the source term $F_{lin}$ perturbatively. This allows us to split the proof of \eqref{Diff-est-1} into two parts. Precisely, it suffices to prove the linear bound
    \begin{equation}\label{lin-est-low}
    \|\psi_{lin}\|_{L^\infty_T H^{-1}}\lesssim \|\psi_{lin}(0)\|_{ H^{-1}}+\|F_{lin}\|_{L^1_T  H^{-1}}.  
    \end{equation}
  respectively the source term estimate
  \begin{equation}\label{pert-est-low}
   \|F_{lin}\|_{L^1_T  H^{-1}} \lesssim    \|\psi_{lin}\|_{L^\infty_T H^{-1}}\|\psi\|_{S[0,T]}^2
  \end{equation}
  Together, these two bounds imply the conclusion of the proposition.
  It remains to prove \eqref{lin-est-low} and \eqref{pert-est-low}.

    We first consider the bound \eqref{lin-est-low}, which we prove
    using duality. For this we need the associated adjoint equation,
    which has the form
    \begin{equation}    \label{adEq-v}
    	i(\d_t^B -V^\ga\nab^A_\ga)v+\De_g^A v-i\nab_\ga V^\ga v=\mathcal N,  
    \end{equation}
    The adjoint evolution is considered in the same time interval $[0,T]$, but 
    as a backward Cauchy problem with the initial data at time $T$.
    Then we claim that this evolution is (backward) well-posed in $\mathsf H^1$, and satisfies the the bound
    \begin{align}   \label{claim-v}
        \|v\|_{L^\infty(0,T; \mathsf H^1)}\lesssim \|v(T)\|_{\mathsf H^1}+\|\mathcal N\|_{L^1(0,T; \mathsf H^1)}.
    \end{align}
    Assuming this holds, then from the duality relation
    \[
    \left.\< \psi_{lin},v\> \right|_0^T = \<-i\mathcal N,F_{lin}\>
    \]
    we have the bound
    \begin{align*}
        |\< (\psi_{lin}(T),\psi_{lin}),(v(T),-i\mathcal N)  \>|\lesssim (\|\psi_{lin}(0)\|_{\mathsf H^{-1}}+\|F_{lin}\|_{L^1 \mathsf H^{-1}})(\|v(T)\|_{\mathsf H^1}+\|\mathcal N\|_{L^1\mathsf H^1})
    \end{align*}
    which in turn implies that 
    \[ \|\psi_{lin}\|_{L^\infty_T \mathsf H^{-1}}\lesssim \|\psi_{lin}(0)\|_{\mathsf H^{-1}}+\|F_{lin}\|_{L^1_T \mathsf H^{-1}}.  \]
    Since the metric $g-I_d$ and connection $A$ are small in harmonic/Coulomb gauge, by equivalence \eqref{equ-rlt} and duality we have
    \begin{equation*}
        \|u\|_{\mathsf H^{-1}}=\sup_{\|v\|_{\mathsf H^1}\leq 1}\<u,v\>_{L^2}\lesssim \sup_{\|v\|_{ H^1}\leq 1}\<u,v\>_{L^2}= \| u\|_{H^{-1}}.
    \end{equation*}
    Then the desired bound \eqref{lin-est-low} follows.

    Now we prove the bound \eqref{pert-est-low} for the nonlinear terms $F_{lin}$. This is a consequence of the fixed time bound
    \begin{equation}\label{pert-fixed-time}
    \| F_{lin}\|_{H^{-1}} \lesssim      \|\psi_{lin}\|_{H^{-1}}\| \psi\|_{\mathbf{str}}^2,
    \end{equation}
    which we now prove by successively considering all the terms in $F_{lin}$.
    
    Using Sobolev embeddings and \eqref{linEst} we bound the worst term $(\De_g)_{lin}\psi$ by
    \begin{align*}
        \|(\De_g)_{lin}\psi\|_{H^{-1}}&= \|h^{\al\be}_{lin}\d^2_{\al\be}\psi\|_{H^{-1}}\\
        &\lesssim (\|h_{lin}\|_{L^{\frac{2d}{d-2}}}+\|\d_x h_{lin}\|_{L^2})\| \psi\|_{W^{1,d}}\\
        &\lesssim  \|\la_{lin}\|_{H^{-1}}\| \psi\|_{\mathbf{str}}^2\\
        &\lesssim  \|\psi_{lin}\|_{H^{-1}}\| \psi\|_{\mathbf{str}}^2.
    \end{align*}
   For the term $A^\al_{lin} \d_\al \psi$, by \eqref{linEst} we have
   \begin{align}\label{Adpsi}
       \|A^\al_{lin} \d_\al \psi\|_{H^{-1}}\lesssim \|A_{lin}\|_{L^2}\|\d_x\psi\|_{L^d}
       \lesssim \|\psi_{lin}\|_{H^{-1}}\|\psi\|_{\mathbf{str}}^2.
   \end{align}
    Similarly, by \eqref{linEst} we also have
    \begin{equation} \label{A2psi}
    \begin{aligned}
       &\|(\nab_\al A^\al)_{lin}  \psi\|_{H^{-1}}+\|(A^\al A_\al)_{lin}  \psi\|_{H^{-1}}+\|(\la^3)_{lin} \|_{H^{-1}}\\
       &\lesssim (\|A_{lin}\|_{L^{2}}+\|(A_\al A^\al)_{lin}\|_{L^{2}})\|\psi\|_{\mathbf{str}}+\|\la_{lin}\|_{H^{-1}}\|\la\|_{\mathbf{str}}^2\\
       &\lesssim \|\psi_{lin}\|_{H^{-1}}\|\psi\|_{\mathbf{str}}^2.
   \end{aligned}
   \end{equation}
    and 
    \begin{align*}
       \|B_{lin}  \psi\|_{H^{-1}}\lesssim \|B_{lin}\|_{H^{-1}}(\|\psi\|_{L^\infty}+\|\psi\|_{W^{1,d}})
       \lesssim \|\psi_{lin}\|_{H^{-1}}\|\psi\|_{\mathbf{str}}^2.
   \end{align*}
    For the term $(V^\ga\nab^A_\ga)_{lin}\psi$, by the same argument as \eqref{Adpsi} and \eqref{A2psi} and the estimate \eqref{linEst} we bound it  by
    \begin{align*}
    \|(V^\ga\nab^A_\ga)_{lin}\psi\|_{H^{-1}}&\lesssim \|V_{lin}\d_x\psi\|_{H^{-1}}+\|(VA)_{lin}\psi\|_{H^{-1}}\\
    &\lesssim \|V_{lin}\|_{L^2}\|\d_x\psi\|_{L^d}+\|(VA)_{lin}\|_{L^2}\|\psi\|_{\mathbf{str}}\\
    &\lesssim \|\psi_{lin}\|_{H^{-1}}\|\psi\|_{\mathbf{str}}^2
    \end{align*}
    This concludes the proof of \eqref{pert-fixed-time} and thus of 
    \eqref{pert-est-low}.
    
    \medskip

    Finally, we turn to the proof of the claim \eqref{claim-v}. Since this proof is more complicated than that of \propref{En-prop}, we provide the full details. By \eqref{adEq-v} and integration by parts, we have the basic energy estimate
    \begin{align*}
    	\frac{d}{dt}\|v\|_{\mathsf L^2}^2&= \int 2\Re\<\d^B_t v,v\>+|v|^2 \nab_\al V^\al\ d\mu\\
    	&=\int 2\Re\<(V^\ga \nab^A_\ga v+i\De^A_g v+\nab_\ga V^\ga v-i\mathcal N),v\>+|v|^2\nab_\al V^\al\ d\mu \\
    	&=\int 2\nab_\ga V^\ga |v|^2 -2\Re\<i\mathcal N,v\>\ d\mu\\
    	&\lesssim \|v\|_{\mathsf L^2}^2 \|\nab^\ga V_\ga\|_{L^\infty}+\|\mathcal N\|_{\mathsf L^2}\|v\|_{\mathsf L^2}\\
    	&\lesssim \|v\|_{\mathsf L^2}^2 \|\la\|_{W^{\si_d,r_d}}+\|\mathcal N\|_{\mathsf L^2}\|v\|_{\mathsf L^2}.
    \end{align*}

    We then derive an energy estimate for $\nab^A v$ in $ \mathsf L^2$. By \eqref{adEq-v} and integration by parts we have
    \begin{align*}
        \frac{d}{dt}\int |\nab^A v|^2\ d\mu&=\int 2\Re \<[\d^B_t, \nab^A_\al] v+\nab^A_\al \d^B_t v,\nab^{A,\al}v\>\\
        &+\Re\<\nab^A_\al v,-2G^{\al\be}\nab^A_\be v\>+|\nab^A v|^2\nab_\al V^\ga\ d\mu \\
        &=\int 2\Re \<[\d^B_t, \nab^A_\al] v,\nab^{A,\al}v\>+\Re\<\nab^A_\al v,-2G^{\al\be}\nab^A_\be v\>+|\nab^A v|^2\nab_\al V^\ga\ d\mu \\
        &+\int 2\Re\<\nab^A_\al (V^\ga \nab^A_\ga v+i\De^A_g v+\nab_\ga V^\ga v-i\mathcal N),\nab^{A,\al}v\>\ d\mu\\
        &=\int 2\Re \<[\d^B_t-i\De^A_g, \nab^A_\al] v,\nab^{A,\al}v\>\\
        &+\Re\<\nab^A_\al v,-2G^{\al\be}\nab^A_\be v\>+2|\nab^A v|^2\nab_\al V^\ga\ d\mu \\
        &+\int 2\Re\<V^\ga [\nab^A_\al, \nab^A_\ga]v+\nab_\al V^\ga \nab^A_\ga v+\nab_\al\nab_\ga V^\ga v-i\nab^A_\al\mathcal N),\nab^{A,\al}v\>\ d\mu.
    \end{align*}
This implies
\begin{align*}
	\frac{d}{dt}\int |\nab^A v|^2\ d\mu&\lesssim \|[\d^B_t-i\De^A_g, \nab^A_\al] v\|_{\mathsf L^2}\|\nab^A v\|_{\mathsf L^2}+\|V^\ga[\nab^A_\al,\nab^A_\ga]v\|_{\mathsf L^2}\|\nab^A v\|_{\mathsf L^2}\\
	&+\|v\|_{\mathsf H^1}^2 (\|\la\|_{L^\infty}^2+\|\nab V\|_{L^\infty})+\|\nab_\al\nab_\ga V^\ga\|_{L^d}\|v\|_{L^{2d/(d-2)}}\|v\|_{\mathsf H^1}\\
	&+\|\mathcal N\|_{H^1}\|v\|_{\mathsf H^1}.
\end{align*}
We use \eqref{Cpt-A&B} and \eqref{comm-De} to bound the first term by 
\begin{align*}
	\|[\d^B_t-i\De^A_g, \nab^A_\al] v\|_{\mathsf L^2}\|\nab^A v\|_{\mathsf L^2}&\lesssim \|(\la*\nab^A\psi+\la^2 V)v\|_{L^2}\|v\|_{H^1}\\
	&+\|\nab^A\la*\la*v+\la*\la*\nab v\|_{L^2}\|v\|_{H^1}\\
	&\lesssim (\|\la*\nab^A\la\|_{L^d}+\|\la^2V\|_{L^d}) \|v\|_{L^{2d/(d-2)}}\|v\|_{H^1}\\
	&+\|\la\|_{L^\infty}^2\|v\|_{H^1}^2\\
	&\lesssim \|\la\|_{\mathbf{str}}^2(1+\|\nab V\|_{H^{s-2}})\|v\|_{H^1}
\end{align*}
For the second term we have
\begin{align*}
	\|V^\ga[\nab^A_\al,\nab^A_\ga]v\|_{\mathsf L^2}\|\nab^A v\|_{\mathsf L^2}=\|iV^\ga \Im(\la_{\al\mu}\bar{\la}^\mu_\ga)v\|_{L^2}\|v\|_{H^1}\lesssim \|\la\|_{\mathbf{str}}^2\|\nab V\|_{H^{s-2}}\|v\|_{H^1}.
\end{align*}
For the third and forth terms, by Sobolev embeddings and \eqref{L1t-V} we have
\begin{align*}
	\|v\|_{\mathsf H^1}^2 (\|\la\|_{L^\infty}^2+\|\nab V\|_{L^\infty})+\|\nab_\al\nab_\ga V^\ga\|_{L^d}\|v\|_{L^{2d/(d-2)}}\|v\|_{\mathsf H^1}
	\lesssim \|v\|_{H^1}\|\la\|_{\mathbf{str}}^2.
\end{align*}

Hence, we conclude that
\begin{align*}
	\|v\|_{L^\infty_T \mathsf H^1}&\lesssim \|v_0\|_{\mathsf H^1}+\|\la\|_{L^2_T\mathbf{str}}^2\|v\|_{L^\infty_T \mathsf H^1} +\|\mathcal N\|_{L^1_T\mathsf H^1}\\
	&\lesssim \|v_0\|_{\mathsf H^1}+\|\la\|_{S[0,T]}^2\|v\|_{L^\infty_T \mathsf H^1} +\|\mathcal N\|_{L^1_T\mathsf H^1}
\end{align*}
By the assumption \eqref{prop-Ass1} and H\"older's inequality, this yields the claim \eqref{claim-v}. This completes the proof of the proposition.
\end{proof}

\bigskip

\section{Strichartz estimates}   \label{Sec-Strichartz}

Here we consider the Schr\"odinger equation \eqref{mdf-Shr-sys-2}, and prove the Strichartz bounds in \propref{impbd-1}. First, we introduce the endpoint Strichartz estimates of Keel-Tao \cite{KT} and the inhomogeneous Strichartz estimates developed by \cite{F05, Vilela, Koh, KohSeo}. Then we use these to bound the linear and nonlinear part, respectively.

We begin with the homogeneous Strichartz estimates obtained by Keel-Tao \cite{KT}:
\begin{equation}         \label{KT}
    \| e^{it\De} f\|_{L^q L^r}\lesssim \| f\|_{L^2},
\end{equation}
where $(q,r)$ is Schr\"odinger-admissible pair, that is, $\frac{2}{q}+\frac{d}{r}=\frac{d}{2}$, $2\leq q,\ r\leq \infty$, $(q,r,d)\neq (2,\infty ,2)$. Here we will use the endpoint pair $(q,r)=(2,\frac{2d}{d-2})$.

Next, we state the inhomogenous Strichartz estimates, which summarize several known results, see \cite{F05, Vilela, Koh, KohSeo}. 
\begin{defn}
We say that the pair $(q,r)$ is Schr\"odinger-acceptable if 
\begin{equation*}
    1\leq q<\infty,2\leq r\leq \infty,\quad \frac{1}{q}<\frac{d}{2}(1-\frac{2}{r}),\quad \text{or }(q,r)=(\infty,2).
\end{equation*}
\end{defn}

\begin{thm}[Inhomogeneous Strichartz estimates]
Let $d\geq 3$ and $p'$ be the duality of $p$ with $\frac{1}{p}+\frac{1}{p'}=1$. Assume that the pairs $(q,r)$ and $(\tilde q, \tilde r)$ are Schr\"odinger-acceptable pairs, and satisfy the condition
\begin{equation*}
    \frac{1}{q}+\frac{1}{\tilde q}=\frac{d}{2}(1-\frac{1}{r}-\frac{1}{\tilde r}).
\end{equation*}
In addition, assume one of the following:
\begin{itemize}
    \item [i)] non-sharp case:
        \begin{gather*}
            \frac{1}{q}+\frac{1}{\tilde q}<1,\qquad 
            \frac{d-2}{d}\ \frac{1}{r}\leq \frac{1}{\tilde r}\leq \frac{d}{d-2}\ \frac{1}{r},\qquad \frac{1}{r},\ \frac{1}{\tilde r}\leq \frac{1}{2};
        \end{gather*}
    \item [ii)]sharp case:
        \begin{gather*}
            \frac{1}{q}+\frac{1}{\tilde q}=1,\qquad
            \frac{d-2}{d}\ \frac{1}{r}<\frac{1}{\tilde r}< \frac{d}{d-2}\ \frac{1}{r},\qquad
            \frac{1}{r}\leq \frac{1}{q},\quad \frac{1}{\tilde r}\leq \frac{1}{\tilde q};
        \end{gather*}
    \item [iii)] endpoint cases when $d\geq 3$:
    \begin{gather*}
            \frac{1}{q}+\frac{1}{\tilde q}=1,\qquad
            \frac{1}{r}=\frac{d}{d-2}\ \frac{1}{\tilde r}\quad \text{or}\quad \frac{d-2}{d}\ \frac{1}{\tilde r},\qquad
            \frac{1}{r}\leq \frac{1}{q},\quad \frac{1}{\tilde r}\leq \frac{1}{\tilde q}.
        \end{gather*}
\end{itemize}
Then the following estimate  holds 
\begin{equation}        \label{Strichartz}
    \lVert\int_0^t  e^{i(t-s)\De} F(s)ds \rVert_{L^qL^r}\lesssim \|F\|_{L^{\tilde q '}L^{\tilde r'}}.
\end{equation}
\end{thm}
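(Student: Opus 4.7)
The plan is to follow the established $TT^*$/bilinear interpolation framework, synthesizing the arguments of Foschi, Vilela, Koh and Koh--Seo; this theorem is essentially a consolidation of their results, so the main task is to unify the three regimes under a common statement. By duality, inequality \eqref{Strichartz} is equivalent to the bilinear bound
\begin{equation*}
\left|\iint_{s<t}\langle e^{i(t-s)\Delta}F(s), G(t)\rangle_{L^2_x}\,ds\,dt\right|\lesssim \|F\|_{L^{\tilde q'}L^{\tilde r'}}\|G\|_{L^{q'}L^{r'}}.
\end{equation*}
Setting $\alpha:=\tfrac{d}{2}(1-\tfrac{1}{r}-\tfrac{1}{\tilde r})$, the scaling hypothesis reads $\tfrac{1}{q}+\tfrac{1}{\tilde q}=\alpha$.

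For case (i), I would first remove the retardation condition $s<t$ via the Christ--Kiselev lemma (available since the scaling plus strict inequality $\tfrac{1}{q}+\tfrac{1}{\tilde q}<1$ force $q'\ne \tilde q$), and then combine the basic dispersive estimate $\|e^{it\Delta}\|_{L^1\to L^\infty}\lesssim |t|^{-d/2}$ with bilinear real interpolation against the trivial $L^2\times L^2$ pairing, in order to get a bilinear form estimate with kernel $|t-s|^{-\alpha}$ acting on functions in $L^{\tilde r'}$ and $L^{r'}$ respectively. Integration in the time variables via Hardy--Littlewood--Sobolev then yields the bound; the dispersive interpolation conditions $\tfrac{d-2}{d}\cdot\tfrac{1}{r}\le \tfrac{1}{\tilde r}\le \tfrac{d}{d-2}\cdot\tfrac{1}{r}$ guarantee the bilinear interpolation step is valid with $\alpha\ge 0$, while the strict inequality $\alpha<1$ is precisely the range where HLS applies.

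The main obstacle is the sharp/endpoint regime: cases (ii) and (iii) both lie at $\alpha=1$, where HLS fails and Christ--Kiselev is no longer freely available. Here I would follow the refinements of Foschi and Vilela, replacing $L^q$ by its Lorentz predual $L^{q,2}$ and running a Keel--Tao style dyadic decomposition in $|t-s|$ together with the atomic/Schur test argument; the extra hypotheses $\tfrac{1}{r}\le \tfrac{1}{q}$ and $\tfrac{1}{\tilde r}\le \tfrac{1}{\tilde q}$ are precisely what is required for the Lorentz exponents to interpolate back to $L^q,L^{\tilde q}$ without loss. Finally, for case (iii), where the dispersive interpolation itself is at its boundary (one of $\tfrac{1}{r}=\tfrac{d}{d-2}\tfrac{1}{\tilde r}$ or $\tfrac{d-2}{d}\tfrac{1}{\tilde r}$), the Lorentz approach of Vilela no longer applies and I would invoke the Koh--Seo refinement directly, which uses a more careful bilinear analysis exploiting the oscillatory structure of the Schr\"odinger kernel at the dispersive endpoint; this is the most delicate part of the theorem and the only step that requires genuinely new input beyond the non-sharp case.
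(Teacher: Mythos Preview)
The paper does not give a proof of this theorem at all: it is stated as a summary of known results, with only the attribution ``see \cite{F05, Vilela, Koh, KohSeo}'' and no argument. So there is nothing to compare your proposal against.

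Your sketch is a reasonable outline of how the cited references proceed, and the attribution of each case to the right set of ideas (Christ--Kiselev plus HLS for the non-sharp case, Keel--Tao dyadic/Lorentz machinery for the sharp case, and the Koh--Seo refinement at the dispersive endpoint) is accurate. One small correction: in case (i) your justification for Christ--Kiselev is slightly garbled---the relevant inequality is $\tilde q' < q$, which is exactly $\tfrac{1}{q}+\tfrac{1}{\tilde q}<1$, not merely $q'\ne \tilde q$. Also, the characterization of case (iii) as requiring ``genuinely new input beyond the non-sharp case'' is a bit overstated; the Koh--Seo argument is still fundamentally a refinement of the Keel--Tao bilinear decomposition, not a different mechanism. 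But since the paper treats this as a black box, any self-contained proof you supply here goes beyond what the paper itself does.
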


\bigskip 

We now aim to prove  the space-time bound $S[0,T]$ for $\psi$
in Proposition~\ref{impbd-1} by combining the above Strichartz estimates with the elliptic estimates in section \ref{Ellip-sec}.

\begin{proof}[Proof of \propref{impbd-1}]
	By Duhamel's principle, the solution $\psi$ of \eqref{mdf-Shr-sys-2} can be expressed by
	\begin{align*}
	\psi(t)=e^{it\De}\psi_0+\int_0^t e^{i(t-s)\De} \mathcal N(s) ds,
	\end{align*}
	where 
	\begin{align*}
	    \mathcal N:=h\d_x^2\psi+(V+A)\d_x\psi+(B+A^2+VA+\la^2)\psi.
	\end{align*}
	Using Sobolev embeddings, the bound \eqref{KT} with $(q,r)=(2,\frac{2d}{d-2})$ 
	and the estimates \eqref{Strichartz} with pairs $(q,r)=(2,r_d)$, $ (\tilde q,\tilde r)=(2,\frac{2(d-1)}{(d-2)})$, we have
	\begin{align*}
	\|\psi\|_{L^2_T W^{\si_d,r_d}}
	\lesssim \|\psi_0\|_{H^{\si_d+\frac{d-2}{2(d-1)}}}+\| \mathcal N\|_{L^2_T W^{\si_d,\tilde r'}}.
	\end{align*}
	
	It remains to successively estimate the  terms in $\mathcal N$.
	For the first term $h\d_x^2 \psi$, by \eqref{L2t-SS} and \eqref{ell-bd} we have
	\begin{align*}
	\| h\d_x^2\psi\|_{L^2_T W^{\si_d,\tilde r'}}\lesssim  \| h\|_{L_T^2 L^{2(d-1)}}\|\psi\|_{L_T^\infty H^{\si_d+2}}+\|\nab^2 h\|_{L^\infty_T H^{\si_d+\ep}}\|\psi\|_{L^2_T L^{2(d-1)}}\lesssim \ep_0 \|\psi\|_{L^2_T W^{\si_d,r_d}}.
	\end{align*}
	For the second term $(V+A)\d_x \psi$, by \eqref{L2t-SS} and \eqref{ell-bd} we have
	\begin{align*}
	\| (V+A)\d_x \psi\|_{L^2_T W^{\si_d,\tilde r'}}\lesssim  (\| \d_x V\|_{L_T^2H^{s_d}}+\| \d_x A\|_{L_T^2H^{s_d}})\|\psi\|_{L_T^\infty H^{s_d}}
	\lesssim \ep_0 \|\psi\|_{L^2_T W^{\si_d,r_d}}.
	\end{align*}
	Finally, for the other terms, by \eqref{L2t-la} we also have
	\begin{align*}
	&\|(B+A^2+VA+\la^2)\psi\|_{L^2_T W^{\si_d,\tilde r'}}\\
	&\lesssim   (\|\d_x B\|_{L_T^2H^{s_d-1}}+\| \d_x A\|_{L_T^2H^{s_d}}+\|\d_x V\|_{L^2_T H^{s_d}})(1+\|\SS\|_{L_t^\infty \HH^{s_d}})\|\psi\|_{L_T^\infty H^{s_d}}\\
	&\quad +\|\la\|_{L^2_T W^{\si_d,r_d}}\|\la\|_{L_T^\infty H^{s_d}}\|\psi\|_{L_T^\infty H^{s_d}}\\
	&\lesssim \ep_0 \|\psi\|_{L^2_T W^{\si_d,r_d}}.
	\end{align*}
	This concludes the proof of the bound \eqref{psi-L2t} for $\psi\in L^2_T W^{\si_d,r_d}$.
	
	In order to obtain the bound for $\psi\in L^2_TW^{1,4}$, by \eqref{KT} and \eqref{Strichartz} we have
	\begin{align*}
	\|\psi\|_{L^2_T W^{1,4}}
	\lesssim \|\psi_0\|_{H^{1}}+\| \mathcal N\|_{L^2_T W^{1,\frac{4}{3}}}.
	\end{align*}
	Using \eqref{L2t-hW14}, \eqref{L2t-SS}, \eqref{L2t-laW14} and \eqref{prop-Ass1} we bound the nonlinear terms by 
	\begin{align*}
	\| \mathcal N\|_{L^2_T W^{1,4}}&\lesssim  \| h\|_{L_T^2 W^{1,4}}\|\psi\|_{L_T^\infty H^{3}}\\
	&\quad +(\|\d_x B\|_{L_T^2L^2}+\| \d_x A\|_{L_T^2L^2}+\|\d_x V\|_{L^2_T L^2})(1+\|\SS\|_{L_t^\infty \HH^{s_d}})\|\psi\|_{L_T^\infty H^{2}}\\
	&\quad +\|\la\|_{L^2_T W^{1,4}}\|\la\|_{L_T^\infty H^{s_d}}\|\psi\|_{L_T^\infty H^{1}}\\
	&\lesssim (C\|\psi_0\|_{H^{3}})^2 \|\psi\|_{L^2_TW^{1,4}}.
	\end{align*}
	These imply the bound \eqref{psi-L2t} for $\psi\in L^2_T W^{1,4}$ in dimension$d=4$.
\end{proof}

\bigskip

\section{Rough solutions and scattering}   \label{sec-proof}
In this section, we use elliptic estimates in section \ref{Ellip-sec}, energy estimates \eqref{E-psi}, \eqref{Diff-est-1} and Strichartz estimates in \propref{impbd-1} to prove the improved energy bounds \eqref{prop-result} for $\psi$ in fractional Sobolev spaces. This closes the proof of \propref{bootstrap-prop}. As a byproduct, we also obtain the scattering property \eqref{Scatter}.

Here we start with an equivalent definition of $H^s$. Since in the Hilbertian case all interpolation methods yield the same result, for the $H^s$ norm we will use a characterization which is akin to a Littlewood-Paley decomposition, or to a discretization of the $J$ method of interpolation. Precisely, we
have
\begin{lemma}
    Let $0\leq s\leq N$. Then $H^s$ can be defined as the space of distributions $u$ which admit a representation
\[  u=\sum_{j=0}^\infty u_j  \]
with the property that the following norm is finite:
\[ \tri (u_j)\tri^2_s =\sum_{j=0}^\infty 2^{2j(s+1)} \|u_j\|_{H^{-1}}^2 +2^{2j(s-N)}\|u_j\|_{H^N}^2    \]
and with equivalent norm defined as
\begin{equation}\label{Eq-Hs}
    \|u\|_{H^s}^2=\inf \tri (u_j)\tri ^2_s,
\end{equation}
where the infimum is taken with respect to all representations as above.
\end{lemma}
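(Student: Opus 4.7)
The plan is to recognize \eqref{Eq-Hs} as a discrete $J$-method characterization of the real interpolation space $(H^{-1},H^N)_{\theta,2}$ with $\theta=(s+1)/(N+1)$, which coincides with $H^s$ by Plancherel. Rather than invoke abstract interpolation, I would prove the two-sided bound directly using the Littlewood--Paley projectors $P_k$ introduced in Section~\ref{Sec3}, where $\|u\|_{H^s}^2\approx \sum_{k\ge 0} 2^{2ks}\|P_k u\|_{L^2}^2$ for $s\ge 0$.

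\emph{Lower bound} $\|u\|_{H^s}\le \inf \tri(u_j)\tri_s$. Given any admissible decomposition $u=\sum_{j\ge 0}u_j$, set $a_j=2^{j(s+1)}\|u_j\|_{H^{-1}}$ and $b_j=2^{j(s-N)}\|u_j\|_{H^N}$, so $\tri(u_j)\tri_s^2=\sum_j(a_j^2+b_j^2)$. Bernstein/Plancherel give the two elementary bounds
\[
\|P_k u_j\|_{L^2}\le 2^k\|u_j\|_{H^{-1}}=2^{-j s}\,2^{(k-j)}\,2^{-j}a_j,\qquad \|P_k u_j\|_{L^2}\le 2^{-kN}\|u_j\|_{H^N}=2^{-j s}\,2^{-(k-j)(N-s)}\cdot 2^{j s}\cdot 2^{-j s}b_j,
\]
so that $2^{ks}\|P_k u_j\|_{L^2}\le \min\bigl(2^{(k-j)(s+1)}a_j,\;2^{-(k-j)(N-s)}b_j\bigr)$. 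Summing in $j$ and using that the kernel $K(n)=\min(2^{n(s+1)},2^{-n(N-s)})$ is in $\ell^1(\mathbb Z)$ uniformly (since $s+1>0$ and $N-s\ge 0$), Young's inequality yields $\bigl\|2^{ks}\|P_k u\|_{L^2}\bigr\|_{\ell^2_k}\lesssim \|(a_j,b_j)\|_{\ell^2_j}=\tri(u_j)\tri_s$, i.e.\ $\|u\|_{H^s}\lesssim \tri(u_j)\tri_s$.

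\emph{Upper bound} $\inf\tri(u_j)\tri_s\lesssim \|u\|_{H^s}$. Here I would simply take the canonical Littlewood--Paley decomposition $u_j:=S_j u$ from Section~\ref{Sec3} (with $u_0$ absorbing all frequencies $\lesssim 1$). Then $\|u_j\|_{H^{-1}}\lesssim 2^{-j}\|u_j\|_{L^2}$ and $\|u_j\|_{H^N}\lesssim 2^{jN}\|u_j\|_{L^2}$, so
\[
2^{j(s+1)}\|u_j\|_{H^{-1}}+2^{j(s-N)}\|u_j\|_{H^N}\lesssim 2^{j s}\|u_j\|_{L^2},
\]
and squaring and summing gives $\tri(u_j)\tri_s^2\lesssim \sum_j 2^{2j s}\|P_j u\|_{L^2}^2\approx \|u\|_{H^s}^2$.

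The only mild subtlety is the endpoint $j=0$, where $u_0=S_0 u$ contains all low frequencies and the negative-order norm $\|u_0\|_{H^{-1}}$ must be controlled by $\|u_0\|_{L^2}\le \|u\|_{H^s}$; this is immediate from $\|u_0\|_{H^{-1}}\le \|u_0\|_{L^2}$. No genuine obstacle arises; the main technical point is just the convolution bound in the lower-bound direction, which works precisely because the range $0\le s\le N$ forces both exponents $s+1$ and $N-s$ to be nonnegative, making $K\in\ell^1(\mathbb Z)$.
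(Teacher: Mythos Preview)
The paper does not actually supply a proof of this lemma; it is stated as a standard consequence of real interpolation (the discrete $J$-method) together with the Hilbertian identification $(H^{-1},H^N)_{\theta,2}=H^s$ for $\theta=(s+1)/(N+1)$. Your direct Littlewood--Paley argument is a clean, self-contained substitute and is essentially how one proves the $J$-method characterization by hand in this setting. The upper bound via $u_j=S_ju$ is correct as written, and the lower bound via the convolution kernel $K(n)=\min(2^{n(s+1)},2^{-n(N-s)})$ together with Young's inequality is the right mechanism.

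One genuine caveat: your parenthetical ``since $s+1>0$ and $N-s\ge 0$'' is not enough to put $K$ in $\ell^1(\mathbb Z)$. At the endpoint $s=N$ one has $K(n)=1$ for all $n\ge 0$, so $K\notin\ell^1$ and the Young step fails. In fact the norm equivalence itself breaks at $s=N$: fix a unit-$L^2$ bump $\phi$ at frequency $2^M$, set $u_j=M^{-1/2}\phi$ for $1\le j\le M$ and $u_j=0$ otherwise, so that $u=M^{1/2}\phi$; then $\|u\|_{H^N}^2\approx M\,2^{2MN}$ while $\tri(u_j)\tri_N^2\approx 2^{2MN}$, and the ratio blows up with $M$. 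So the lemma should really read $0\le s<N$, which is consistent with the standard restriction $0<\theta<1$ in the $J$-method. This is harmless for the paper's application, where one always takes $N$ a large integer strictly above $s_d$.

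(A minor remark: the intermediate algebraic rewritings in your displayed line for $\|P_k u_j\|_{L^2}$ contain typos, but the final bound $2^{ks}\|P_ku_j\|_{L^2}\le\min(2^{(k-j)(s+1)}a_j,\,2^{-(k-j)(N-s)}b_j)$ is correct.)
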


\subsection{Regularized data}
Consider an initial data $\psi_0\in H^{s_d}$ small, and let $\{c_k\}_{k\geq 0}$ be a sharp frequency
envelope for $\psi_0$ in $H^{s_d}$. For $\psi_0$ we consider a family of regularizations at frequencies $\lesssim 2^k$, i.e.
\[\psi^{(k)}_0:=S_{\leq k} \psi_0\ \in H^\infty :=\cap_{j=0}^\infty H^j,\]
where $k$ is a dyadic frequency parameter. This parameter can be taken either discrete or continuous,
depending on whether we have access to difference bounds or only to the linearized equation. Suppose we
work with differences. Then the family $\psi^{(k)}_0$ can be taken to have similar properties to Littlewood-Paley
truncations:
\begin{itemize}
    \item [i)] Uniform bounds:
    \begin{equation*}
        \|S_j \psi^{(k)}_0\|_{H^{s_d}}\lesssim c_j.
    \end{equation*}
    
    \item [ii)] High frequency bounds: for $\si>\de$,
    \begin{equation}      \label{high-ini}
        \| \psi^{(k)}_0\|_{H^{s_d+\si}}\lesssim 2^{\si k}c_k.
    \end{equation}
    
    \item [iii)] Difference bounds: 
    \begin{equation}      \label{Diff-L2ini}
        \| \psi^{(k+1)}_0-\psi^{(k)}_0\|_{H^{-1}}\lesssim 2^{-(s_d+1) k}c_k.
    \end{equation}
    
    \item [iv)] Limit as $k\rightarrow \infty$: 
    \begin{equation*}
        \psi_0=\lim_{k\rightarrow\infty} \psi_0^{(k)}\quad \text{in } H^{s_d}.
    \end{equation*}
\end{itemize}
Correspondingly, we obtain a family of smooth solutions $\psi^{(k)}$.

\subsection{Uniform bounds}
Corresponding to the above family of regularized data, we obtain a family of smooth
solutions $\psi^{(k)}$ on $[0,T_n]$ for $T_n>1$ by Theorem \ref{small-MSS-LWP}. For this we can use the energy estimates \eqref{E-psi} to propagate Sobolev regularity for solutions as well as difference bounds as in Proposition \ref{Diff-est}. 
Using induction we will prove that the solutions $\psi^{(k)}$ are global as follows:

(i) We prove that the solution $\psi^{(0)}$ is global. By local well-posedness in \thmref{small-MSS-LWP}, let $T_0$ be
\[ T_0=\sup_{T} \Big\{T: \|\psi^{(0)}\|_{S[0,T]}+\|\psi^{(0)}\|_{L^\infty_T H^{s_d}}\leq C_0 \|\psi_0\|_{H^{s_d}}     \Big\}.         \]
Then on the interval $[0,T_0]$, by \propref{impbd-1} we have
\[\|\psi^{(0)}\|_{S[0,T]}\leq 2C_2\|\psi^{(0)}_0\|_{H^{s_d}}\leq 2C_2\|\psi_0\|_{H^{s_d}} .\]
Using \eqref{Diff-est-1} and \eqref{E-psi} we have
\begin{align*}
    \|\psi^{(0)}\|_{L^\infty_{T_0} H^{s_d}}&\leq \|\psi^{(0)}\|_{L^\infty_{T_0}H^{-1}}+\|\psi^{(0)}\|_{L^\infty_{T_0}H^N}\\
    &\leq (1-C_{lin}C_0^2\ep_0^2)^{-1}C_{lin}\|\psi_0\|_{H^{-1}} +C_1 e^{C_E C_0^2 \ep_0^2 }\|\psi_0\|_{H^{s_d}}\\
    &\leq \frac{C_0}{2}\|\psi_0\|_{H^{s_d}}.
\end{align*}
Here we set 
\begin{equation}  \label{C0}
    C_0> 4C_1+4C_2+4C_{lin},
\end{equation}
and choose $\ep_0$ to be sufficiently small such that 
\begin{equation}   \label{ep0}
    (C_E+C_{lin})C_0^2\ep_0^2 <\frac{1}{4}.
\end{equation}  
This implies that the solution can be extended, and thus the lifespan is $T_0=\infty$.

(ii) We prove that the the solutions $\psi^{(k)}$ for any $k$ are global. By local well-posedness in \thmref{small-MSS-LWP}, let $T_k$ be
\[ T_k=\sup_{T} \Big\{T: \|\psi^{(k)}\|_{S[0,T]}+\|\psi^{(k)}\|_{L^\infty_T H^{s_d}}\leq C_0 \|\psi_{0}\|_{H^{s_d}}     \Big\}.         \]
Then on the interval $[0,T_k]$, by \propref{impbd-1} we have the improved Strichartz estimates in $S[0,T]$. We then prove the improved energy estimates for $\psi^{(k)}$.

By (i) we assume that $\psi^{(l)}$ for $l\leq k-1$ are global. Then we have two properties as follow:
\begin{itemize}
    \item [a)] High frequency bounds:
    \begin{equation}    \label{Hi-bd}
        \|\psi^{(l)}\|_{C[0,T_k;H^{N_1}]}\leq C_1 e^{C_EC_0\ep_0} 2^{(N_1-s_d) l}c_l,\quad     0\leq l\leq k,\ s_d<N_1\in \N.
    \end{equation}
    
    \item [b)] Difference bounds:
    \begin{equation}\label{Diff-bd}
        \|\psi^{(l+1)}-\psi^{(l)}\|_{C[0,T_k;H^{-1}]}\leq 2C_{lin}2^{-(s_d+1) l}c_l,\quad     0\leq l\leq k-1.
    \end{equation}
\end{itemize}
The first bound is obtained from \eqref{E-psi} and \eqref{high-ini}. The second bound \eqref{Diff-bd} is obtained by \propref{Diff-est}, \propref{impbd-1} and \eqref{Diff-L2ini}. Indeed, 
\begin{align*}
    &\|\psi^{(l+1)}-\psi^{(l)}\|_{C[0,T_k;H^{-1}]}\\
    &\leq C_{lin}\|\psi^{(l+1)}_0-\psi^{(l)}_0\|_{H^{-1}}
    +C_{lin}\| (\psi^{(l+1)},\psi^{(l)})\|^2_{S[0,T_k]}\|\psi^{(l+1)}-\psi^{(l)}\|_{C[0,T_k;H^{-1}]}\\
    &\leq C_{lin} 2^{-(s+1)l}c_l+2C_{lin}(C_0 \ep_0)^2 \|\psi^{(l+1)}-\psi^{(l)}\|_{C[0,T_k;H^{-1}]}.
\end{align*}

Interpolating the two estimates \eqref{Hi-bd} and \eqref{Diff-bd}, we obtain
\begin{equation}       \label{EN-est}
   \|\psi^{(l+1)}-\psi^{(l)}\|_{C[0,T_k;H^N]}\leq \max\{ 2C_{lin},C_1 e^{C_EC_0^2\ep_0^2}  \} 2^{-(s_d-N)l}c_l, \quad   0<N<N_1.
\end{equation}
We use these bounds to establish uniform frequency envelope bounds for $\psi^{(k)}$,
\begin{align*}
    \|\psi^{(k)}\|_{C[0,T_k;H^{s_d}]}^2&\leq  \|\psi^{(0)}\|_{C[0,T_k;H^{-1}]}^2+ \sum_{l=1}^{k-1} 2^{2(s_d+1)l} \|\psi^{(l+1)}-\psi^{(l)}\|_{C[0,T_k;H^{-1}]} ^2\\&
    \quad + \|\psi^{(0)}\|_{C[0,T_k;H^N]}^2+ \sum_{l=1}^{k-1} 2^{2l(s_d-N)} \|\psi^{(l+1)}-\psi^{(l)}\|_{C[0,T_k;H^N]} ^2\\&
    \leq \sum_{l=0}^{k-1} (2C_{lin}c_l)^2 +\sum_{l=0}^{k-1} (\max\{ 2C_{lin},C_1 e^{C_EC_0^2\ep_0^2}  \})^2 c_l^2\\
    &\leq \sum_{l=0}^{k-1} \Big(\frac{C_0}{2} c_l\Big)^2\leq \Big(\frac{C_0}{2}\|\psi_{0}\|_{H^{s_d}}\Big)^2.
\end{align*}
Here $C_0$ and $\ep_0$ are chose as \eqref{C0}, \eqref{ep0} respectively.
This implies that the solutions $\psi^{(k)}$ are also global.

\medskip 

Now consider the convergence of solutions $\psi^{(k)}$ in $C(\R;H^{s_d})$ as $k\rightarrow\infty$. From the difference bounds \eqref{Diff-bd} we obtain convergence in $H^{-1}$ to a limit $\psi\in C[0,\infty;H^{-1}]$, with
\begin{align*}
    \|\psi-\psi^{(k)}\|_{C(\R;H^{-1})}\leq \sum_{l=k}^\infty\|\psi^{(l+1)}-\psi^{(l)}\|_{C(\R;H^{-1})}\leq \sum_{l=k}^\infty 2^{-(s_d+1)l}c_l\lesssim 2^{-(s+1)k}.
\end{align*}
On the other hand, expanding the difference as a telescopic sum, where, in view of the above bounds \eqref{Hi-bd} and \eqref{Diff-bd}, each summand is essentially concentrated at frequency $2^l$, with $H^{s_d}$ size $c_l$ and exponentially decreasing tails. By the equivalent norm \eqref{Eq-Hs}, \eqref{Diff-bd} and \eqref{EN-est} we have
\begin{align*}
    \|\psi-\psi^{(k)}\|_{C(\R;H^{s_d})}^2&\leq \sum_{l=k}^\infty 2^{2(s_d+1)l}\|\psi^{(l+1)}-\psi^{(l)}\|_{C(\R;H^{-1})}^2\\
    &\quad + \sum_{l=k}^\infty2^{2(s_d-N)l}\|\psi^{(l+1)}-\psi^{(l)}\|_{C(\R;H^N)}^2\\&
    \lesssim \sum_{l=k}^\infty c_l^2=c_{\geq k},
\end{align*}
so we also have convergence in $C(\R,H^{s_d})$. 

Hence, we obtain the solution $\psi$ as the limit of solutions $\psi^{(k)}$, and have the bound
\[\|\psi\|_{C(\R;H^{s_d})}\leq \lim_{k\rightarrow\infty} \|\psi^{(k)}\|_{C(\R;H^{s_d})}\leq \frac{C_0}{2}\|\psi_0\|_{H^{s_d}}.\]
This gives the improved energy bound in \propref{bootstrap-prop}. The first improved bound in \eqref{prop-result} for $\psi\in S[0,T]$ is obtained by \propref{impbd-1}, \eqref{C0} and \eqref{ep0}. Hence, we complete the proof of \propref{bootstrap-prop}.

\bigskip

Finally, we prove that scattering holds.
\begin{prop}[Scattering]
	Let $s_d$ be as in \eqref{Reg-index}. There exist $\psi_{\pm}\in H^{s_d-2}$ such that
	\begin{equation} \label{Scatter-re}
	\lim_{t\rightarrow\pm\infty} \|\psi-e^{it\De}\psi_{\pm}\|_{H^{s_d-2}}=0.
	\end{equation}
\end{prop}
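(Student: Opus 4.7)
The plan is to reinterpret scattering as the convergence of the profile $\phi(t) := e^{-it\Delta}\psi(t)$ in $H^{s_d-2}$ as $t\to\pm\infty$, and then to read off the required bounds from the estimates already established in Sections \ref{Ellip-sec}--\ref{Sec-Strichartz}. By time reversal symmetry I only treat $t\to+\infty$; the case $t\to-\infty$ is identical. From the Duhamel representation
\begin{equation*}
    \psi(t) = e^{it\Delta}\psi_0 + \int_0^t e^{i(t-s)\Delta}\mathcal N(s)\,ds,
\end{equation*}
with $\mathcal N$ as in the proof of Proposition \ref{impbd-1}, one has $\phi(t)=\psi_0+\int_0^t e^{-is\Delta}\mathcal N(s)\,ds$. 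Thus it suffices to show that $t\mapsto\phi(t)$ is Cauchy in $H^{s_d-2}$, since then the limit
\begin{equation*}
    \psi_+ := \psi_0 + \int_0^\infty e^{-is\Delta}\mathcal N(s)\,ds \in H^{s_d-2}
\end{equation*}
satisfies $\|\psi(t)-e^{it\Delta}\psi_+\|_{H^{s_d-2}} = \|\phi(t)-\psi_+\|_{H^{s_d-2}} \to 0$.

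To control $\phi(t_2)-\phi(t_1) = \int_{t_1}^{t_2} e^{-is\Delta}\mathcal N(s)\,ds$ in $H^{s_d-2}$, I would use the dual of the Keel--Tao endpoint Strichartz estimate \eqref{KT}, applied at the regularity level $s_d-2$: for the admissible pair $(\tilde q,\tilde r)=(2,\frac{2(d-1)}{d-2})$ (the same pair used in Section \ref{Sec-Strichartz}),
\begin{equation*}
    \Bigl\| \int_{t_1}^{t_2} e^{-is\Delta}\mathcal N(s)\,ds \Bigr\|_{H^{s_d-2}} \lesssim \|\mathcal N\|_{L^2([t_1,t_2];\, W^{s_d-2,\tilde r'})}.
\end{equation*}
The point is now that the global bound on $\mathcal N$ in the dual Strichartz norm on all of $\R_+$ is exactly what was proved term by term in the course of establishing Proposition \ref{impbd-1}: combining the fixed-time elliptic bounds of Theorem \ref{t:ell-fixed-time} with the space-time elliptic estimates \eqref{L2t-SS}, \eqref{L1t-V} and the now-closed global bootstrap from Proposition \ref{bootstrap-prop}, each of the contributions $h\partial^2\psi$, $(V+A)\partial\psi$, $(B+A^2+VA+\lambda^2)\psi$ is bounded in $L^2(\R_+;W^{s_d-2,\tilde r'})$ by $\ep_0\|\psi\|_{S(\R)} \lesssim \ep_0^2$. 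Consequently $\mathcal N\in L^2(\R_+;W^{s_d-2,\tilde r'})$.

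With this global integrability in hand, the Cauchy property is immediate from dominated convergence: $\|\mathcal N\|_{L^2([t_1,t_2];W^{s_d-2,\tilde r'})} \to 0$ as $t_1,t_2\to\infty$. This yields the existence of $\psi_+\in H^{s_d-2}$ and the scattering limit \eqref{Scatter-re}. The only step that requires some care is verifying that every piece of $\mathcal N$ does in fact lie in the dual-admissible Strichartz space at the reduced regularity $s_d-2$; but this is precisely the computation already carried out in the proof of Proposition \ref{impbd-1}, with the only modification being a careful tracking of the number of derivatives so as not to exceed the $H^{s_d}$ budget for $\psi$ and the $\mathcal H^{s_d}$ budget for $\mathcal S$. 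No additional nonlinear analysis beyond what appears in Sections \ref{Ellip-sec}--\ref{Sec-Strichartz} is required, so I do not expect a genuine obstacle here.
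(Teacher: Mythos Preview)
Your proposal is essentially the same approach as the paper's (very terse) proof: both reduce scattering to the global-in-time integrability of the nonlinearity $\mathcal N$ in the dual Strichartz space already established in Proposition~\ref{impbd-1}, and you have correctly spelled out the ``standard'' Cauchy-sequence argument for the profile $\phi(t)=e^{-it\Delta}\psi(t)$ that the paper leaves implicit.

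One small technical slip to be aware of: the pair $(\tilde q,\tilde r)=(2,\tfrac{2(d-1)}{d-2})$ is Schr\"odinger-\emph{acceptable} but not \emph{admissible} (check: $\tfrac{2}{\tilde q}+\tfrac{d}{\tilde r}\neq\tfrac{d}{2}$), so the dual Keel--Tao estimate you invoke does not apply to it directly. This is not a genuine obstacle---you can either re-run the H\"older estimates from the proof of Proposition~\ref{impbd-1} with exponents summing to the genuinely admissible dual $\tfrac{2d}{d+2}$, or simply use the Sobolev embedding $W^{s_d-2,\tilde r'}\hookrightarrow H^{s_d-2-\frac{d}{2(d-1)}}$ (losing a harmless fraction of a derivative, still within the $H^{s_d}$ budget). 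The paper's own one-line proof is equally casual about this point, so your write-up is at the same level of rigor.
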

\begin{proof}
	It is standard to deduce \eqref{Scatter-re} from
	\[  \|(V_{\alpha}-2A_{\alpha})\nabla^{\alpha}\psi\|_{L_t^2W^{s_d-2,r_d}}+ \|\mathcal{N}\|_{L_t^2W^{s_d-2,r_d}}\lesssim 1,   \]
	which has been proved in Proposition \ref{impbd-1}. So our lemma follows.
\end{proof}

\bigskip

\section*{Acknowledgments}
J. Huang was partially supported by China Postdoctoral Science Foundation Grant 2021M690223. Z. Li was supported by  NSF-China Grant-1200010237.
D. Tataru was supported by NSF grant DMS-2054975 as well as by a Simons Investigator grant from the Simons
Foundation.

\end{document}